\DeclareMathAlphabet{\mathcal}{OMS}{cmsy}{m}{n} 
\definecolor{DarkPurple}{rgb}{0.40,0.0,0.20}
\newcommand{\spann}{\textrm{span}}
\newcommand{\coker}{\operatorname{coker}}
\newcommand{\G}{\mathcal{G}}
\newcommand{\ZZ}{\mathbb{Z}}
\newcommand{\NN}{\mathbb{N}}
\DeclareMathOperator{\id}{id}
\DeclareMathOperator{\im}{im}
\newtheorem{lemma}{Lemma}[section]
\newtheorem{corollary}[lemma]{Corollary}
\newtheorem{theorem}[lemma]{Theorem}
\newtheorem*{matuiAH*}{Matui's AH~Conjecture} 
\newtheorem{proposition}[lemma]{Proposition}
\newtheorem{assumption}[lemma]{Assumption}
\theoremstyle{definition}
\newtheorem{definition}[lemma]{Definition}
\newtheorem{example}[lemma]{Example}
\newtheorem{remark}[lemma]{Remark}
\title[Katsura--Exel--Pardo Groupoids and the AH~Conjecture]{Katsura--Exel--Pardo Groupoids and the AH~Conjecture}
\date{\today}
\author[1]{Petter Nyland}
\author[2]{Eduard Ortega}
\address{Department of Mathematical Sciences, Faculty of Information Technology and Electrical Engineering, NTNU -- Norwegian University of Science and Technology, Trondheim, Norway}
\email{petter.nyland@ntnu.no}
\address{Department of Mathematical Sciences, Faculty of Information Technology and Electrical Engineering, NTNU -- Norwegian University of Science and Technology, Trondheim, Norway}
\email{eduard.ortega@ntnu.no}
\keywords{Ample groupoid, homology of étale groupoids, topological full group, self-similar graph, Katsura--Exel--Pardo~groupoid, Katsura algebra, AH~conjecture, Property~TR}
\numberwithin{equation}{section} 
\begin{document}

\begin{abstract}
It is proven that Matui's AH~conjecture is true for Katsura--Exel--Pardo groupoids $\mathcal{G}_{A,B}$ associated to integral matrices $A$ and $B$. This conjecture relates the topological full group of an ample groupoid with the homology groups of the groupoid. We also give a criterion under which the topological full group $\left\llbracket \mathcal{G}_{A,B} \right\rrbracket$ is finitely generated.
\end{abstract}

\maketitle

\section{Introduction} 

The \emph{AH~conjecture} is one of two conjectures formulated by Matui in~\cite{MatProd} concerning certain ample groupoids over Cantor spaces. This conjecture predicts that the abelianization of the topological full group of such a groupoid together with its first two homology groups fit together in an exact sequence as follows: \[\begin{tikzcd}
H_0(\mathcal{G}) \otimes \mathbb{Z}_2 \arrow{r}{j} & \llbracket \mathcal{G} \rrbracket_{\text{ab}} \arrow{r}{I_\text{ab}} & H_1(\mathcal{G}) \arrow{r} & 0.
\end{tikzcd}  \]

So far, the AH~conjecture has been confirmed in a number of cases. For instance, it holds for groupoids which 
 are both almost finite and principal~\cite{MatHom}. This includes AF-groupoids, transformation groupoids of higher-dimensional Cantor minimal systems and groupoids associated to aperiodic quasicrystals (as described in~\cite[Subsection~6.3]{Nek19}). At the opposite end of the spectrum, the AH~conjecture is also true for (products of) SFT-groupoids~\cite{MatProd}. The same goes for transformation groupoids associated to odometers~\cite{Scarp}, which incidentally, provided counterexamples to the other conjecture from~\cite{MatProd}, namely the \emph{HK~conjecture}. In the recent paper~\cite{NO20}, we showed that the AH~conjecture holds for graph groupoids of infinite graphs, complementing Matui's result in the finite case~\cite{MatTFG}.

The present paper may be viewed as a follow-up to~\cite{NO20}. Here we investigate the validity of the AH~conjecture for a class of groupoids known as \emph{Katsura--Exel--Pardo groupoids}. These groupoids are built from two equal-sized row-finite integer matrices~$A$ and~$B$, where $A$ has no negative entries, and are denoted $\mathcal{G}_{A,B}$. Their origins stem from Katsura's paper~\cite{Kat08}, in which he constructed $C^*$-algebras~$\mathcal{O}_{A,B}$---which we call \emph{Katsura algebras}---from such matrices.  Katsura showed that every Kirchberg algebra (in the UCT class) is stably isomorphic to some $\mathcal{O}_{A,B}$ and used this concrete realization to prove results pertaining lifts of actions on the $K$-groups of Kirchberg algebras. The Katsura algebras~$\mathcal{O}_{A,B}$ first appear as examples of topological graph $C^*$-algebras in~\cite{KatIV}.

Some years later later, Exel and Pardo introduced the notion of a \emph{self-similar graph}, and showed how to construct a $C^*$-algebra from this data, in~\cite{EP17}. This generalized Nekrashevych's construction from self-similar groups\footnote{A self-similar group may be viewed as a self-similar graph where the graph has only one vertex.} in~\cite{Nek09}. On the other hand, the construction of Exel and Pardo also encompassed the Katsura algebras. They realized that the matrices $A$ and $B$ could be used to describe a self-similar action by the integer group~$\ZZ$ on the graph whose adjacency matrix is~$A$ in such a way that the associated $C^*$-algebra becomes~$\mathcal{O}_{A,B}$. Exel and Pardo also gave a groupoid model for their $C^*$-algebras, and it is the groupoid associated with the aforementioned $\ZZ$-action that we call the \emph{Katsura--Exel--Pardo groupoid}. See Section~\ref{sec:KEP} for details.

The second author computed the homology groups of the Katsura--Exel--Pardo groupoids in~\cite{Ort} (under the assumption of pseudo-freeness, see Subsection~\ref{subsec:characterizations}), and found that the homology groups of $\mathcal{G}_{A,B}$ sum up to the $K$-theory of $C_r^* \left( \mathcal{G}_{A,B} \right) \cong \mathcal{O}_{A,B}$ in accordance with Matui's HK~conjecture~\cite[Conjecture~2.6]{MatProd}. 

In the present paper we make use of the description of the homology groups of $\mathcal{G}_{A,B}$ from~\cite{Ort} to show that the AH~conjecture holds whenever $\mathcal{G}_{A,B}$ is Hausdorff and effective and the matrix $A$ is finite and irreducible (Corollary~\ref{cor:AH}).

There are two subgroupoids of $\mathcal{G}_{A,B}$ that play important roles in the proof. One is the SFT-groupoid $\mathcal{G}_{A} \cong \mathcal{G}_{A,0}$ associated to the matrix $A$. The other is the kernel of the canonical cocycle on $\mathcal{G}_{A,B}$, denoted $\mathcal{H}_{A,B}$. Unlike the case of SFT-groupoids (or graph groupoids), the kernel of the cocycle is no longer an AF-groupoid. This means that we also need to take $H_1 \left( \mathcal{H}_{A,B} \right)$ into account when describing $H_1 \left( \mathcal{G}_{A,B} \right)$. A key observation that drives our proof is that the topological full group $\left\llbracket \mathcal{G}_{A,B} \right\rrbracket$ can be decomposed as $\left\llbracket \mathcal{G}_{A,B} \right\rrbracket = \llbracket \mathcal{H}_{A,B} \rrbracket  \llbracket \mathcal{G}_{A} \rrbracket$,  
when viewing $ \left\llbracket \mathcal{H}_{A,B} \right\rrbracket$  and $\left\llbracket \mathcal{G}_{A} \right\rrbracket$ as subgroups of $\left\llbracket \mathcal{G}_{A,B} \right\rrbracket$.  

We also investigate whether the topological full group $\left\llbracket \mathcal{G}_{A,B} \right\rrbracket$ is finitely generated. Matui has shown that topological full groups of (irreducible) SFT-groupoids are  finitely presented~\cite{MatTFG}. In the same vein, topological full groups associated to self-similar groups were shown to be finitely presented by Nekrashevych whenever the self-similar group is \emph{contracting}~\cite{Nek18}. We extend Nekrashevych's notion of a \emph{contracting} self-similar group to self-similar graphs and show that the self-similar graph associated to the pair of matrices~$A$ and~$B$ is contracting, assuming that  $B$ is entrywise smaller than $A$. Combining this with the finite generation of $\left\llbracket \mathcal{G}_{A} \right\rrbracket$, we show in Theorem~\ref{thm:finGen} that $\left\llbracket \mathcal{G}_{A,B} \right\rrbracket$ is then indeed finitely generated. In contrast, if~$E$ is a graph with an infinite emitter, then the topological full group~$\left\llbracket \mathcal{G}_E \right\rrbracket$ is not finitely generated~\cite[Proposition~10.1]{NO20}.    

We emphasize that the Katsura--Exel--Pardo groupoids are merely prominent special cases of the tight groupoids constructed from self-similar graphs in~\cite{EP17}. Moreover, this construction was further generalized to non-row-finite graphs in~\cite{EPS}. It is therefore a natural question whether the results of this paper can be generalized to other groupoids arising from self-similar graphs. A few things that make the Katsura--Exel--Pardo groupoids particularly nice to work with is that the self-similar action is explicitly given in terms of the matrices $A$ and $B$, the action does not move vertices, and the acting group is abelian (the ``most elementary'' abelian group even).  We believe that the methods employed in this paper could work well for other self-similar graphs where the acting group is abelian and the action fixes the vertices. 

This paper is organized as follows. In Section~\ref{sec:AH}, we briefly recall Matui's AH~conjecture and give references to the necessary preliminaries. The construction of the Katsura--Exel--Pardo groupoid is recalled in detail in Section~\ref{sec:KEP}. Then Hausdorffness, effectiveness and minimality of $\mathcal{G}_{A,B}$ is characterized in terms of the matrices $A$ and $B$. We also observe that if $\mathcal{G}_{A,B}$ satisfies the assumptions in the AH~conjecture, then $\mathcal{G}_{A,B}$ must be purely infinite. In Section~\ref{sec:hom}, we describe the first two homology groups of $\mathcal{G}_{A,B}$. This is done using a long exact sequence that relates the homology groups of $\mathcal{G}_{A,B}$ to those of the kernel groupoid~$\mathcal{H}_{A,B}$.
Our main result, namely that the AH~conjecture is true for Katsura--Exel--Pardo groupoids, is proved in Section~\ref{sec:TR}. Finally, in Section~\ref{sec:finGen}, we prove that $\left\llbracket \mathcal{G}_{A,B} \right\rrbracket$ is finitely generated, provided that $B$ is entrywise smaller than $A$.

\section{The AH~conjecture}\label{sec:AH}


%

As mentioned in the introduction, this paper is a follow-up to our recent paper~\cite{NO20}.
We treat the same problem---namely the AH~conjecture---for a related, but different, class of groupoids. Since the setting is so similar we have chosen to not give an extensive section covering preliminaries, but rather refer the reader to Section~2 of~\cite{NO20} 
and adapt all notation and conventions from there. Topics covered there include ample groupoids, topological full groups, homology of ample groupoids, cocycles and skew products. The reader is hereby warned that notation from~\cite[Section~2]{NO20}
henceforth will be used directly without reference.

Let us move on to describing the AH~conjecture, which predicts a precise relationship between the topological full group and the two first homology groups. For further details, consult~\cite[Section~4]{NO20}.   

\begin{matuiAH*}[{\cite[Conjecture~2.9]{MatProd}}]
Let~${\mathcal{G}}$ be an effective minimal second countable Hausdorff ample groupoid whose unit space $\mathcal{G}^{(0)}$ is a Cantor space. Then the following sequence is exact:
\begin{equation*}
\begin{tikzcd}
H_0(\mathcal{G}) \otimes \mathbb{Z}_2 \arrow{r}{j} & \left\llbracket \mathcal{G} \right\rrbracket_{\text{ab}} \arrow{r}{I_{\text{ab}}} & H_1(\mathcal{G}) \arrow{r} & 0.
\end{tikzcd} 
\end{equation*}
\end{matuiAH*}

The \emph{index map} $I \colon \left\llbracket \mathcal{G} \right\rrbracket \to H_1(\G)$ is the homomorphism given by $\pi_U\mapsto \left[1_{U} \right]$, where $U$ is a full bisection in $\mathcal{G}$, and the induced map on the abelianization~$\llbracket \mathcal{G} \rrbracket_{\text{ab}}$ is denoted $I_{\text{ab}}$. The map $j$ will not be used directly (see e.g.~\cite[Subsection~4.1]{NO20} for its definition).   

Recall the notion of transpositions in the topological full group from~\cite[Subsection~2.2]{NO20}.   
We will let $\mathcal{T}(\mathcal{G})$ denote the subgroup of~$\left\llbracket \mathcal{G} \right\rrbracket$ generated by all transpositions.\footnote{In~\cite{NO20},
 the subgroup generated by all transpositions is denoted $\mathcal{S}(\mathcal{G})$, but for $\mathcal{G} = \mathcal{G}_{A,B}$ we find this to be too similar to the set $\mathcal{S}_{A,B}$ that is defined in Subsection~\ref{subsec:tightgrpd}.} One always has $\mathcal{T}(\mathcal{G}) \subseteq \ker(I)$, and having equality is closely related to the AH~conjecture. 

\begin{definition}[{\cite[Definition~2.11]{MatProd}}]
Let $\mathcal{G}$ be an effective ample Hausdorff groupoid. We say that~$\mathcal{G}$ has \emph{Property~TR} if $\mathcal{T}(\mathcal{G}) = \ker(I)$.
\end{definition}

In the next section, we will see that the Katsura--Exel--Pardo groupoids are purely infinite (in the sense of~\cite[Definition~4.9]{MatTFG}). It then follows that the AH~conjecture is equivalent to having Property~TR for these (see~\cite[Remark~4.12]{NO20}). 
The main goal therefore becomes to establish Property~TR for $\mathcal{G}_{A,B}$.

\section{The Katsura--Exel--Pardo groupoid}\label{sec:KEP}

In this section we recall the construction of the The Katsura--Exel--Pardo groupoid $\mathcal{G}_{A,B}$ from~\cite{EP17},  
and we recall some of its properties.

\subsection{The self similar action by $\ZZ$ on the graph $E_A$}

Let us begin explaining the construction. Let $N \in \NN \cup \{\infty\}$ and let $A$ and $B$ be two row-finite $N \times N$ integral matrices. We require that all entries in $A$ are non-negative and that $A$ has no zero-rows. For the construction we may also assume without loss of generality that $B_{i,j} = 0$ whenever $A_{i,j} = 0$. Let~$E_A$ denote the (directed) graph whose adjacency matrix is $A$. For graphs we freely adopt notation and conventions from~\cite[Section~3]{NO20}.
In addition to that, given a finite path~$\mu = e_1 e_2 \cdots e_k \in E_A^*$ and an index $1 \leq j \leq k$, the subpath $e_1 e_2 \ldots e_j$ is denoted $\mu \vert_{j}$. We will call a matrix \emph{essential} if it has no zero-rows and no zero-columns. 

We will now describe how the matrices $A$ and $B$ give rise to a self-similar action by the integer group~$\ZZ$ on the graph $E_A$ as in the framework of~\cite{EP17}. In the next subsection, we will describe the associated (tight) groupoid. 

\begin{remark}
We remark that Exel and Pardo use the opposite convention for paths in~\cite{EP17}, which means that their paths go ``backwards'' in the graph. 
\end{remark}

To describe the action $\kappa \colon \ZZ \curvearrowright E_A$ we need to fix an (arbitrary) enumeration of the edges in $E_A$ as follows 
\begin{equation*}
E_A^1 = \{e_{i,j,n} \mid 1 \leq i,j \leq N, \ 0 \leq n < A_{i,j} \}. 
\end{equation*}
Then  $s ( e_{i,j,n} ) = i$ and $r ( e_{i,j,n} ) = j$, when enumerating the vertices as ${E_A^0 = \{1,2, \ldots, N\}}$.
Let $m \in \ZZ$ and $e_{i,j,n} \in E_A^1$ be given. By the division algorithm there are unique integers~$q$ and~$r$ satisfying 
\begin{equation*}
m B_{i,j} + n = q A_{i,j} + r \quad  \text{and} \quad 0 \leq r < A_{i,j}.   
\end{equation*}
The action $\kappa$ is defined to be trivial on the vertices (i.e.~$\kappa_m(i) = i$), and on edges it is given by
\begin{equation*}
\kappa_m(e_{i,j,n}) \coloneqq e_{i,j,r}. 
\end{equation*}
In words $\kappa_m$ maps the $n$'th edge between the vertices $i$ and $j$ to the $r$'th edge, where $r$ is the remainder of $m B_{i,j} + n$ modulo $A_{i,j}$. The associated \emph{one-cocycle} $\varphi \colon \ZZ \times E_A^1 \to \ZZ$ is given by
\begin{equation*}
\varphi(m, e_{i,j,n}) \coloneqq q.
\end{equation*}
The cocycle condition \[\varphi(m_1 + m_2, e) = \varphi(m_1, \kappa_{m_2}(e)) + \varphi(m_2,e)\] is easily seen to be satisfied. That same computation shows that $\kappa_{m_1 + m_2} = \kappa_{m_1} \circ \kappa_{m_2}$. Furthermore, the standing assumption (2.3.1) on page 1051 of~\cite{EP17} is trivially satisfied since $\kappa$ fixes the vertices. Note that $\varphi(0, e) = 0$ and $\kappa_0(e) = e$ for all~${e \in E_A^1}$.


As in~\cite[Proposition~2.4]{EP17} $\kappa$ and $\varphi$ extends inductively to finite paths by setting 
\begin{equation*}
\kappa_m(\mu e) \coloneqq \kappa_m(\mu) \kappa_{\varphi(m, \mu)}(e) \quad \text{and} \quad \varphi(m, \mu e) \coloneqq \varphi (\varphi(m, \mu), e)
\end{equation*}
for $\mu \in E_A^*$ and $e \in r(\mu) E_A^1$. Explicitly, for a finite path $\mu = e_1 e_2 \cdots e_k \in E_A^*$ we  have 
\begin{equation}\label{eq:kappafinite}
\kappa_m(\mu) = \kappa_m(e_1)        \kappa_{\varphi(m,e_1)}(e_2)       \kappa_{\varphi(m,e_1 e_2)}(e_3) \cdots \kappa_{\varphi(m,\mu \vert_{k-1})}(e_k)
\end{equation}
and 
\begin{equation}\label{eq:iteratephi}
\varphi(m, \mu) = \varphi\left(\varphi(\ldots (\varphi (\varphi(m, e_1), e_2), \ldots), e_{k-1}), e_k\right).
\end{equation}
By allowing Equation~\eqref{eq:kappafinite} to go on ad infinitum, $\kappa$ extends to an action on the infinite path space $E_A^\infty$. Note that we still have
\[\varphi(m_1 + m_2, \mu) = \varphi(m_1, \kappa_{m_2}(\mu)) + \varphi(m_2,\mu)\]
and 
\[\kappa_m(\mu \nu) = \kappa_m(\mu) \kappa_{\varphi(m, \mu)}(\nu)\]
for $\mu, \nu \in E_A^*$ with $r(\mu) = s(\nu)$. The latter formula also holds if $\nu$ is replaced by an infinite path.


\subsection{Describing the tight groupoid}\label{subsec:tightgrpd}

Define the set \[\mathcal{S}_{A,B} \coloneqq \{(\mu, m, \nu) \in E_A^* \times \ZZ \times E_A^* \mid r(\mu) = r(\nu)  \}.\]
In~\cite{EP17}, the set $\mathcal{S}_{A,B}$ is given the structure of an inverse $*$-semigroup which acts (partially) on the infinite path space $E_A^\infty$. In brief terms this partial action is given by 
\begin{equation*}
 (\mu, m, \nu) \cdot \nu y = \mu \kappa_m(y) \text{ for } y \in r(\nu) E_A^\infty.
\end{equation*}
Following~\cite{Ort} we skip directly to the concrete description of the tight groupoid $\mathcal{G}_\text{tight}(\mathcal{S}_{A,B})$ given in~\cite[Section~8]{EP17}. 

Consider the set of all quadruples $(\mu, m, \nu; x)$ where $(\mu, m, \nu) \in \mathcal{S}_{A,B}$ and $x \in Z(\nu)$. Then we can write $x = \nu e z$ for some $e \in E_A^1$ and $z \in E_A^\infty$. Let $\sim$ be the equivalence relation on this set of quadruples generated by the basic relation 
\begin{equation}\label{eq:eqrel}
(\mu, m, \nu; x) \sim (\mu \kappa_m(e), \varphi(m,e), \nu e; x).
\end{equation}
Denote the equivalence class of $(\mu, m, \nu; x)$ under $\sim$ by $\left[ \mu, m, \nu; x \right]$. In particular, we have 
\[\left[ \mu, m, \nu; x \right] = \left[ \mu \kappa_m(y \vert_j), \varphi(m,y \vert_j), \nu y \vert_j; x \right]\]
for each $j \in \mathbb{N}$, where $y$ is the infinite path satisfying $x = \nu y$. It is somewhat cumbersome to explicitly write this equivalence relation out, but it can be done as follows. Let~${(\mu, m, \nu), (\lambda, n, \tau) \in \mathcal{S}_{A,B}}$, $x \in Z(\nu)$ and $z \in Z(\tau)$. Then \[\left[ \mu, m, \nu; x \right] = \left[ \lambda, n, \tau; z \right] \] if and only if
\begin{itemize}
\item $x = z$, so then $x = \nu y = \tau w$ for some infinite paths $y$ and $w$. In particular, $\nu$ is a subpath of $\tau$ or vice versa. 
\item $\left| \mu \right| - \left|  \nu \right| = \left| \lambda \right| - \left|  \tau \right|$.
\item $\mu \kappa_m(y) = \lambda \kappa_n(w)$.
\item $\varphi(m, y \vert_j) = \varphi(n, w \vert_l)$ for some $j,l \in \mathbb{N}$ with $l - j = \left| \mu \right| - \left|  \nu \right|$.
\end{itemize}

We define the \emph{Katsura--Exel--Pardo groupoid} to be 
\[\mathcal{G}_{A,B} \coloneqq \left\{\left[ \mu, m, \nu; x \right] \mid (\mu, m, \nu) \in \mathcal{S}_{A,B}, \ x \in Z(\nu) \right\}. \]
Writing $x = \nu y$, the inverse operation is given by 
\[\left[ \mu, m, \nu; x \right]^{-1} \coloneqq \left[ \nu, -m, \mu; \mu \kappa_m(y) \right].   \] 
The composable pairs are 
\[\mathcal{G}_{A,B}^{(2)} \coloneqq \left\{ \left(  \left[ \lambda, n, \tau; z \right]   ,   \left[ \mu, m, \nu; \nu y \right] \right)  \in \mathcal{G}_{A,B} \times \mathcal{G}_{A,B} \mid  \mu \kappa_m(y) = z  \right\} \]
and the product is given by 
\[  \left[ \lambda, n, \tau; z \right]   \cdot  \left[ \mu, m, \nu; x \right]  \coloneqq  \left[ \lambda \kappa_m(\tau'), \varphi(n, \tau') +m, \nu; x \right],     \]
in the case that $\mu = \tau \tau'$. In the case that $\tau = \mu \mu'$ the formula is slightly more complicated, so let us instead use the equivalence relation $\sim$ to state a simpler ``standard form'' for the product. Using the basic relation~\eqref{eq:eqrel} we can choose representatives with $\left| \tau \right| = \left| \mu \right|$, which forces $\tau = \mu$. Hence every composable pair and their product can be represented as 
\[   \left[ \lambda, n, \mu; \mu \kappa_m(y) \right]   \cdot  \left[ \mu, m, \nu; \nu y \right] = \left[ \lambda, n+m, \nu; \nu y \right].  \]

The source and range maps are given by 
\begin{align*}
s\left(\left[ \mu, m, \nu; \nu y \right]  \right) &=  \left[ \nu, 0, \nu; \nu y \right] = \left[ s(\nu), 0, s(\nu); \nu y \right],  \\
r\left(\left[ \mu, m, \nu; \nu y \right]  \right) &=  \left[ \mu, 0, \mu; \mu \kappa_m(y) \right] = \left[ s(\mu), 0, s(\mu); \mu \kappa_m(y) \right]. 
\end{align*}
Thus we may identify the unit space $\mathcal{G}_{A,B}^{(0)}$ with the infinite path space $E_A^\infty$ under the correspondence $\left[ s(x), 0, s(x); x \right] \leftrightarrow x$. This correspondence is also compatible with the topology on $\mathcal{G}_{A,B}$ that will be specified shortly. The source and range maps become 
\[s\left(\left[ \mu, m, \nu; x \right]  \right) = x \quad \text{and} \quad r\left(\left[ \mu, m, \nu; \nu y \right]  \right) = \mu \kappa_m(y).  \]

For a triple $(\mu, m, \nu) \in \mathcal{S}_{A,B}$ we define
\[ Z(\mu, m, \nu) \coloneqq \left\{ \left[ \mu, m, \nu; x \right] \mid x \in Z(\nu)   \right\}.\]
These sets form a basis for the topology on $\mathcal{G}_{A,B}$, in which each basic set $Z(\mu, m, \nu)$ is a compact open bisection~\cite[Proposition~9.4]{EP17}. Note that 
\[ s\left(Z(\mu, m, \nu) \right) = Z(\nu) \quad \text{and} \quad  r\left(Z(\mu, m, \nu) \right) = Z(\mu).  \]
The Katsura--Exel--Pardo groupoid $\mathcal{G}_{A,B} \cong \mathcal{G}_\text{tight}(\mathcal{S}_{A,B})$ is ample, second countable and amenable~\cite{EP17}. However, it is not always Hausdorff. This, and other properties, will be characterized in the next subsection. 

An important observation that will be exploited in several of the coming proofs is that the graph groupoid $\mathcal{G}_{E_A}$ is isomorphic to $\mathcal{G}_{A,0}$, and moreover embeds canonically into $\mathcal{G}_{A,B}$ for any matrix $B$. Observe that in $\mathcal{G}_{A,0}$ we have 
$\left[ \mu, m, \nu; \nu y \right] = \left[ \mu, 0, \nu; \nu y \right]$ for each $m \in \ZZ$. Hence mapping $\left[ \mu, 0, \nu; \nu y \right]$ to $(\mu y, \left| \mu \right| - \left| \nu \right|, \nu y )$ yields an isomorphism between $\mathcal{G}_{A,0}$ and~$\mathcal{G}_{E_A}$. Furthermore, it is clear that $\left[ \mu, 0, \nu; x \right] \mapsto \left[ \mu, 0, \nu; x \right]$ gives an étale embedding~$\mathcal{G}_{A,0} \hookrightarrow \mathcal{G}_{A,B}$ which preserves the unit space. 

Another special case is when $A = B$. Then we have $\mathcal{G}_{A,A} \cong \mathcal{G}_{A} \times \ZZ$ (where~$\ZZ$ is viewed as a group(oid)). These groupoids fall outside of the scope of the AH~conjecture, however, for they are far from being effective.



\subsection{When is $\mathcal{G}_{A,B}$ Hausdorff, effective and minimal?}\label{subsec:characterizations}

We begin by noting that $\mathcal{G}_{A,B}$ has compact unit space if and only if $N < \infty$ (i.e.\ $A$ and $B$ are finite matrices). And in this case it is a Cantor space precisely when $E_A$ satisfies Condition~(L).

Before characterizing Hausdorfness precisely, we discuss a sufficient condition known as \emph{pseudo-freeness}. This is an underlying assumption in~\cite{Ort}. The action ${\kappa \colon \ZZ \curvearrowright E_A}$ is called \emph{pseudo-free} if $\kappa_m(e) = e$ and $\varphi(m, e) = 0$ implies $m = 0$, for $m \in \ZZ$ and~${e \in E_A^1}$ (see~\cite[Definition~5.4]{EP17} for the general definition). Combining Lemma~18.5 and Proposition~12.1 from~\cite{EP17} yields the following.   

\begin{proposition}[{\cite{EP17}}] 
The action $\kappa \colon \ZZ \curvearrowright E_A$ is pseudo-free if and only if $A_{i,j} = 0$ whenever $B_{i,j} = 0$. And when this is the case $\mathcal{G}_{A,B}$ is Hausdorff.
\end{proposition}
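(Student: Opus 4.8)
The plan is to separate the statement into two parts: the entrywise characterization of pseudo-freeness, which I would verify by a direct computation, and the Hausdorffness conclusion, which I would obtain by citing~\cite{EP17}.

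For the characterization, I would fix an edge $e_{i,j,n} \in E_A^1$ and an integer $m$, and run the division algorithm $mB_{i,j} + n = qA_{i,j} + r$ with $0 \leq r < A_{i,j}$, so that $\kappa_m(e_{i,j,n}) = e_{i,j,n}$ means $r = n$ while $\varphi(m, e_{i,j,n}) = 0$ means $q = 0$. The crux is the observation that these two conditions hold simultaneously if and only if $mB_{i,j} = 0$: if $q = 0$ and $r = n$ then $mB_{i,j} + n = n$, so $mB_{i,j} = 0$; conversely, if $mB_{i,j} = 0$ then $mB_{i,j} + n = n$ with $0 \leq n < A_{i,j}$, and uniqueness in the division algorithm forces $q = 0$ and $r = n$.

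Granting this, pseudo-freeness says exactly that $mB_{i,j} = 0$ implies $m = 0$ for every edge $e_{i,j,n}$ and every $m$. Since an edge $e_{i,j,n}$ exists precisely when $A_{i,j} > 0$, this amounts to requiring, for each pair $(i,j)$ with $A_{i,j} > 0$, that $mB_{i,j} = 0 \Rightarrow m = 0$; for fixed $(i,j)$ this implication holds if and only if $B_{i,j} \neq 0$. Thus pseudo-freeness is equivalent to demanding $B_{i,j} \neq 0$ whenever $A_{i,j} > 0$, whose contrapositive is precisely the stated condition that $A_{i,j} = 0$ whenever $B_{i,j} = 0$.

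For the final clause I would simply invoke the combination of~\cite[Proposition~12.1]{EP17} and~\cite[Lemma~18.5]{EP17}, which together yield that pseudo-freeness of $\kappa$ implies Hausdorffness of the tight groupoid $\mathcal{G}_{A,B}$. I do not anticipate a genuine obstacle: the argument is entirely elementary, and the only point demanding care is the logical bookkeeping when passing between the quantified pseudo-freeness condition and the entrywise matrix condition---in particular, remembering that the existence of the edge $e_{i,j,n}$ is itself the hypothesis $A_{i,j} > 0$, which is what ties the quantifier over edges to the quantifier over matrix entries.
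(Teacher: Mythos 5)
Your proposal is correct. The only difference from the paper is that the paper gives no argument at all: it obtains both halves of the statement by pure citation, combining Lemma~18.5 of~\cite{EP17} (the entrywise characterization of pseudo-freeness) with Proposition~12.1 of~\cite{EP17} (pseudo-freeness implies Hausdorffness of the tight groupoid). You instead reprove the first half directly, and your computation is sound: for an edge $e_{i,j,n}$ the pair of conditions $\kappa_m(e_{i,j,n}) = e_{i,j,n}$ and $\varphi(m,e_{i,j,n}) = 0$ is, by uniqueness in the division algorithm, equivalent to $mB_{i,j} = 0$, and since $\ZZ$ has no zero divisors the implication $mB_{i,j} = 0 \Rightarrow m = 0$ holds for a fixed pair $(i,j)$ exactly when $B_{i,j} \neq 0$. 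Your care with the quantifiers --- that the existence of an edge $e_{i,j,n}$ is precisely the hypothesis $A_{i,j} > 0$, so the edgewise condition translates into the entrywise condition on the matrices --- is exactly the point where a sloppy argument could go wrong, and you handle it correctly. What your route buys is a self-contained verification specific to the Katsura case, where the cocycle is explicitly given by the division algorithm, in place of an appeal to the general self-similar-graph machinery; what the paper's route buys is brevity and uniformity, since the cited results cover arbitrary self-similar graphs, not just the $\ZZ$-actions arising from pairs $(A,B)$. For the Hausdorffness clause you rely on the same external result as the paper, which is unavoidable at this level of detail.
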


A precise characterization of when $\mathcal{G}_{A,B}$ is Hausdorff is the following.

\begin{proposition}[{\cite[Theorem~18.6]{EP17}}] 
The following are equivalent:
\begin{enumerate}
\item The Katsura--Exel--Pardo groupoid $\mathcal{G}_{A,B}$ is Hausdorff.
\item Whenever $B_{i,j} = 0$ while $A_{i,j} \geq 1$, then for any $m \in \ZZ \setminus \{0\}$ the set \[ \left\{\mu \in E_A^* \mid r(\mu) = i \ \& \   m \frac{B_{\mu \vert_t}}{A_{\mu \vert_t}} \in \ZZ \setminus \{0\} \text{ for } 1 \leq t \leq \left| \mu \right| \right\} \] is finite.
\end{enumerate}
\end{proposition}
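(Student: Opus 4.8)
The plan is to appeal to the standard Hausdorffness criterion for tight groupoids of inverse semigroups from \cite{EP17}: since $\mathcal{G}_{A,B} \cong \mathcal{G}_{\text{tight}}(\mathcal{S}_{A,B})$ is étale with unit space $E_A^\infty$, it is Hausdorff if and only if $E_A^\infty$ is closed in $\mathcal{G}_{A,B}$, equivalently if and only if for every $(\mu,m,\nu)\in\mathcal{S}_{A,B}$ the set of points at which its germ is trivial is open. Because a fixed point forces $|\mu|=|\nu|$ and hence $\mu=\nu$, this reduces to studying, for each $m\neq 0$, the partial homeomorphism $\kappa_m$ acting on $E_A^\infty$. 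First I would record the elementary computation that $[\nu,m,\nu;\nu y]$ is a \emph{unit} exactly when $\kappa_m(y)=y$ and the iterated cocycle $\varphi(m,y\vert_j)$ vanishes for some $j$, and set
\[ V_m \coloneqq \left\{ \xi \in E_A^\infty \mid \kappa_m(\xi)=\xi \ \text{ and } \ \varphi(m,\xi\vert_j)=0 \text{ for some } j \right\}. \]
A one-line argument shows $V_m$ is always open, so the criterion becomes: \emph{$\mathcal{G}_{A,B}$ is Hausdorff iff $V_m$ is closed for every $m\neq 0$}. The whole proof is then the translation of ``$V_m$ not closed'' into condition~(2).

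The computational heart is the observation that along a path $\mu=e_1\cdots e_k$ that $\kappa_m$ fixes edgewise, the iterated cocycle is given by the product formula $\varphi(m,\mu\vert_t)=m\,B_{\mu\vert_t}/A_{\mu\vert_t}$, and that an edge $e_t$ is fixed precisely when this ratio is an integer. Consequently the set appearing in~(2) is exactly the set of finite paths ending at $i$ that $\kappa_m$ fixes edgewise while the running cocycle stays a nonzero integer throughout. The decisive point, immediate from $m_t=m_{t-1}B_{s(e_t),r(e_t)}/A_{s(e_t),r(e_t)}$ and $A_{s(e_t),r(e_t)}\geq 1$, is that the running cocycle can drop from a nonzero value to $0$ only across an edge $e_t$ with $B_{s(e_t),r(e_t)}=0$ --- that is, across a degenerate pair.

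For the forward direction I would assume $V_m$ is not closed and pick $\xi\in\overline{V_m}\setminus V_m$; then $\kappa_m(\xi)=\xi$ while $\varphi(m,\xi\vert_j)\neq 0$ for all $j$, and for each $j$ there is a point $\xi^{(j)}\in V_m$ agreeing with $\xi$ up to level $j$. Its cocycle, still nonzero at level $j$, must die at some later level $l_j>j$, and by the decisive point above this happens across a degenerate edge whose source $i$ is the endpoint of the fixed, nonzero-cocycle path $\xi^{(j)}\vert_{l_j-1}$. These prefixes have lengths $l_j-1\geq j\to\infty$, so (after a pigeonhole over the finitely many degenerate pairs, when $A$ and $B$ are finite) the set in~(2) is infinite for some pair $(i,j)$ and this $m$.

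Finally, the converse is the delicate direction and the step I expect to be the main obstacle. Given that the set in~(2) is infinite, I would feed the prefix-closed family of fixed, nonzero-cocycle paths into König's lemma --- using row-finiteness to guarantee finite branching --- to extract an infinite path $\xi$ with $\kappa_m(\xi)=\xi$ and nonzero cocycle at every level, so $\xi\notin V_m$. The subtlety is to arrange the König extraction so that \emph{every} prefix of $\xi$ still extends, through fixed nonzero-cocycle edges, to a path terminating at $i$; appending the degenerate edge (which kills the cocycle) and any tail then produces points of $V_m$ converging to $\xi$, witnessing $\xi\in\overline{V_m}$ and hence non-Hausdorffness. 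Making this reachability survive the passage to the limit --- rather than having the branch wander away from the recurrent part of the graph --- is exactly where care is needed, and is where one exploits finiteness, a recurrent state of the form $(\text{vertex},\text{cocycle value})$ yielding a genuinely periodic accumulation point.
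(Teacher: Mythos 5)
The first thing to note is that the paper contains no proof of this proposition: it is imported verbatim (modulo correcting a misprint, as the subsequent remark explains) from Exel--Pardo~\cite{EP17}, Theorem~18.6. So the relevant comparison is with the proof in~\cite{EP17}, which runs through their general Hausdorffness criterion for tight groupoids of self-similar graphs (Section~12 of~\cite{EP17}): the groupoid is Hausdorff if and only if each group element admits only finitely many \emph{minimal strongly fixed} paths, and Theorem~18.6 is the translation of this in the Katsura case. Your proposal is, in essence, a self-contained reconstruction of that argument, and it is correct in its main lines. Your reduction to closedness of the sets $V_m$ is exactly how the general criterion is proved (an étale groupoid with Hausdorff unit space is Hausdorff iff the unit space is closed, and since the cocycle $c$ kills bisections with $\left|\mu\right| \neq \left|\nu\right|$ and a unit forces equal range and source, only the bisections $Z(\nu,m,\nu)$ can contain units); note the small slip where you first say the trivial-germ locus should be ``open''---it is always open, and your own next sentence corrects this to the right criterion, closedness. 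The computational heart is also right: an edge $e_{i,j,n}$ is fixed by $\kappa_m$ iff $A_{i,j} \mid mB_{i,j}$, in which case $\varphi(m,e_{i,j,n}) = mB_{i,j}/A_{i,j}$, so along edgewise-fixed paths $\varphi(m,\mu\vert_t) = mB_{\mu\vert_t}/A_{\mu\vert_t}$, and the running cocycle can die only across a degenerate edge ($B_{i,j}=0$, $A_{i,j}\geq 1$). Your forward direction is sound, and your converse works once König's lemma is applied to the prefix-closed tree of \emph{prefixes of elements of the set in~(2)} (rooted at a common starting vertex, obtained by pigeonhole): every prefix of the resulting branch $\xi$ then automatically extends to a path of the set, and appending the degenerate edge produces units arbitrarily close to the non-unit germ $\left[ s(\xi), m, s(\xi); \xi \right]$, so $\xi \in \overline{V_m}\setminus V_m$.

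Two caveats. First, both pigeonhole steps (finitely many degenerate pairs in the forward direction, finitely many starting vertices to root the König tree in the converse) genuinely use $N < \infty$, which you flag; the proposition as stated in the paper allows $N = \infty$, though the paper only ever uses it under the standing assumption $N<\infty$. Second, your closing suggestion to exploit a recurrent state of the form (vertex, cocycle value) and extract a \emph{periodic} accumulation point is the one misstep: the cocycle values $mB_{\mu\vert_t}/A_{\mu\vert_t}$ along fixed paths need not range over a finite set (entries of $B$ may exceed those of $A$ here, unlike in Section~6 of the paper), so there is no finiteness at the level of such states to pigeonhole on. Fortunately this is not needed---the prefix-tree version of the König extraction already guarantees the reachability of the vertex $i$ that you were worried about, so the periodicity device should simply be dropped.
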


\begin{remark}
There is a small misprint in the statement of~\cite[Theorem~18.6]{EP17}, which is why the statement above differs slightly (even after reversing the direction of the edges).
\end{remark}

%
%
%


The minimality of $\mathcal{G}_{A,B}$ turns out to be independent of the matrix $B$, and is only governed by the minimality of the graph groupoid $\mathcal{G}_{E_A}$.

\begin{proposition}[{\cite[Theorem~18.7]{EP17}}]\label{prop:minimal}
The Katsura--Exel--Pardo groupoid $\mathcal{G}_{A,B}$ is minimal if and only if the graph $E_A$ is cofinal. 
\end{proposition}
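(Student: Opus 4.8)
The plan is to prove minimality by analyzing orbits directly on the unit space $E_A^\infty$, and to show that the twisting $\ZZ$-action $\kappa$ changes nothing about which orbits are dense, thereby reducing the statement to the corresponding (known) fact for the graph groupoid $\mathcal{G}_{E_A}$. Recall that a second countable ample groupoid is minimal precisely when every orbit is dense in its unit space. Under the identification $\mathcal{G}_{A,B}^{(0)} \cong E_A^\infty$, the orbit of a point $x$ is the set of ranges $r(g)$ of arrows $g$ with $s(g) = x$. Reading off the source and range formulas, an arrow with source $x$ has the form $[\mu, m, \nu; x]$ with $\nu$ a finite prefix of $x$, say $x = \nu y$, and then its range is $\mu \kappa_m(y)$, where $\mu$ is any finite path with $r(\mu) = r(\nu) = s(y)$ and $m \in \ZZ$ is arbitrary. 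Hence the orbit of $x$ is exactly $\{ \mu \kappa_m(y) : x = \nu y, \ r(\mu) = s(y), \ m \in \ZZ \}$. I would first record this description and note that the basic cylinders $Z(\lambda)$, for $\lambda \in E_A^*$, form a basis for $E_A^\infty$, so density of the orbit of $x$ amounts to the orbit meeting $Z(\lambda)$ for every finite path $\lambda$.

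The key observation driving the argument is that $\kappa_m$ fixes vertices: since $\kappa_m(e_{i,j,n}) = e_{i,j,r}$ preserves both source and range, formula~\eqref{eq:kappafinite} shows that $\kappa_m(y)$ has the same sequence of vertices as $y$ for every (finite or infinite) path $y$. Consequently, an element $\mu \kappa_m(y)$ of the orbit of $x$ lies in $Z(\lambda)$ only if the end vertex $r(\lambda)$ occurs among the vertices of $\mu$ or of $y$; in either case $r(\lambda)$ connects by a finite path to some vertex visited by $x$ (either directly, when it is a vertex of $y$, or by following the tail of $\mu$ to $s(y)$). Conversely, if $r(\lambda)$ connects to a vertex visited by $x$, then choosing $m = 0$ and $\mu = \lambda\rho$ for a suitable connecting path $\rho$ already produces an element of the orbit in $Z(\lambda)$. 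I would spell this out to conclude that the orbit of $x$ in $\mathcal{G}_{A,B}$ meets $Z(\lambda)$ if and only if $r(\lambda)$ can be joined by a path in $E_A$ to a vertex visited by $x$ --- a condition that makes no reference to $B$ or to $m$. In particular the closures of the $\mathcal{G}_{A,B}$-orbit and the $\mathcal{G}_{E_A}$-orbit of every point coincide.

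From here the proposition follows in two short steps. If $E_A$ is cofinal then, by the standard characterization of minimality for graph groupoids (as recalled in~\cite{NO20}), $\mathcal{G}_{E_A}$ is minimal; since every $\mathcal{G}_{A,B}$-orbit contains the corresponding $\mathcal{G}_{E_A}$-orbit, it is dense as well, and $\mathcal{G}_{A,B}$ is minimal. For the converse I would argue contrapositively: if $E_A$ is not cofinal, pick a vertex $v$ and an infinite path $x$ such that $v$ reaches no vertex visited by $x$ (here the standing assumption that $A$ is row-finite with no zero rows guarantees that $v$ emits an infinite path, so the cylinder $Z(v)$ is nonempty); the criterion above, applied to $\lambda = v$, then shows the orbit of $x$ misses $Z(v)$, so $\mathcal{G}_{A,B}$ is not minimal. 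The only genuinely delicate point --- and the main thing to get right --- is the bookkeeping in the orbit computation: one must correctly account for the freedom in the prefix $\nu$, the prepended path $\mu$, and the twist $m$, and verify that preservation of vertices by $\kappa$ really does collapse the density condition to pure reachability in $E_A$. Once that is in hand, the independence from $B$ and the equivalence with cofinality are immediate.
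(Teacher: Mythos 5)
Your proof is correct, but it cannot be compared step-by-step with the paper's, because the paper gives no proof of Proposition~\ref{prop:minimal} at all: the statement is quoted from \cite[Theorem~18.7]{EP17}, which is itself a specialization of a general minimality criterion for tight groupoids of self-similar graphs (\cite[Theorem~13.6]{EP17}). Your direct orbit computation is a sound, self-contained replacement, and the delicate points you flagged do check out: arrows with source $x$ are exactly the classes $[\mu,m,\nu;x]$ with $\nu$ a prefix of $x$, so the orbit of $x=\nu y$ is $\{\mu\kappa_m(y)\}$ as you describe; since $\kappa_{m'}(e_{i,j,n})=e_{i,j,r}$ has the same source and range as $e_{i,j,n}$, Equation~\eqref{eq:kappafinite} shows that $\kappa_m$ preserves vertex sequences of finite and infinite paths; your two-case analysis is legitimate because $\lambda$ and $\mu$ are both prefixes of the same infinite path $\mu\kappa_m(y)$ and hence comparable; and the converse direction with $m=0$ and $\mu=\lambda\rho$ uses $\kappa_0=\id$ and the fact that $Z(v)\neq\emptyset$ because $A$ has no zero rows. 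This yields your criterion that the orbit of $x$ meets $Z(\lambda)$ if and only if $r(\lambda)$ reaches a vertex visited by $x$ --- a condition independent of $B$ and of $m$ --- from which both implications follow. A merit of your route is that it substantiates the remark the paper makes right after the proposition: the only feature of the action you use is that it fixes vertices, so your argument actually proves the statement for any vertex-fixing self-similar graph, and it makes the independence from $B$ completely transparent. What the citation route buys instead is Exel--Pardo's general characterization, which applies even when the acting group moves vertices, in which case cofinality of the graph alone is no longer the right condition and the orbit computation must track the action on vertices.
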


In particular, if the matrix $A$ is irreducible (which is equivalent to $E_A$ being strongly connected), then $\mathcal{G}_{A,B}$ is minimal. The converse holds if $E_A$ has no sources (nor sinks).

\begin{remark}
Proposition~\ref{prop:minimal} actually holds for any self-similar graph in which the vertices are fixed. A general  characterization is given in~\cite[Theorem~13.6]{EP17}.
\end{remark}

Let us move on to characterizing when $\mathcal{G}_{A,B}$ is effective.\footnote{The term ``essentially principal'' is used in~\cite{EP17}.}

\begin{proposition}[{\cite[Theorem~18.8]{EP17}}] 
The following are equivalent:
\begin{enumerate}
\item The Katsura--Exel--Pardo groupoid $\mathcal{G}_{A,B}$ is effective.
\item \begin{enumerate}
\item The graph $E_A$ satisfies Condition~(L).
\item If $1 \leq i \leq N$, $m \in \ZZ \setminus \{0\}$, and for all $x \in Z(i)$ we have $\displaystyle m \frac{B_{x \vert_t}}{A_{x \vert_t}} \in \ZZ$ for all $t \in \NN$, then there exists $T \in \NN$ such that $B_{x \vert_T} = 0$ for all $x \in Z(i)$.
\end{enumerate}
\end{enumerate}
\end{proposition}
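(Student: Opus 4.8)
The plan is to work directly with the definition of effectiveness as the statement that the interior of the isotropy bundle equals the unit space, since $\mathcal{G}_{A,B}$ need not be Hausdorff and so one cannot simply invoke topological principality. Because the compact open bisections $Z(\mu,m,\nu)$ form a basis, a non-unit isotropy element lies in the interior of the isotropy bundle if and only if it is contained in some basic bisection $Z(\mu,m,\nu)$ that is itself contained in the isotropy. Thus the whole proposition reduces to the single assertion that $\mathcal{G}_{A,B}$ is effective if and only if every basic bisection $Z(\mu,m,\nu)$ contained in the isotropy consists of units. The first preparatory step is a bookkeeping lemma identifying the cocycle: an induction on path length, using the recursive definitions of $\kappa$ and $\varphi$, shows that $\kappa_m$ fixes a finite or infinite path $x=e_1e_2\cdots$ precisely when $m\,B_{x\vert_t}/A_{x\vert_t}\in\ZZ$ for every $t$, and that in this case $\varphi(m,x\vert_t)=m\,B_{x\vert_t}/A_{x\vert_t}$, where $B_{x\vert_t}$ and $A_{x\vert_t}$ denote the products of the relevant matrix entries along $x\vert_t$. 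This is the bridge between the dynamics of $\kappa$ and the arithmetic condition in~(2)(b).

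With this in hand I analyse when a basic bisection lies in the isotropy. Writing $x=\nu y$, the element $[\mu,m,\nu;x]$ has source $x$ and range $\mu\kappa_m(y)$, so $Z(\mu,m,\nu)\subseteq\mathrm{Iso}(\mathcal{G}_{A,B})$ means $\mu\kappa_m(y)=\nu y$ for all $y\in r(\nu)E_A^\infty$. Comparing lengths splits this into three cases. If $\left|\mu\right|>\left|\nu\right|$ then $\nu$ is a prefix of $\mu$, say $\mu=\nu\alpha$, and the equation becomes $\alpha\kappa_m(y)=y$ for every $y$; iterating forces every infinite path from $r(\nu)$ to equal $\alpha^\infty$, so $\alpha$ is a cycle without entry and $E_A$ fails Condition~(L). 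The case $\left|\mu\right|<\left|\nu\right|$ is symmetric via the inverse bisection. If $\left|\mu\right|=\left|\nu\right|$ then $\mu=\nu$ and $\kappa_m$ fixes every $y\in r(\nu)E_A^\infty$, which by the bridge lemma is exactly the hypothesis of~(2)(b) with $i=r(\nu)$.

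This analysis yields both implications. For $(2)\Rightarrow(1)$, assume~(a) and~(b) and take $Z(\mu,m,\nu)\subseteq\mathrm{Iso}$. The unequal-length cases produce a cycle without entry, contradicting~(a), so necessarily $\mu=\nu$; if $m=0$ the bisection consists of units, while if $m\neq0$ condition~(b) supplies a uniform $T$ with $B_{y\vert_T}=0$ for all $y\in Z(r(\nu))$, whence $\varphi(m,y\vert_T)=0$ and the relation~\eqref{eq:eqrel} rewrites each $[\nu,m,\nu;\nu y]$ as the unit $[\nu y\vert_T,0,\nu y\vert_T;\nu y]$. For $(1)\Rightarrow(2)$ I argue contrapositively. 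If~(a) fails there is a cycle $\alpha$ without entry based at a vertex $v$, and then $Z(\alpha,0,v)$ is a one-point open bisection (its source space is the singleton $Z(v)=\{\alpha^\infty\}$) consisting of a single non-unit isotropy element, so $\mathcal{G}_{A,B}$ is not effective. If~(b) fails there are $i$ and $m\neq0$ such that $\kappa_m$ fixes all of $Z(i)$ but no uniform $T$ annihilates $B$ along every path from $i$.

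The step I expect to be the main obstacle is extracting, from the failure of~(b), a genuinely non-unit isotropy element. Failure of~(b) only denies a \emph{uniform} bound $T$, whereas the bisection $Z(i,m,i)$ already consists of units as soon as \emph{each} path $y\in Z(i)$ admits \emph{some} $T_y$ with $B_{y\vert_{T_y}}=0$. The resolution is compactness: since $A$ is row-finite, the cylinder $Z(i)$ is compact, so if every path had such a $T_y$ the increasing open cover by the sets $\{\,y:B_{y\vert_t}=0\,\}$ would admit a uniform bound, contradicting the failure of~(b). Hence some $x^\ast\in Z(i)$ satisfies $B_{x^\ast\vert_t}\neq0$ for all $t$, so that $\varphi(m,x^\ast\vert_t)=m\,B_{x^\ast\vert_t}/A_{x^\ast\vert_t}$ never vanishes; then $[i,m,i;x^\ast]$ is a non-unit isotropy element lying in the isotropy bisection $Z(i,m,i)$, witnessing non-effectiveness. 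Making this compactness argument and the length-comparison reductions precise is the bulk of the work, while the remaining manipulations are routine applications of the relation~\eqref{eq:eqrel} together with the formulas for $\kappa$ and $\varphi$.
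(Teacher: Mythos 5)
Your proof is correct, but it necessarily takes a different route from the paper, because the paper contains no proof of this proposition at all: it is quoted directly from \cite[Theorem~18.8]{EP17}, and Exel--Pardo's own proof proceeds by specializing their general characterization of essential principality for tight groupoids of arbitrary self-similar graphs to the Katsura action. Your argument is instead a self-contained, elementary verification tailored to this particular $\ZZ$-action, and every step checks out: the reduction of effectiveness to the statement that every basic bisection $Z(\mu,m,\nu)$ contained in the isotropy bundle consists of units (legitimate since these sets form a basis); the product formula $\varphi(m,x\vert_t)=m B_{x\vert_t}/A_{x\vert_t}$, valid exactly when $\kappa_m$ fixes $x\vert_t$, which is the correct bridge to the arithmetic in~(2)(b); the three-way length comparison for $\mu\kappa_m(y)=\nu y$; and, crucially, the compactness of $Z(i)$ (row-finiteness makes it an inverse limit of finite sets, and the sets $\left\{y\in Z(i):B_{y\vert_t}=0\right\}$ form an increasing open cover), which converts the failure of the uniform bound $T$ in~(2)(b) into a single path $x^\ast$ with $B_{x^\ast\vert_t}\neq 0$ for all $t$. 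The resulting element $[i,m,i;x^\ast]$ is then genuinely a non-unit because, by the explicit description of the equivalence relation $\sim$ in Subsection~\ref{subsec:tightgrpd}, it could only equal the unit $[i,0,i;x^\ast]$ if $\varphi(m,x^\ast\vert_j)=0$ for some $j$, while $\varphi(m,x^\ast\vert_j)=mB_{x^\ast\vert_j}/A_{x^\ast\vert_j}$ never vanishes. What your direct route buys is transparency: one sees exactly how each clause of~(2) corresponds to a type of open isotropy, without invoking the weakly/strongly fixed path machinery of~\cite{EP17}.

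One terminological slip you should fix: in this paper's conventions (paths run forwards, unlike in~\cite{EP17}), what your unequal-length case and your counterexample construction actually produce is a cycle $\alpha$ without \emph{exit} --- every vertex on $\alpha$ emits only the next edge of $\alpha$, which is exactly what forces $Z(r(\nu))=\{\alpha^\infty\}$ --- and Condition~(L) here is the assertion that every cycle has an exit. Calling it a cycle ``without entry'' is Exel--Pardo's reversed-convention terminology; a cycle merely lacking entries would not make the cylinder a singleton, so the word matters even though your argument itself establishes and uses the correct property.
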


The premise in $(2b)$ above is fairly strong, as it stipulates that $\kappa_m(x) = x$ for all $x \in Z(i)$. In many cases this will not happen for any vertex $i$, which means $(2b)$ is trivially satisfied. One such case is the following.  

\begin{corollary}[{\cite[Corollary~18.9]{EP17}}]\label{cor:sufficientEffective} 
If $E_A$ satisfies Condition~(L) and for each $1 \leq i \leq N$ there exists $x \in Z(i)$ such that $B_{x \vert_t} \neq 0$ for all $t \in \mathbb{N}$ and  $\displaystyle \lim_{t\to \infty} \frac{B_{x \vert_t}}{A_{x \vert_t}} = 0$, then $\mathcal{G}_{A,B}$ is effective.
\end{corollary}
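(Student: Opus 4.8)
The plan is to derive the corollary directly from the preceding effectiveness characterization (\cite[Theorem~18.8]{EP17}) by verifying its two conditions $(2a)$ and $(2b)$. Condition $(2a)$, that $E_A$ satisfies Condition~(L), is handed to us verbatim in the hypothesis, so all the work goes into $(2b)$. As the remark preceding the statement already hints, I would not attempt to produce the integer $T$ in the conclusion of $(2b)$; instead I would show that the premise of $(2b)$ is never met, so that the implication holds vacuously. Concretely, the goal is to prove that for every vertex $i$ and every $m \in \ZZ \setminus \{0\}$ there exist some $x \in Z(i)$ and some $t \in \NN$ with $m \frac{B_{x \vert_t}}{A_{x \vert_t}} \notin \ZZ$.

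For the key step, fix $1 \le i \le N$ and $m \neq 0$, and take the distinguished infinite path $x \in Z(i)$ furnished by the hypothesis, so that $B_{x \vert_t} \neq 0$ for all $t$ and $\frac{B_{x \vert_t}}{A_{x \vert_t}} \to 0$ as $t \to \infty$. Suppose, toward a contradiction, that this $x$ satisfies the premise, i.e.\ $m \frac{B_{x \vert_t}}{A_{x \vert_t}} \in \ZZ$ for all $t$. Since $m$ is a fixed integer and $\frac{B_{x \vert_t}}{A_{x \vert_t}} \to 0$, the sequence $m \frac{B_{x \vert_t}}{A_{x \vert_t}}$ is integer-valued and converges to $0$, hence is eventually zero: there is $T$ with $m \frac{B_{x \vert_t}}{A_{x \vert_t}} = 0$ for all $t \ge T$. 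Because $m \neq 0$ and $A_{x \vert_t} \neq 0$ (the ratio being well-defined, exactly as written in the hypothesis), this forces $B_{x \vert_t} = 0$ for $t \ge T$, contradicting $B_{x \vert_t} \neq 0$ for all $t$. Thus no such $x$ can satisfy the premise, so the premise of $(2b)$ fails for every $i$ and every $m \neq 0$, and $(2b)$ holds vacuously. Feeding $(2a)$ and $(2b)$ back into the Proposition then yields that $\mathcal{G}_{A,B}$ is effective.

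The argument is short, and the points requiring care are bookkeeping rather than genuine obstacles. The first is the logical shape of $(2b)$: its premise is universally quantified over $x \in Z(i)$, so a single witnessing path suffices to refute it, which is precisely what the distinguished $x$ supplies. The second, which is the real crux, is the elementary fact that an integer-valued sequence tending to $0$ must vanish from some index on; together with $m \neq 0$ and the non-vanishing of $A_{x \vert_t}$, this is what converts the analytic hypothesis $\frac{B_{x \vert_t}}{A_{x \vert_t}} \to 0$ into the arithmetic conclusion $B_{x \vert_t} = 0$. I would also confirm the convention that $A_{x \vert_t}$ is the (positive) quantity associated to the subpath, so that the displayed ratio is genuinely well-defined and $\frac{B_{x \vert_t}}{A_{x \vert_t}} = 0$ is equivalent to $B_{x \vert_t} = 0$; with that in hand the proof is complete.
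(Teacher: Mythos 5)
Your proof is correct and follows the same route the paper takes: the hypothesis supplies, for each vertex $i$, a witnessing path $x \in Z(i)$ that refutes the (universally quantified) premise of condition $(2b)$ in the effectiveness characterization, so $(2b)$ holds vacuously and effectiveness follows — exactly the reasoning the paper sketches in the remark preceding the corollary ("the premise in $(2b)$ \ldots will not happen for any vertex $i$, which means $(2b)$ is trivially satisfied"). Your key step, that an integer-valued sequence converging to $0$ is eventually $0$ and hence forces $B_{x \vert_t} = 0$ against the hypothesis, is the same elementary argument underlying the cited Corollary~18.9 of~\cite{EP17}.
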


The following is a class of examples to which Corollary~\ref{cor:sufficientEffective} applies.

\begin{example}
If the matrices $A, B$ satisfy  $A_{i,i} \geq 2$ and $0 < \left| B_{i,i} \right| < A_{i,i}$ for all $1 \leq i \leq N$, then $\mathcal{G}_{A,B}$  is effective. If $A$ is irreducible it suffices that this condition holds for a single vertex $i$.
\end{example}

The following remark illustrates that the class of examples above is already fairly rich.

\begin{remark}
It suffices to consider matrices $A, B$ satisfying $A_{i,i} \geq 2$ and $B_{i,i} = 1$ for each $1 \leq i \leq N$ with $A$ irreducible for $\mathcal{O}_{A,B}$ to exhaust all Kirchberg algebras up to stable isomorphism~\cite[Proposition~4.5]{KatIV}. 
\end{remark}




Next we observe that the Katsura--Exel--Pardo groupoids that satisfy the assumptions of the AH~conjecture are purely infinite (in the sense of~\cite[Definition~4.9]{MatTFG}). This means that the index map is surjective~\cite[Theorem~5.2]{MatTFG}, so we only need to establish Property~TR in order to prove that the AH~conjecture hold for these groupoids.

\begin{proposition}\label{prop:PI}
Let $N < \infty$ and assume that $\mathcal{G}_{A,B}$ is Hausdorff, effective and minimal. Then $\mathcal{G}_{A,B}$ is purely infinite.
\end{proposition}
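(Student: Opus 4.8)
The plan is to deduce pure infiniteness of $\mathcal{G}_{A,B}$ from that of the embedded graph groupoid $\mathcal{G}_{E_A}\cong\mathcal{G}_{A,0}$, and then transfer it upward along the canonical embedding recalled above. Recall from \cite[Definition~4.9]{MatTFG} that being purely infinite means every nonempty clopen $U\subseteq\mathcal{G}^{(0)}$ is properly infinite, i.e.\ there are compact open bisections realizing two disjoint copies of $U$ inside $U$ (equivalently $U\oplus U\precsim U$ in the comparison order $\precsim$, where $V\precsim W$ iff some compact open bisection has source $V$ and range contained in $W$; here Hausdorffness is used so that these compact open bisections are clopen). Since the étale embedding $\mathcal{G}_{A,0}\hookrightarrow\mathcal{G}_{A,B}$ preserves the unit space and has open image, every compact open bisection of $\mathcal{G}_{E_A}$ is also one of $\mathcal{G}_{A,B}$. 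Hence any paradoxical decomposition of a clopen subset of the common unit space $E_A^\infty$ witnessed in $\mathcal{G}_{E_A}$ is automatically witnessed in $\mathcal{G}_{A,B}$, and it suffices to prove that $\mathcal{G}_{E_A}$ is purely infinite.

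I would first translate the hypotheses into graph-combinatorial language. By Proposition~\ref{prop:minimal}, minimality of $\mathcal{G}_{A,B}$ means $E_A$ is cofinal, and effectiveness forces $E_A$ to satisfy Condition~(L). Together with $N<\infty$ and the standing assumption that $A$ has no zero rows (so every vertex emits), cofinality forces a unique \emph{terminal} strongly connected component $S$ of $E_A$: every vertex reaches $S$, no edge leaves $S$, and by Condition~(L) the cycles in $S$ have exits, so $S$ contains a vertex $w$ with two distinct outgoing edges, whose ranges necessarily lie in $S$. This configuration is the combinatorial heart of the argument.

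Next I would build the paradoxical decompositions on cylinder sets, since these form a basis of compact opens for $E_A^\infty$ and every nonempty clopen is a finite disjoint union of them. For a vertex $v\in S$, strong connectivity yields a return path $v\to w\to v$ through each of the two edges at $w$; the resulting return paths $\beta_1,\beta_2$ at $v$ have disjoint cylinders $Z(\beta_1),Z(\beta_2)\subseteq Z(v)$, and prepending $\beta_i$ gives a compact open bisection $Z(v)\to Z(\beta_i)$, exhibiting $Z(v)$ as properly infinite. For a general vertex $v$, a path $v\to w_*\in S$ gives $Z(w_*)\precsim Z(v)$ by prefixing, while decomposing $Z(v)$ into the finitely many first-passage cylinders into $S$ and removing prefixes gives $Z(v)\precsim Z(w_*)$; feeding these into the chain $Z(v)\oplus Z(v)\precsim Z(w_*)\oplus Z(w_*)\precsim Z(w_*)\precsim Z(v)$, whose middle step is proper infiniteness of $Z(w_*)$, shows every $Z(v)$, and then every nonempty clopen, is properly infinite. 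Hence $\mathcal{G}_{E_A}$, and therefore $\mathcal{G}_{A,B}$, is purely infinite.

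The main obstacle is this graph-groupoid step under the mild hypotheses cofinal $+$ Condition~(L) (rather than full irreducibility of $A$), and in particular handling the transient vertices $v\notin S$, where there are no return paths and so $Z(v)$ admits no self-bisection of the naive form. The comparison calculus above circumvents this, but one must verify the elementary properties of $\precsim$ that legitimize the final chain: additivity over disjoint unions, $\sim$-invariance of proper infiniteness, and the realization of finitely many disjoint copies inside a properly infinite set. Alternatively, this entire step can simply be quoted from the known pure infiniteness of graph (SFT) groupoids, e.g.\ from the graph-groupoid machinery of \cite{NO20} together with \cite{MatTFG}.
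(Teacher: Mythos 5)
Your argument is correct, and its first step coincides with the paper's: both proofs reduce to the embedded graph groupoid via the observation that the étale embedding $\mathcal{G}_{A,0}\hookrightarrow\mathcal{G}_{A,B}$ is open and preserves the unit space, so any paradoxical decomposition witnessed by compact open bisections of $\mathcal{G}_{A,0}$ is automatically witnessed in $\mathcal{G}_{A,B}$. The difference is in the second step. The paper disposes of it in one line by citing pure infiniteness of SFT-groupoids from~\cite[Lemma~6.1]{MatTFG}, whereas you prove it by hand under the hypotheses actually available here, namely that $E_A$ is a finite graph with no sinks which is cofinal (Proposition~\ref{prop:minimal}) and satisfies Condition~(L): you isolate the unique terminal strongly connected component $S$, get proper infiniteness of $Z(v)$ for $v\in S$ from two return paths diverging at the exit vertex $w$, and use the subequivalence calculus to pull this back to transient vertices and then to arbitrary nonempty clopen sets. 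This buys something real: Matui's Lemma~6.1 is proved for \emph{irreducible} non-permutation matrices, while Proposition~\ref{prop:PI} does not assume $A$ essential, so cofinality need not upgrade to irreducibility (sources are allowed); your treatment of transient vertices covers exactly this case, which the paper's citation glosses over (alternatively one can quote the graph-groupoid results of~\cite{NO20}, as you note). The price is the handful of elementary properties of $\precsim$ that you flag without proving (additivity over disjoint unions, $\sim$-invariance, and that a properly infinite set dominates finitely many disjoint copies of itself); these are standard and live in the same part of~\cite{MatTFG} as Definition~4.9, so deferring them is legitimate. Two facts you use implicitly do hold and deserve a sentence in a polished write-up: cofinality forces every infinite path to enter $S$ (otherwise a vertex of $S$ could not reach that path, since no edges leave $S$), and consequently there are no cycles outside $S$, which is what makes your first-passage decomposition of $Z(v)$ finite.
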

\begin{proof}
Since the SFT-groupoid $\mathcal{G}_A \cong \mathcal{G}_{A,0}$ is an open ample subgroupoid of $\mathcal{G}_{A,B}$, the pure infiniteness of $\mathcal{G}_{A,B}$ follow from that of $\mathcal{G}_A$, which is established in~\cite[Lemma~6.1]{MatTFG}.
\end{proof}

As in~\cite{NO20} 
we make the following ad hoc definition for brevity. 

\begin{definition}
We say that the matrices $A, B$ satisfy the \emph{AH~criteria} if $N < \infty$ and $\mathcal{G}_{A,B}$ is Hausdorff, effective and minimal. 
\end{definition}

A large class of pairs of matrices satisfying the AH~criteria are given in the following example.

\begin{example}
Let $N \in \mathbb{N}$ and let $A \in M_N(\ZZ_+)$, $B \in M_N(\ZZ)$. Assume that $A$ is irreducible and that $B_{i,j} = 0$ if and only if $A_{i,j} = 0$. Assume further that there exists some $i$ between $1$ and $N$ such that $\left| B_{i,i} \right| < A_{i,i} \geq 2$. Then the matrices $A, B$ satisfy the AH~criteria.
\end{example}


\section{The homology of $\mathcal{G}_{A,B}$}\label{sec:hom}

In this section, we will describe the homology groups of the Katsura--Exel--Pardo groupoids, following~\cite{Ort}. Although the action is assumed to be pseudo-free throughout in~\cite{Ort}, most of what we need here also work without this assumption, with one notable exception which is adressed in Equation~\eqref{eq:Homn} below.

\begin{assumption}
We assume throughout that $N < \infty$ and that $\mathcal{G}_{A,B}$ is Hausdorff. 
\end{assumption}
%




\subsection{The kernel subgroupoid $\mathcal{H}_{A,B}$}
Similarly to the canonical cocycle on an SFT-groupoid (see page 37 of~\cite{MatHom}) we can define a cocycle\footnote{I.e.\ a continuous groupoid homomorphism into a group.} $c \colon \mathcal{G}_{A,B} \to \ZZ$ on a Katsura--Exel--Pardo groupoid by setting \[c\left( [\mu, m, \nu;x]  \right) = \left| \mu \right| - \left|  \nu \right|. \]
This is well-defined since the difference $\left| \mu \right| - \left|  \nu \right|$ is preserved under $\sim$. Now define 
\[ \mathcal{H}_{A,B} \coloneqq \ker(c) = \left\{ [\mu, m, \nu;x] \in \mathcal{G}_{A,B} \mid \left| \mu \right| = \left|  \nu \right| \right\},  \]
which is a clopen ample subgroupoid of $\mathcal{G}_{A,B}$. In contrast to the case of graph groupoids, this kernel is generally not an AF-groupoid (it need not be principal), but it is still key to computing the homology of $\mathcal{G}_{A,B}$. 

Next, for each $n \in \mathbb{N}$ we define the open subgroupoid
\[\mathcal{H}_{A,B,n} \coloneqq \left\{ [\mu, m, \nu;x] \in \mathcal{G}_{A,B} \mid \left| \mu \right| = \left|  \nu \right| = n \right\} \subseteq \mathcal{H}_{A,B}.  \] 
Observe that $\mathcal{H}_{A,B,n}  \subseteq \mathcal{H}_{A,B,n+1} $ by~\eqref{eq:eqrel} and that $\cup_{n=1}^\infty \mathcal{H}_{A,B,n} = \mathcal{H}_{A,B}$. Hence 
\begin{equation}\label{eq:limitH}
H_i \left(\mathcal{H}_{A,B} \right) \cong \varinjlim\left(H_i \left(\mathcal{H}_{A,B,n} \right), H_i(\iota_n) \right)
\end{equation}
by~\cite[Proposition~4.7]{FKPS}, where $\iota_n$ is the inclusion map.

It follows from the proof of~\cite[Proposition~2.3]{Ort} that if $\mu,\nu \in E_A^n$ and $r(\mu) = r(\nu)$, then
\begin{equation*}
\left[1_{Z(\mu)} \right] = \left[1_{Z(\nu)} \right] \in H_0\left(\mathcal{H}_{A,B,n} \right)
\end{equation*}
and that we have
\begin{equation}\label{eq:H0n}
H_0\left(\mathcal{H}_{A,B,n} \right) = \spann\left\{\left[1_{Z(\mu)} \right] \mid \mu \in E_A^n  \right\} \cong \ZZ^N,
\end{equation}
even without the assumption of pseudo-freeness. The isomorphism in~\eqref{eq:H0n} is given by mapping $\left[1_{Z(\mu)} \right]$  to $1_{r(\mu)}$, where we by $1_w \in \ZZ^N \cong \oplus_{v \in E_A^0} \ZZ$ for $w \in E_A^0$ mean the tuple with~$1$ in the $w$'th coordinate and $0$ elsewhere.

As for $H_1\left(\mathcal{H}_{A,B,n} \right)$, for paths $\mu$ and $\nu$ as above it similarly follows from the proof of~\cite[Proposition~2.4]{Ort} that 
\begin{align}\label{eq:indicatorHomology}
\left[1_{Z(\mu,m, \mu)} \right] &= \left[1_{Z(\mu,m, \nu)} \right] = \left[1_{Z(\nu,m, \nu)} \right] \in H_1\left(\mathcal{H}_{A,B,n} \right), \nonumber \\
\left[1_{Z(\mu,m, \mu)} \right] &= m \left[1_{Z(\mu,1, \mu)} \right] \in H_1\left(\mathcal{H}_{A,B,n} \right),
\end{align}
and hence 
\begin{equation}\label{eq:H1span}
H_1\left(\mathcal{H}_{A,B,n} \right) = \spann\left\{\left[1_{Z(\mu,1, \mu)} \right] \mid \mu \in E_A^n  \right\}.
\end{equation}
If the action is pseudo-free, then
\[H_1\left(\mathcal{H}_{A,B,n} \right) \cong \ZZ^N\]
by identifying $\left[1_{Z(\mu,1, \mu)} \right]$ with $1_{r(\mu)}$.

However, when the action is not pseudo-free, we need to take care. The group $H_1\left(\mathcal{H}_{A,B,n} \right)$ will still be a free abelian group, but its rank may be smaller than $N$. To explain this phenomenon, let us call a vertex $i \in \left\{ 1, 2, \ldots, N \right\}$  a \emph{$B$-sink} if $B_{i,j} = 0$ for all $j$ with $A_{i,j} > 0$. Any path passing through a $B$-sink will be strongly fixed\footnote{Meaning that $\kappa_m(\mu) = \mu$ and $\varphi(m, \mu) = 0$ (\cite[Definition~5.2]{EP17}).} by the action. To see the impact this has on $H_1\left(\mathcal{H}_{A,B,n} \right)$, suppose that $i$ is a $B$-sink and that $\mu \in E_A^n$ has $r(\mu) = i$. Then we have the counter-intuitive equality
\begin{equation}\label{eq:degenerateBisection}
Z(\mu, 1, \mu) = Z(\mu, 0, \mu) \subseteq \mathcal{G}_{A,B}^{(0)},
\end{equation}
since for any $x = \mu e z \in Z(\mu)$ with $e \in r(\mu) E_A^1$ we have 
\[ (\mu, 1, \mu; x) \sim (\mu \kappa_1(e), \varphi(1,e), \mu e; x) = (\mu e, 0, \mu e; x) \sim (\mu, 0, \mu; x).   \] 
This in turn means that $\left[1_{Z(\mu,1, \mu)} \right]  = 0 \in H_1\left(\mathcal{H}_{A,B,n} \right)$, so this part of $H_1\left(\mathcal{H}_{A,B,n} \right)$ collapses. More generally, the same will happen to any path $\mu \in E_A^n$ for which every infinite path~${x \in Z(r(\mu))}$ passes through a $B$-sink. To have a name for vertices for which this does not happen, let us define a vertex $1 \leq i \leq N$ to be a \emph{$B$-regular} if there exists a path $\mu$, containing no $B$-sinks, starting at $i$ which connects to a cycle that contain no $B$-sinks. This is the same as saying that there is some infinite path starting at $i$ which does not pass through any $B$-sink. Bisections $Z(\mu,1, \mu)$ with $r(\mu)$ $B$-regular behave just like in the pseudo-free case, while those with $r(\mu)$ not $B$-regular vanish in $H_1\left(\mathcal{H}_{A,B,n} \right)$ as explained above.  Let $R_B$ denote the number of $B$-regular vertices. Then we have that 
\begin{equation}\label{eq:Homn}
H_1\left(\mathcal{H}_{A,B,n} \right) = \spann\left\{\left[1_{Z(\mu,1, \mu)} \right] \mid \mu \in E_A^n \text{ with } r(\mu) \ B\text{-regular}  \right\} \cong \ZZ^{R_B}. 
\end{equation}
This particular description (as opposed to~\eqref{eq:H1span}) is only used in the proof of Lemma~\ref{lem:HnTR}.

\begin{remark}
By viewing the matrices $A$ and $B$ as endomorphisms of $\ZZ^N$ (via left multiplication) we may consider the inductive limits
\begin{equation}\label{eq:limitA}
\ZZ_A \coloneqq \varinjlim\left(\ZZ^N, A \right) \quad \text{and} \quad  \ZZ_B \coloneqq \varinjlim\left(\ZZ^N, B \right).  
\end{equation}
Let $\phi_{n, \infty}^A  \colon Z^N \to Z_A$ and $\phi_{n, \infty}^B  \colon Z^N \to Z_B$ denote the canonical maps into the inductive limits. Propositions~2.3~\&~2.4 in~\cite{Ort} remain valid without pseudo-freeness and they show that the inductive limits in~\eqref{eq:limitH} for $i = 0$ and $i = 1$ turn into the limits in~\eqref{eq:limitA}, respectively. This means that 
\[H_0\left(\mathcal{H}_{A,B} \right) \cong \ZZ_A \quad  \text{and} \quad H_1\left(\mathcal{H}_{A,B} \right) \cong \ZZ_B,  \]
where the isomorphisms are given by 
\[\left[1_{Z(\mu)} \right] \mapsto \phi_{n, \infty}^A \left(1_{r(\mu)} \right)  \quad \text{and} \quad \left[1_{Z(\mu,1, \mu)} \right] \mapsto \phi_{n, \infty}^B \left(1_{r(\mu)} \right),  \]
respectively, for $\mu \in E_A^n$. This is still compatible with Equation~\eqref{eq:Homn}, because if~$v$ is a non-$B$-regular vertex, then~$1_v$ is eventually annihilated in the inductive limit~$\ZZ_B$. What does not necessarily hold, however, without pseudo-freeness, is~\cite[Proposition~2.2]{Ort} which says that $H_n \left( \mathcal{H}_{A,B} \right) = 0$ when $n \geq 2$. But this part is not needed for the results in the present paper.
\end{remark}

Let $\mathcal{G}_{A,B} \times_c \ZZ$ denote the skew product associated to the cocycle $c$ (see~\cite[Subsection~2.5]{NO20}). 

\begin{lemma}\label{lem:full}
The clopen set $E_A^\infty \times \{0\} \subseteq  \left( \mathcal{G}_{A,B} \times_c \ZZ \right)^{(0)}$ is $\left(\mathcal{G}_{A,B} \times_c \ZZ \right)$-full.
\end{lemma}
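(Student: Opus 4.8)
The plan is to reduce the fullness of $U := E_A^\infty \times \{0\}$ to a statement purely about the cocycle $c$. Recall that $U$ is $(\mathcal{G}_{A,B} \times_c \ZZ)$-full precisely when every unit $(z, m) \in E_A^\infty \times \ZZ$ lies in the saturation of $U$, i.e.\ when there is an element $h$ of the skew product with $s(h) = (z,m)$ and $r(h) \in U$. Writing $h = (g, m)$ with $g \in \mathcal{G}_{A,B}$, the condition $s(h) = (z,m)$ forces $s(g) = z$, while $r(h) = (r(g), m + c(g)) \in E_A^\infty \times \{0\}$ forces $c(g) = -m$. Thus the lemma is equivalent to the assertion that for every $z \in E_A^\infty$ the cocycle takes all integer values on the elements based at $z$; that is, $c\left(\{g \in \mathcal{G}_{A,B} : s(g) = z\}\right) = \ZZ$ (a statement insensitive to the sign conventions of the skew product, since passing to inverses swaps source and range and negates $c$). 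I would prove exactly this.

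Fix $z \in E_A^\infty$. An element with source $z$ has the form $g = [\mu, k, \nu; z]$ where $\nu$ is a prefix of $z$, say $z = \nu y$, and $r(\mu) = r(\nu)$; its cocycle value is $c(g) = |\mu| - |\nu|$. For a non-positive target $-m \le 0$ I take $\nu$ to be the prefix of $z$ of length $m$, let $\mu = r(\nu)$ be the trivial path at that vertex, and $k = 0$, giving $g = [r(\nu), 0, \nu; z]$ with $c(g) = 0 - m = -m$. This disposes of all non-positive values immediately.

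The substance is to realize every positive value $L > 0$, which requires producing, for some prefix $z\vert_\ell$ of $z$ ending at a vertex $v := r(z\vert_\ell)$, a path $\mu$ of length $\ell + L$ with $r(\mu) = v$; then $g = [\mu, 0, z\vert_\ell; z]$ has $c(g) = (\ell + L) - \ell = L$. Here I would use that $N < \infty$: the infinite path $z$ visits some vertex $v$ infinitely often, so $z$ admits prefixes ending at $v$ of arbitrarily large length, and moreover the return loops of $z$ at $v$ exhibit $v$ as lying on a cycle, hence in a non-trivial strongly connected component. The key graph-theoretic input is then the standard fact that in a strongly connected digraph containing a cycle, a fixed vertex $v$ admits incoming paths of every sufficiently large length: walks into $v$ from the various cyclic classes of the component realize all residues modulo the period, and prepending closed walks at the appropriate source fills in all large lengths in each residue class. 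Granting this, I fix $L$, choose a prefix $z\vert_\ell$ ending at $v$ with $\ell$ large enough that $\ell + L$ exceeds this threshold, take $\mu$ to be an incoming path to $v$ of length $\ell + L$, and assemble $g$ as above.

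I expect the main obstacle to be precisely this exact-length control: unlike the non-positive case, where truncating a prefix of $z$ works with no conditions, realizing a positive cocycle forces me to lengthen backwards by the exact amount $\ell + L$, and when the period $p$ of the strongly connected component of $v$ exceeds $1$ this cannot be done by reading $z$ alone, since all prefix-to-prefix differences are then multiples of $p$. Routing $\mu$ through the other cyclic classes of the component is what supplies the missing residues, and the one point deserving care is to confirm that every residue modulo $p$ is attained by some path ending at $v$ --- which holds because each cyclic class is non-empty and the component is strongly connected.
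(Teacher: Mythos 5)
Your proposal is correct, and its skeleton is the same as the paper's: reduce fullness to the statement that the cocycle $c$ is surjective on each source fibre, dispose of one sign of values by truncating prefixes of $z$ (no hypotheses needed), and handle the other sign by using finiteness of $E_A$ to find a vertex $v$ that $z$ visits infinitely often, hence lies on a cycle, and then producing incoming paths to $v$ of a prescribed length. The only real divergence is in how that last step is justified. You invoke the period/cyclic-class structure of the strongly connected component of $v$: all residues modulo the period are realized by paths entering $v$ from the various classes, and closed walks fill in all sufficiently large lengths in each residue class. This is valid, but it is heavier than what the paper does, and it forces the ``choose $\ell$ so that $\ell+L$ exceeds the threshold'' bookkeeping. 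The paper's device is simpler: given a cycle $\sigma$ through $v$ of length $p$ and any target length $L\geq 1$, start at the vertex of $\sigma$ lying $L$ steps before $v$ (winding backwards around the cycle) and follow $\sigma$ forward for $L$ steps; this produces a path ending at $v$ of length \emph{exactly} $L$, using only the edges of the one cycle. In particular, the obstacle you single out --- that exact-length control fails when the period exceeds $1$ --- only arises if you insist that the incoming path start at a prescribed vertex or be assembled from full return loops of $z$; since $\mu$ may start anywhere, winding partway along the cycle already supplies every residue, and no threshold or strong-connectivity input is needed.
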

\begin{proof}
The same proof as for SFT-groupoids works here (see~\cite[Lemma~6.1]{FKPS} for a more general result). Let $(x, k) \in E_A^\infty \times \ZZ = \left( \mathcal{G}_{A,B} \times_c \ZZ \right)^{(0)}$ be given. 
 If $k < 0$, then the groupoid element $\left[x \vert_{-k}, 0, r\left(x \vert_{-k} \right); x_{[-k+1, \infty]}\right] \in \mathcal{G}_{A,B} \times_c \ZZ$ has range $(x, k)$ and source~$(x_{[-k+1, \infty]}, 0)$, which shows that $E_A^\infty \times \{0\}$ meets the $\left(\mathcal{G}_{A,B} \times_c \ZZ \right)$-orbit of $(x, k)$. In the case that $k > 0$ we can, since $E_A$ is a finite graph without sinks, find an index~${n \in \mathbb{N}}$ for which~$r(x_n)$ supports a cycle. By concatenating along this cycle we can find a path~${\nu \in E_A^*}$ with $r(\nu) = r(x_n)$ and $\left| \nu \right| = n + k$. And then the element $\left[x \vert_{n}, 0, \nu; \nu x_{[n+1, \infty]}\right]$ has range~$(x, k)$ and source $(\nu x_{[n+1, \infty]}, 0) \in E_A^\infty \times \{0\}$.
\end{proof}

Recall that
\[\mathcal{H}_{A,B} \cong \left(\mathcal{G}_{A,B} \times_c \ZZ \right) \vert_{E_A^\infty \times \{0\}}\]
via the map
\[ [\mu, m, \nu;x] \mapsto \left([\mu, m, \nu;x], 0 \right).   \] 
Composing this with the inclusion of the restriction we obtain an embedding~$\iota$ of~$\mathcal{H}_{A,B}$ into the skew product $\mathcal{G}_{A,B} \times_c \ZZ$. Lemma~\ref{lem:full} says that $\mathcal{H}_{A,B}$ is Kakutani equivalent to $\mathcal{G}_{A,B} \times_c \ZZ$ from which we have the following consequence (by~\cite[Lemma~4.3]{FKPS}).

\begin{proposition}\label{prop:Kakeq}
The embedding $\iota \colon \mathcal{H}_{A,B} \to \mathcal{G}_{A,B} \times_c \ZZ$ induces isomorphisms \[H_i\left( \mathcal{H}_{A,B} \right) \cong H_i\left( \mathcal{G}_{A,B} \times_c \ZZ \right)\] for each $i \geq 0$.
\end{proposition}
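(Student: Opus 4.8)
The plan is to observe that the proposition is a formal consequence of Lemma~\ref{lem:full} together with the homological invariance of Kakutani equivalence, so the work lies in correctly identifying the relevant groupoids and then invoking the appropriate cited result.

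First I would make precise the claim that $\iota$ realizes $\mathcal{H}_{A,B}$ as the restriction of the skew product to the clopen set $E_A^\infty \times \{0\}$. The unit space of $\mathcal{G}_{A,B} \times_c \ZZ$ is $E_A^\infty \times \ZZ$, and an element $\left([\mu, m, \nu; x], k \right)$ has source $(x, k)$ and range $\left(r([\mu, m, \nu; x]), k + c([\mu, m, \nu; x]) \right)$. Requiring both endpoints to lie in $E_A^\infty \times \{0\}$ forces $k = 0$ and $c([\mu, m, \nu; x]) = \left| \mu \right| - \left| \nu \right| = 0$; the latter is exactly the condition $\left| \mu \right| = \left| \nu \right|$ defining $\mathcal{H}_{A,B}$. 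Hence the map $[\mu, m, \nu; x] \mapsto \left([\mu, m, \nu; x], 0 \right)$ is a groupoid isomorphism of $\mathcal{H}_{A,B}$ onto $\left(\mathcal{G}_{A,B} \times_c \ZZ \right)\vert_{E_A^\infty \times \{0\}}$, and $\iota$ is precisely this isomorphism followed by the inclusion of the restriction.

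Next, Lemma~\ref{lem:full} guarantees that $E_A^\infty \times \{0\}$ is full in $\mathcal{G}_{A,B} \times_c \ZZ$, that is, it meets every orbit. This is exactly the hypothesis under which the inclusion of a restriction to a full clopen subset of the unit space induces isomorphisms on all homology groups, as established in~\cite[Lemma~4.3]{FKPS}. Applying that result to the inclusion $\left(\mathcal{G}_{A,B} \times_c \ZZ \right)\vert_{E_A^\infty \times \{0\}} \hookrightarrow \mathcal{G}_{A,B} \times_c \ZZ$ and precomposing with the isomorphism of the previous paragraph yields the desired isomorphisms $H_i\left(\mathcal{H}_{A,B} \right) \cong H_i\left(\mathcal{G}_{A,B} \times_c \ZZ \right)$ for every $i \geq 0$.

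The argument presents no genuine obstacle: all the analytic content resides in Lemma~\ref{lem:full} (already proved) and in the homological invariance of Kakutani equivalence (cited from~\cite{FKPS}). The only point requiring minor care is the verification in the first step that the restriction to $E_A^\infty \times \{0\}$ produces exactly $\mathcal{H}_{A,B}$, rather than a larger or smaller subgroupoid, and that the map written down is a homeomorphism compatible with the étale structure; both follow directly from the explicit description of the skew product and the cocycle $c$.
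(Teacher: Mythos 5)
Your proposal is correct and follows essentially the same route as the paper: identify $\mathcal{H}_{A,B}$ with the restriction $\left(\mathcal{G}_{A,B} \times_c \ZZ \right)\vert_{E_A^\infty \times \{0\}}$, invoke Lemma~\ref{lem:full} for fullness of that clopen set, and apply~\cite[Lemma~4.3]{FKPS} to conclude the homology isomorphisms. The only difference is that you spell out the source/range verification that the paper simply states as a recollection, which is a harmless (and correct) elaboration.
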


\subsection{A long exact sequence in homology}

From~\cite[Proposition~6.1]{NO20} 
applied to the cocycle $c \colon \mathcal{G}_{A,B} \to \ZZ$ we obtain the following long exact sequence in homology: 
\begin{equation}\label{eq:LES}
\begin{tikzcd}[column sep = 2.7em]
\cdots \arrow{r} &[-20pt]    H_2(\mathcal{G}_{A,B}) \arrow{r}{\partial_2} &[-7pt] H_1(\G_{A,B} \times_c \ZZ) \arrow{r}{\color{red}  \id - H_1(\rho_\bullet)} &   H_1(\G_{A,B} \times_c \ZZ) \arrow{r}{\color{Aquamarine}H_1(\pi_\bullet)} & \textcolor{Aquamarine}{(a)}  \\ 
\textcolor{Aquamarine}{(a)} \arrow{r} &  H_1(\mathcal{G}_{A,B})    \arrow{r}{\color{blue} \partial_1} & H_0(\G_{A,B} \times_c \ZZ) \arrow{r}{\color{Sepia} \id - H_0(\rho_\bullet)} &   H_0(\G_{A,B} \times_c \ZZ) \arrow{r}{H_0(\pi_\bullet)} & H_0(\mathcal{G}_{A,B})  \arrow{r} &[-20pt] 0.
\end{tikzcd}
\end{equation}
Consult~\cite[Section~6]{NO20}
for a description of the maps.
Appealing to Proposition~\ref{prop:Kakeq} we can replace $H_i\left( \mathcal{G}_{A,B} \times_c \ZZ \right)$ with $H_i\left( \mathcal{H}_{A,B} \right)$ and extract the following exact sequence from the one above: 
\begin{equation}\label{eq:exactSubsequence}
\begin{tikzcd}[column sep = 2.1em]
 H_1(\mathcal{H}_{A,B}) \arrow{r}{\color{red} \rho^1} &   H_1(\mathcal{H}_{A,B}) \arrow{r}{\color{Aquamarine} \Phi}  &  H_1(\mathcal{G}_{A,B})    \arrow{r}{\color{blue}\Psi} & H_0(\mathcal{H}_{A,B}) \arrow{r}{\color{Sepia}\rho^0} &   H_0(\mathcal{H}_{A,B}). 
\end{tikzcd}
\end{equation}  
The maps $\Phi$ and $\Psi$ are the unique maps satisfying 
\[ H_1(\pi_\bullet) \circ H_1(\iota)  = \Phi    \quad  \text{and} \quad  \partial_1 = H_0(\iota) \circ \Psi, \]
respectively. Similarly, the maps $\rho^i$ 
 are defined by 
\[ H_i(\iota) \circ \rho^i = \left( \id - H_i(\rho_\bullet) \right) \circ H_i(\iota) \quad  \text{for } i = 0,1.  \]

In the next section we are going to need explicit descriptions of the maps in~\eqref{eq:exactSubsequence}. This is provided in the lemmas below. Some of them are given in terms of ``prefixing an edge'' to path, and therefore we need to assume that the graph~$E_A$ 
has no sources.

\begin{assumption}
For the remainder of this section we assume that the matrix $A$ is essential.
\end{assumption}




\begin{lemma}\label{lem:PhiFormula}
The map $\Phi \colon H_1(\mathcal{H}_{A,B}) \to H_1(\mathcal{G}_{A,B})$ is given by
\[\Phi \left( \left[ 1_{Z(\mu, 1, \mu)} \right] \right) = \left[ 1_{Z(\mu, 1, \mu)} \right] = \left[ 1_{Z(r(\mu), 1, r(\mu))} \right] \in H_1(\mathcal{G}_{A,B}) \] 
 for $\left[ 1_{Z(\mu, 1, \mu)} \right] \in H_1(\mathcal{H}_{A,B})$. 
 In particular, $I(\alpha) = \Phi \left( I_\mathcal{H}(\alpha) \right) \in H_1(\mathcal{G}_{A,B})$ for~${\alpha \in \left\llbracket \mathcal{H}_{A,B} \right\rrbracket}$.
\end{lemma}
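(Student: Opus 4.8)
The plan is to unpack the definition of $\Phi$ from the long exact sequence and trace a generator $\left[ 1_{Z(\mu, 1, \mu)} \right]$ through the three maps $H_1(\iota)$, $H_1(\pi_\bullet)$, and the Kakutani identification. Recall that $\Phi$ is characterized by $\Phi \circ (\text{Kakutani iso}) = H_1(\pi_\bullet) \circ H_1(\iota)$, where $\iota$ embeds $\mathcal{H}_{A,B}$ as the restriction $\left( \mathcal{G}_{A,B} \times_c \ZZ \right) \vert_{E_A^\infty \times \{0\}}$ and $\pi_\bullet \colon \mathcal{G}_{A,B} \times_c \ZZ \to \mathcal{G}_{A,B}$ is the projection collapsing the $\ZZ$-coordinate. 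The key point is that all of these maps are induced on homology by continuous groupoid homomorphisms that send indicator functions of bisections to indicator functions of their images, so on the level of $H_1$ they map the class $\left[ 1_U \right]$ to $\left[ 1_{f(U)} \right]$ whenever $f$ is injective on $U$.

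\textbf{First I would} fix $\mu \in E_A^n$ and follow the bisection $Z(\mu, 1, \mu) \subseteq \mathcal{H}_{A,B}$ along the composite. Under the identification $\mathcal{H}_{A,B} \cong \left( \mathcal{G}_{A,B} \times_c \ZZ \right) \vert_{E_A^\infty \times \{0\}}$, the element $[\mu, 1, \mu; x]$ is sent to $\left( [\mu, 1, \mu; x], 0 \right)$, which lands in the skew product with trivial $\ZZ$-shift precisely because $c([\mu,1,\mu;x]) = |\mu| - |\mu| = 0$. Then $\pi_\bullet$ forgets the $\ZZ$-coordinate and returns $[\mu, 1, \mu; x] \in \mathcal{G}_{A,B}$. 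Since each of these maps is a homeomorphism onto its image on the compact open bisection $Z(\mu,1,\mu)$, the induced composite sends $\left[ 1_{Z(\mu,1,\mu)} \right]$ to $\left[ 1_{Z(\mu,1,\mu)} \right] \in H_1(\mathcal{G}_{A,B})$, giving the first claimed equality.

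\textbf{For the second equality} I would use the relations already established in Equation~\eqref{eq:indicatorHomology}, which hold in $H_1(\mathcal{H}_{A,B,n})$ and hence survive under $H_1(\iota_n)$ into $H_1(\mathcal{H}_{A,B})$, and I must check they persist in $H_1(\mathcal{G}_{A,B})$. Since $A$ is essential we may prefix an edge: writing $e\mu$ for a path with $r(e) = s(\mu)$, the bisection $Z(e\mu, 1, e\mu)$ maps to $Z(\mu,1,\mu)$ under the product with $1_{Z(e)}$ on each side, and iterating this telescoping back down to length zero yields $\left[ 1_{Z(\mu,1,\mu)} \right] = \left[ 1_{Z(r(\mu), 1, r(\mu))} \right]$ in $H_1(\mathcal{G}_{A,B})$. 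The groupoid relations in $\mathcal{G}_{A,B}$ (conjugation by $Z(e, 0, s(e))$-type bisections) provide exactly the boundary that identifies these classes, so no new information beyond the structure of $\mathcal{G}_{A,B}$ is needed.

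\textbf{The main obstacle} I anticipate is the bookkeeping of the Kakutani equivalence: one must verify that the isomorphism $H_1(\iota)$ of Proposition~\ref{prop:Kakeq} really does send the \emph{specific} generator $\left[ 1_{Z(\mu,1,\mu)} \right]$ to the corresponding skew-product class without an extraneous sign or shift, and that the restriction/inclusion does not introduce a correction term. Unwinding the definition of the index map then gives the final sentence almost for free: for a full bisection $U \subseteq \mathcal{H}_{A,B}$ we have $I_{\mathcal{H}}(\pi_U) = \left[ 1_U \right] \in H_1(\mathcal{H}_{A,B})$ and $I(\pi_U) = \left[ 1_U \right] \in H_1(\mathcal{G}_{A,B})$, so the compatibility $I = \Phi \circ I_{\mathcal{H}}$ on generators is just the statement that $\Phi$ is induced by the inclusion $\mathcal{H}_{A,B} \hookrightarrow \mathcal{G}_{A,B}$ at the level of indicator classes, which is exactly what the first part establishes.
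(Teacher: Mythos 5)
Your argument is correct and is exactly the ``straightforward'' proof the paper intends: since $\pi_\bullet \circ \iota \colon \mathcal{H}_{A,B} \to \mathcal{G}_{A,B}$ is the honest inclusion, the composite $\Phi = H_1(\pi_\bullet) \circ H_1(\iota)$ is induced at the chain level by extension by zero and hence acts as the identity on indicator classes of compact open bisections (which yields both the first equality and the compatibility $I = \Phi \circ I_{\mathcal{H}}$), while conjugation by path-prefixing bisections together with the boundary relation $\delta_2\left(1_{U \circ V}\right) = 1_U - 1_{U \cdot V} + 1_V$ of \cite[Lemma~7.3]{MatHom} yields the second equality, valid in $H_1(\mathcal{G}_{A,B})$ precisely because the conjugating bisections live outside $\mathcal{H}_{A,B}$. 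Two notational slips should be repaired before splicing this in: the conjugating bisection is $Z(e, 0, r(e))$, not $Z(e, 0, s(e))$ (the triple $(e,0,s(e))$ lies in $\mathcal{S}_{A,B}$ only when $e$ is a loop), and your displayed characterization ``$\Phi \circ (\text{Kakutani iso}) = H_1(\pi_\bullet) \circ H_1(\iota)$'' does not typecheck---the defining relation is simply $\Phi = H_1(\pi_\bullet) \circ H_1(\iota)$, the map $H_1(\iota)$ itself being the Kakutani isomorphism of Proposition~\ref{prop:Kakeq}.
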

\begin{proof}
Straightforward.
\end{proof}

\begin{lemma}\label{lem:rho0Formula} 
The map $\rho^0 \colon H_0(\mathcal{H}_{A,B}) \to H_0(\mathcal{H}_{A,B})$ is given by
\[\rho^0 \left( \left[ 1_{Z(\mu)} \right] \right) = \left[ 1_{Z(\mu)} \right] - \left[ 1_{Z(e \mu)} \right], \]
where $e \in E_A^1$ is any edge with $r(e) = s(\mu)$.
\end{lemma}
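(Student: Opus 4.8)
The plan is to unwind the definition of $\rho^0$ through the chain of maps that produced it. Recall that $\rho^0$ is characterized by the relation $H_0(\iota) \circ \rho^0 = \left( \id - H_0(\rho_\bullet) \right) \circ H_0(\iota)$, where $\iota \colon \mathcal{H}_{A,B} \to \mathcal{G}_{A,B} \times_c \ZZ$ is the Kakutani embedding and $\rho_\bullet$ is the translation map on the skew product coming from the cocycle $c$. First I would recall from the long exact sequence construction in~\cite[Section~6]{NO20} that, under the identification $H_0(\mathcal{G}_{A,B} \times_c \ZZ) \cong H_0(\mathcal{H}_{A,B})$ of Proposition~\ref{prop:Kakeq}, the map $H_0(\rho_\bullet)$ is the endomorphism induced by the $\ZZ$-translation on the skew product. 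The task is then to track what this translation does to the generator $\left[ 1_{Z(\mu)} \right]$ of $H_0(\mathcal{H}_{A,B})$ described in~\eqref{eq:H0n}.

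\textbf{Tracing the translation.}
The key geometric input is that the translation $\rho_\bullet$ on $\mathcal{G}_{A,B} \times_c \ZZ$ shifts the $\ZZ$-coordinate by one, and under the Kakutani equivalence established in Lemma~\ref{lem:full} this corresponds precisely to the ``prefixing an edge'' operation that relates the fibre $E_A^\infty \times \{0\}$ to the fibre $E_A^\infty \times \{1\}$. Concretely, since $A$ is essential (no sources), every vertex $s(\mu)$ receives an edge, so I would choose an edge $e \in E_A^1$ with $r(e) = s(\mu)$ and compute how $\left[ 1_{Z(\mu)} \right]$ is transported. The full set $E_A^\infty \times \{0\}$ from Lemma~\ref{lem:full} witnesses that the cylinder $Z(\mu)$ at level $0$ is related, after one translation step, to the cylinder $Z(e\mu)$ (the edge $e$ is prepended because the cocycle $c$ records the length difference $|\mu| - |\nu|$). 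Thus $H_0(\rho_\bullet)$ sends $\left[ 1_{Z(\mu)} \right]$ to $\left[ 1_{Z(e\mu)} \right]$, and subtracting from the identity gives the claimed formula
\[
\rho^0 \left( \left[ 1_{Z(\mu)} \right] \right) = \left[ 1_{Z(\mu)} \right] - \left[ 1_{Z(e\mu)} \right].
\]

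\textbf{Well-definedness and independence of the edge.}
Two routine checks remain. First, I would verify that the right-hand side is independent of the choice of edge $e$: any two edges $e, e'$ with range $s(\mu)$ produce paths $e\mu, e'\mu$ of the same length $|\mu|+1$ with a common range $r(\mu)$, so by the relation preceding~\eqref{eq:H0n} (that $\left[ 1_{Z(\lambda)} \right] = \left[ 1_{Z(\lambda')} \right]$ in $H_0(\mathcal{H}_{A,B,n})$ whenever $\lambda, \lambda'$ share range and length) we get $\left[ 1_{Z(e\mu)} \right] = \left[ 1_{Z(e'\mu)} \right]$ in the limit $H_0(\mathcal{H}_{A,B})$. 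Second, I would confirm compatibility with the isomorphism $H_0(\mathcal{H}_{A,B}) \cong \ZZ_A$: under the identification $\left[ 1_{Z(\mu)} \right] \mapsto \phi^A_{n,\infty}(1_{r(\mu)})$, prefixing an edge $e$ increases the level by one and, upon re-expressing $\left[ 1_{Z(e\mu)} \right]$ at level $n$ via the connecting map $A$, realizes $\rho^0$ as $\id - A$ on $\ZZ_A$, as expected for a cocycle of this type.

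\textbf{Main obstacle.}
The main obstacle is the careful bookkeeping in the second step: pinning down exactly which cylinder the translation $\rho_\bullet$ carries $Z(\mu)$ onto, and in particular confirming that the edge is \emph{prepended} (giving $e\mu$) rather than appended, which hinges on the sign convention in the cocycle $c$ and the direction of the Kakutani equivalence from Lemma~\ref{lem:full}. Once the translation is correctly identified with the prefixing operation, the remainder is the two formal verifications above.
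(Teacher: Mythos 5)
Your proposal is correct and follows essentially the same route as the paper: both unwind $\rho^0$ through the commutative diagram $H_0(\iota) \circ \rho^0 = \left( \id - H_0(\rho_\bullet) \right) \circ H_0(\iota)$, compute $H_0(\iota)\left( \left[ 1_{Z(\mu)} \right] \right) = \left[ 1_{Z(\mu) \times \{0\}} \right]$, and then identify $H_0(\rho_\bullet)\left( \left[ 1_{Z(\mu) \times \{0\}} \right] \right) = \left[ 1_{Z(\mu) \times \{1\}} \right] = \left[ 1_{Z(e\mu) \times \{0\}} \right]$ via the edge-prefixing relation (the paper realizes this last equality implicitly through the bisection $Z(e\mu,0,\mu) \times \{0\}$, whose source is $Z(\mu) \times \{1\}$ and whose range is $Z(e\mu) \times \{0\}$). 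Your additional checks of independence of the edge $e$ and compatibility with $\ZZ_A$ are consistent with, and already implicit in, the paper's earlier description of $H_0\left(\mathcal{H}_{A,B,n}\right)$.
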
 
\begin{proof}
We have the following commutative diagram:
\[ \begin{tikzcd}[column sep = large, row sep = large]
H_0(\G_{A,B} \times_c \ZZ) \arrow{r}{\id - H_0(\rho_\bullet)} &   H_0(\G_{A,B} \times_c \ZZ) \\
H_0(\mathcal{H}_{A,B}) \arrow{r}{\rho^0} \arrow{u}{H_0(\iota)}[swap]{\cong} &   H_0(\mathcal{H}_{A,B}) \arrow{u}{H_0(\iota)}[swap]{\cong}
\end{tikzcd} \]
The maps are given by \[H_0(\iota) \left( \left[ 1_{Z(\mu)} \right] \right) = \left[ 1_{Z(\mu) \times \{0\}} \right] \in H_0 \left( \mathcal{G}_{A,B} \times_c \ZZ \right)\]  
and \[H_0\left(\rho_\bullet\right) \left( \left[ 1_{Z(\mu) \times \{0\}} \right] \right) = \left[ 1_{Z(\mu) \times \{1\}} \right] = \left[ 1_{Z(e \mu) \times \{0\}} \right] \in H_0 \left( \mathcal{G}_{A,B} \times_c \ZZ \right), \]
 where $e \in E_A^1$ is any edge with $r(e) = s(\mu)$. Combining these we obtain the desired description of $\rho^0$.
\end{proof}

When $B = 0$, the map $\rho^0 \colon H_0(\mathcal{H}_{A,0}) \to H_0(\mathcal{H}_{A,0})$ coincides with the map \[(\id - \varphi) \colon H_0 \left( \mathcal{H}_{E_A} \right) \to H_0 \left( \mathcal{H}_{E_A} \right),\] where $\varphi$ is from~\cite[Definition~7.5]{NO20}.\footnote{We apologize for the conflicting notation of $\varphi$ with the 1-cocycle from Section~\ref{sec:KEP}, but since the 1-cocycle makes no appearence for the rest of this section we believed it better to stick with the notation from~\cite{NO20} 
to make it easier to compare with results therein.}           
Below, we (trivially) extend the definition of~$\varphi$, as well as~$\varphi^{(k)}$ from~\cite[Definition~8.5]{NO20}, 
 to Katsura--Exel--Pardo groupoids. The automorphism~$\varphi$ is the one induced by $H_0 \left( \rho_\bullet \right)$ when identifying $H_0 \left( \mathcal{G}_{A,B} \times_c \ZZ \right)$ with~$H_0 \left( \mathcal{H}_{A,B} \right)$.


\begin{definition}
Define $\varphi \colon H_0 \left( \mathcal{H}_{A,B} \right) \to H_0 \left( \mathcal{H}_{A,B} \right)$ by for each $\mu \in E_A^*$ setting  
\[\varphi  \left( \left[ 1_{Z(\mu)} \right] \right) = \left[ 1_{Z(e \mu)} \right],  \]
where $e \in E_A^1$ is any edge with $r(e) = s(\mu)$.
For $k \in \ZZ$ we further define
\[\varphi^{(k)} \coloneqq   \begin{cases}
 -(\id + \varphi + \cdots + \varphi^{k-1}) & k > 0, \\
0 & k = 0, \\
 \varphi^{-1} + \varphi^{-2} + \cdots + \varphi^{k} & k < 0.
\end{cases} \]
\end{definition}

\begin{remark}
In the case that $B = 0$ the map $\varphi \colon H_0 \left( \mathcal{H}_{A,0} \right) \to H_0 \left( \mathcal{H}_{A,0} \right)$ coincides the inverse $\delta^{-1}$ of Matui's map $\delta$ from~\cite[page~56]{MatTFG}. See Remarks~7.6 and~8.8 in~\cite{NO20}
for more on this.
\end{remark}

The next lemma is essentially the same as Lemma~8.6 in~\cite{NO20}.

\begin{lemma}\label{lem:PsiFormula}
 Let $[f] \in H_1 \left( \mathcal{G}_{A,B} \right)$ and write $f = \sum_{i=1}^k n_i 1_{Z(\mu_i, 1, \nu_i)}$. Then the map
  $\Psi \colon H_1 \left( \mathcal{G}_{A,B} \right) \to H_0 \left( \mathcal{H}_{A,B} \right)$ is given by%
\[ \Psi([f]) = \sum_{i=1}^k n_i  \varphi^{\left(     \left|  \nu_i \right|   - \left|  \mu_i \right|     \right)} \left(\left[1_{Z(\nu_i)}\right]\right). \]
\end{lemma}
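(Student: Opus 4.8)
The plan is to unravel the definition $\partial_1 = H_0(\iota)\circ\Psi$ from~\eqref{eq:exactSubsequence} and to compute the connecting map $\partial_1\colon H_1(\mathcal{G}_{A,B})\to H_0(\mathcal{G}_{A,B}\times_c\ZZ)$ of~\eqref{eq:LES} directly on the cycle $f$. Because $H_0(\iota)$ is an isomorphism (Proposition~\ref{prop:Kakeq}), we have $\Psi = H_0(\iota)^{-1}\circ\partial_1$, and under $H_0(\iota)^{-1}$ the automorphism $H_0(\rho_\bullet)$ corresponds to $\varphi$, so that a level-$k$ class $\left[1_{Z(\lambda)\times\{k\}}\right]$ is sent to $\varphi^{k}\!\left(\left[1_{Z(\lambda)}\right]\right)$; this is exactly the content of Lemma~\ref{lem:rho0Formula} together with the definition of $\varphi$. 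Everything therefore reduces to evaluating $\partial_1[f]$ in $H_0(\mathcal{G}_{A,B}\times_c\ZZ)$ and translating the answer back through $H_0(\iota)^{-1}$.

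Recall that $\partial_1$ is the snake-lemma boundary of the short exact sequence of chain complexes underlying~\eqref{eq:LES}, whose outer maps are induced by $\id - \rho_\bullet$ and $\pi_\bullet$. First I would lift $f$ through $\pi_\bullet$ to the skew-product chain $\widetilde{f} = \sum_i n_i 1_{Z(\mu_i,1,\nu_i)\times\{\ell_i\}}$; any choice of the integers $\ell_i$ yields a valid lift, and I would pick them so that the range face of each summand lands at level $0$. Using $s\!\left(Z(\mu,1,\nu)\right) = Z(\nu)$, $r\!\left(Z(\mu,1,\nu)\right) = Z(\mu)$ and $c\!\left([\mu,1,\nu;x]\right) = \left|\mu\right| - \left|\nu\right|$, the boundary $\partial\widetilde{f}$ becomes $\sum_i n_i\!\left(1_{Z(\nu_i)\times\{k_i\}} - 1_{Z(\mu_i)\times\{0\}}\right)$, where the source cylinder $Z(\nu_i)$ is displaced to level $k_i = \left|\nu_i\right| - \left|\mu_i\right|$ by the cocycle. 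Since $f$ is a cycle, $\sum_i n_i 1_{Z(\mu_i)} = \sum_i n_i 1_{Z(\nu_i)}$ as functions on $E_A^\infty$; lifting this identity to level $0$ lets me replace the level-$0$ range terms and collapses $\partial\widetilde{f}$ to $\sum_i n_i\!\left(1_{Z(\nu_i)\times\{k_i\}} - 1_{Z(\nu_i)\times\{0\}}\right)$, a sum of two-term differences each supported on a single cylinder $Z(\nu_i)$.

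The crux is then to invert $\id - \rho_\bullet$ on this $0$-chain, which is possible and unique since $\rho_\bullet$ acts freely on the levels. Each summand telescopes: $1_{Z(\nu_i)\times\{m\}} - 1_{Z(\nu_i)\times\{m+1\}} = \left(\id - \rho_\bullet\right)\!\left(1_{Z(\nu_i)\times\{m\}}\right)$, so the preimage of $1_{Z(\nu_i)\times\{k_i\}} - 1_{Z(\nu_i)\times\{0\}}$ is an explicit signed sum of the chains $1_{Z(\nu_i)\times\{m\}}$ with $m$ running between $0$ and $k_i$. Applying $H_0(\iota)^{-1}$ turns $1_{Z(\nu_i)\times\{m\}}$ into $\varphi^{m}\!\left(\left[1_{Z(\nu_i)}\right]\right)$, and the resulting truncated sum of powers of $\varphi$ is, by the very definition of $\varphi^{(k)}$, the operator $\varphi^{\left(\left|\nu_i\right|-\left|\mu_i\right|\right)}$ (the cases $k_i>0$ and $k_i<0$ match the two nonzero branches of $\varphi^{(k)}$, while $k_i = 0$ contributes nothing). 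Summing over $i$ with weights $n_i$ produces the asserted formula.

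The step I expect to be most delicate is the level-and-sign bookkeeping that pervades the last two paragraphs. One must fix the source/range levels in the skew product consistently with Lemma~\ref{lem:rho0Formula}, so that $H_0(\rho_\bullet)$ genuinely corresponds to $\varphi$ and not to $\varphi^{-1}$; verify that the cycle relation is precisely what cancels every level-$0$ contribution and isolates the single-cylinder differences; and confirm that the telescoped geometric sum, with its correct sign and range of summation, coincides with the piecewise definition of $\varphi^{(k)}$ for both signs of $\left|\nu_i\right| - \left|\mu_i\right|$. Once these conventions are pinned down the argument is purely formal and runs exactly parallel to that of~\cite[Lemma~8.6]{NO20}, the graph-groupoid ($B = 0$) analogue of the present statement.
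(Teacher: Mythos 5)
Your proposal takes essentially the same route as the paper's proof, step for step: lift $f$ to the skew product with range faces at level $0$ (the paper's $h = f\times 0$), compute $\delta_1$ of the lift, use the cycle condition $\sum_i n_i 1_{Z(\mu_i)} = \sum_i n_i 1_{Z(\nu_i)}$ to collapse the boundary onto differences supported on the cylinders $Z(\nu_i)$, invert $\id-\rho_0$ by telescoping (the paper invokes \cite[Lemma~6.2]{NO20} for exactly this), and transport back through $H_0(\iota)^{-1}$ via the intertwining $H_0(\iota)\circ\varphi = H_0(\rho_\bullet)\circ H_0(\iota)$. In outline your argument is correct and complete.

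The one place you and the paper part ways is the sign you yourself flagged as the delicate point. With the skew-product convention this paper actually uses (Lemma~\ref{lem:full}: an element whose range sits at level $n$ has source at level $n+c(g)$; proof of Lemma~\ref{lem:rho0Formula}: $H_0(\rho_\bullet)$ raises the level by one), the source cylinder of $Z(\mu_i,m_i,\nu_i)\times\{0\}$ lands at level $\left|\mu_i\right|-\left|\nu_i\right|$, not at $k_i=\left|\nu_i\right|-\left|\mu_i\right|$ as you assert. Running your telescoping argument with the paper's convention yields $\Psi([f]) = \sum_i n_i\,\varphi^{\left(\left|\mu_i\right|-\left|\nu_i\right|\right)}\left(\left[1_{Z(\nu_i)}\right]\right)$, which is exactly the paper's penultimate display for $\partial_1([f])$; the paper then passes to the stated exponent $\left|\nu_i\right|-\left|\mu_i\right|$ by a sign flip that the intertwining above does not justify (it carries no inverse). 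So the stated exponent and the level convention cannot both be as written: your derivation reproduces the lemma's formula only because your level assignment is the mirror image of the paper's, and the paper's own proof has the same unresolved wobble. This is a convention/sign erratum rather than a conceptual gap, but a complete write-up must fix the convention from \cite{NO20} and make the exponent consistent with it, since the precise formula (not merely its kernel) is what gets matched against \cite[Lemma~8.6]{NO20} in the proof of Lemma~\ref{lem:PsiPi}.
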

\begin{proof}
Recall that $\partial_1 = H_0(\iota) \circ \Psi$, where $\partial_1 \colon H_1 \left( \mathcal{G}_{A,B} \right) \to H_0 \left( \mathcal{G}_{A,B} \times_c \ZZ \right)$ is the connecting homomorphism in~\eqref{eq:LES}. We are going to describe $\partial_1$ in a similar way as in the proof of~\cite[Lemma~8.6]{NO20}. 
It may be helpful to consult Figure~2 on page~29 of~\cite{NO20}, as we will adopt the notation from there.

Let $[f] \in H_1 \left( \mathcal{G}_{A,B} \right)$ be given, where $f \in C_c\left(\mathcal{G}_{A,B}, \ZZ\right)$ satisfies $\delta_1(f) = 0$. Then we can write $f = \sum_{i=1}^k n_i 1_{Z(\mu_i, 1, \nu_i)}$, where $\sum_{i=1}^N k_i 1_{Z(\mu_i)} = \sum_{i=1}^N k_i 1_{Z(\nu_i)}$. Now view $f + \im(\delta_2)$ as an element in $C_c\left(\mathcal{G}_{A,B}, \ZZ\right) / \im(\delta_2)$.

The element $\pi_1(h) + \im(\delta_2)$, where \[h \coloneqq f \times 0 = \sum_{i=1}^k n_i 1_{Z(\mu_i, 1, \nu_i) \times \{0\}} \in C_c\left(\mathcal{G}_{A,B} \times_c \ZZ, \ZZ\right),\] provides a lift of $f + \im(\delta_2)$ by $\pi_1 + \im(\delta_2)$. Next, we need to compute
 \[\widetilde{\delta_1}(h + \im(\delta_2)) = \delta_1(h) \in C_c\left((\mathcal{G}_{A,B} \times_c \ZZ)^{(0)}, \ZZ\right) \cong C_c\left(E_A^\infty \times \ZZ, \ZZ\right).\] Setting $l_i \coloneqq \left| \mu_i \right| -  \left| \nu_i \right|$ to save space we have
\begin{align*}
\delta_1(h) &= \sum_{i=1}^k n_i (s_* - r_*) \left( 1_{Z(\mu_i, m_i, \nu_i) \times \{0\}} \right) 
\\ &= \sum_{i=1}^k n_i  \left( 1_{s(Z(\mu_i, m_i, \nu_i) \times \{0\})}   - 1_{r(Z(\mu_i, m_i, \nu_i) \times \{0\})}    \right) \\
&=  \sum_{i=1}^k n_i  \left( 1_{Z(\nu_i) \times \{ \left| \mu_i \right| -  \left| \nu_i \right|   \}}   - 1_{Z(\mu_i) \times \{0\})}   \right)
\\ & = \sum_{i=1}^k n_i  \left( 1_{Z(\nu_i) \times \{ l_i  \}}   - 1_{Z(\nu_i) \times \{0\})}   \right),
\end{align*}
where we have used that $\sum_{i=1}^N k_i 1_{Z(\mu_i)} = \sum_{i=1}^N k_i 1_{Z(\nu_i)}$.
By~\cite[Lemma~6.2]{NO20} 
the (unique) lift of $\delta_1(h)$ by $\id - \rho_0$ is the function 
\[g \coloneqq \sum_{i=1}^k n_i L_i, \]
where 
\[ L_i = \begin{cases}
   -  \sum_{j=0}^{l_i -1}   1_{Z(\nu_i) \times \{ j  \}}    \quad      &l_i > 0, \\
0          \quad & l_i = 0, \\
\sum_{j= l_i}^{-1}   1_{Z(\nu_i) \times \{ j  \}}            \quad & l_i < 0.
\end{cases}    \]
Observe that 
\[  \left[ L_i \right] = \varphi^{(l_i)} \left(\left[1_{Z(\nu_i) \times \{ 0  \}}   \right] \right) \in  H_0(\G_{A,B} \times_c \ZZ). \]
This means that 
\[\partial_1([f]) = [g] = \sum_{i=1}^k n_i  \varphi^{\left(\left|  \mu_i \right| - \left|  \nu_i \right| \right)} \left(\left[1_{Z(\nu_i) \times \{0\}}\right]\right) \in H_0(\G_{A,B} \times_c \ZZ),  \]
and hence
\[ \Psi([f]) = \sum_{i=1}^k n_i  \varphi^{\left(     \left|  \nu_i \right|   - \left|  \mu_i \right|     \right)} \left(\left[1_{Z(\nu_i)}\right]\right) \in H_0 \left( \mathcal{H}_{A,B} \right). \]
\end{proof} 

\begin{lemma}\label{lem:PsiPi} 
Assume that $U  \subseteq \mathcal{G}_{A,0} \subseteq \mathcal{G}_{A,B}$ is a full bisection. Let~$I$ and~$I_A$  denote the index maps of $\mathcal{G}_{A,B}$ and $\mathcal{G}_{A,0}$, respectively. If $\Psi \left( I \left( \pi_{U} \right) \right) = 0 \in H_0 \left( \mathcal{H}_{A,B} \right)$, then~${I_A \left( \pi_{U} \right) = 0 \in H_1 \left( \mathcal{G}_{A,0} \right)}$. 
\end{lemma}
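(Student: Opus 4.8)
The plan is to reduce the statement to the injectivity of the analogue of $\Psi$ for the SFT-groupoid $\mathcal{G}_{A,0}$. Write $\Phi_A$ and $\Psi_A$ for the maps appearing in the exact sequence~\eqref{eq:exactSubsequence} associated to the pair $(A,0)$. When $B=0$ the action $\kappa$ is trivial, so $\mathcal{H}_{A,0}$ is the AF-core of the graph groupoid $\mathcal{G}_{E_A}$; hence $H_1(\mathcal{H}_{A,0}) \cong \varinjlim(\ZZ^N,0) = 0$. Feeding this into the exactness of~\eqref{eq:exactSubsequence} at $H_1(\mathcal{G}_{A,0})$ gives $\ker(\Psi_A) = \im(\Phi_A) = 0$, so $\Psi_A$ is injective. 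It therefore suffices to prove that $\Psi_A(I_A(\pi_U)) = 0 \in H_0(\mathcal{H}_{A,0})$.

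First I would decompose the full bisection $U$ of $\mathcal{G}_{A,0}$ as a finite disjoint union $U = \bigsqcup_i Z(\mu_i, 0, \nu_i)$ of basic bisections, so that the cycle $1_U = \sum_i 1_{Z(\mu_i, 0, \nu_i)}$ represents $I_A(\pi_U)$; viewed through the canonical embedding $\mathcal{G}_{A,0} \hookrightarrow \mathcal{G}_{A,B}$ this same chain represents $I(\pi_U)$. The derivation of the formula in Lemma~\ref{lem:PsiFormula} invokes each generator only through its cocycle value $|\mu_i| - |\nu_i|$ and its source projection $Z(\nu_i)$, both of which are independent of the middle coordinate, so the formula applies verbatim to $1_U$. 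This gives
\[ \Psi(I(\pi_U)) = \sum_i \varphi^{(|\nu_i| - |\mu_i|)}\left(\left[1_{Z(\nu_i)}\right]\right) \in H_0(\mathcal{H}_{A,B}), \]
and, applying the same lemma inside $\mathcal{G}_{A,0}$,
\[ \Psi_A(I_A(\pi_U)) = \sum_i \varphi_A^{(|\nu_i| - |\mu_i|)}\left(\left[1_{Z(\nu_i)}\right]\right) \in H_0(\mathcal{H}_{A,0}), \]
where $\varphi$ and $\varphi_A$ denote the edge-prefixing endomorphisms of $H_0(\mathcal{H}_{A,B})$ and $H_0(\mathcal{H}_{A,0})$ respectively.

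The final step transports the hypothesis across the inclusion. Let $\jmath_* \colon H_0(\mathcal{H}_{A,0}) \to H_0(\mathcal{H}_{A,B})$ be induced by the unit-space-preserving embedding $\mathcal{H}_{A,0} \hookrightarrow \mathcal{H}_{A,B}$. Since $H_0$ depends only on $A$ — both groups being identified with $\ZZ_A$ via $[1_{Z(\mu)}] \mapsto \phi_{n,\infty}^A(1_{r(\mu)})$ — the map $\jmath_*$ carries generators to generators and is an isomorphism; moreover it intertwines $\varphi_A$ with $\varphi$ (and hence $\varphi_A^{(k)}$ with $\varphi^{(k)}$), both being given by $[1_{Z(\mu)}] \mapsto [1_{Z(e\mu)}]$. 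Applying $\jmath_*$ to the expression for $\Psi_A(I_A(\pi_U))$ therefore reproduces that for $\Psi(I(\pi_U))$, so
\[ \jmath_*\left(\Psi_A(I_A(\pi_U))\right) = \Psi(I(\pi_U)) = 0. \]
As $\jmath_*$ is injective, $\Psi_A(I_A(\pi_U)) = 0$, and the injectivity of $\Psi_A$ from the first paragraph then yields $I_A(\pi_U) = 0 \in H_1(\mathcal{G}_{A,0})$.

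The main obstacle is the bookkeeping that makes this comparison legitimate: confirming that Lemma~\ref{lem:PsiFormula} is genuinely insensitive to the middle coordinate (so that the $m=0$ representatives of $U$ may be fed into it), and checking that the identifications $H_0(\mathcal{H}_{A,0}) \cong \ZZ_A \cong H_0(\mathcal{H}_{A,B})$ are simultaneously compatible with the inclusion-induced map $\jmath_*$ and with the endomorphisms $\varphi_A$ and $\varphi$. Once these compatibilities are pinned down, the proof rests on the two clean structural facts — injectivity of $\Psi_A$ and invertibility of $\jmath_*$ — and the argument closes.
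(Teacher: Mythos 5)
Your proposal is correct and follows essentially the same route as the paper: the paper also computes $\Psi(I(\pi_U))$ via Lemma~\ref{lem:PsiFormula} applied to $U=\bigsqcup_i Z(\mu_i,0,\nu_i)$, and then uses that $H_1(\mathcal{G}_{A,0}\times_c\ZZ)\cong H_1(\mathcal{H}_{A,0})=0$ makes the $B=0$ connecting map an isomorphism of $H_1(\mathcal{G}_{A,0})$ onto $\ker(\rho^0)$, under which $I_A(\pi_U)$ corresponds to the same expression. The only cosmetic difference is that you make explicit (via $\jmath_*$ and the intertwining of $\varphi_A$ with $\varphi$) the identification of $H_0(\mathcal{H}_{A,0})$ with $H_0(\mathcal{H}_{A,B})$ that the paper handles implicitly by citing \cite[Lemma~8.6]{NO20}.
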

\begin{proof}
We can write $U = \sqcup_{i=1}^k Z(\mu_i, 0, \nu_i)$, where $E_A^\infty = \sqcup_{i=1}^k Z(\mu_i) = \sqcup_{i=1}^k Z(\nu_i)$. By Lemma~\ref{lem:PsiFormula}  we have
\[ 0 = \Psi \left( I \left( \pi_{U} \right) \right) = \Psi \left( \left[ 1_U \right] \right) = \sum_{i=1}^k \varphi^{\left(     \left|  \nu_i \right|   - \left|  \mu_i \right|     \right)} \left(\left[1_{Z(\nu_i)}\right]\right) \in \ker \left( \rho^0 \right) \subseteq H_0 \left( \mathcal{H}_{A,B} \right). \]

On the other hand, we have that 
 $H_1 \left( \mathcal{G}_{A,0} \right) \cong \ker \left( \rho^0 \right) \cong \ker \left( \id - H_0 \left( \rho_\bullet \right) \right)$ because ${H_1 \left( \mathcal{G}_{A,0} \times_c \ZZ \right) = 0}$ (see~\cite[Section~7]{NO20}). 
And this isomorphism is implemented by the connecting homomorphism $\partial_1$ from~\eqref{eq:LES} for $B = 0$.  Lemma~8.6 in~\cite{NO20}
(or the proof of Lemma~\ref{lem:PsiFormula} with $B = 0$) says that under this isomorphism the element $I_A \left( \pi_{U} \right) \in H_1 \left( \mathcal{G}_{A,0} \right)$ corresponds to $\Psi \left( I \left( \pi_{U} \right) \right) \in \ker \left( \rho^0 \right)$. Hence $I_A \left( \pi_{U} \right) = 0$.
\end{proof}


The following lemma is part of the proof of~\cite[Proposition~2.5]{Ort}, but we nevertheless sketch the proof for completness.


\begin{lemma}\label{lem:rho1Formula} 
The map $\rho^1 \colon H_1(\mathcal{H}_{A,B}) \to H_1(\mathcal{H}_{A,B})$ is given by
\[\rho^1 \left( \left[ 1_{Z(\mu, m, \mu)} \right] \right) = \left[ 1_{Z(\mu, m, \mu)} \right] - \left[ 1_{Z(e \mu, m, e \mu)} \right], \]
where $e \in E_A^1$ is any edge with $r(e) = s(\mu)$.
\end{lemma}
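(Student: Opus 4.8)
The plan is to mirror the argument used for $\rho^0$ in Lemma~\ref{lem:rho0Formula}, replacing the identification of $H_0$ with the corresponding one for $H_1$. The map $\rho^1$ is defined by the relation $H_1(\iota) \circ \rho^1 = (\id - H_1(\rho_\bullet)) \circ H_1(\iota)$, so the task reduces to understanding how the two explicit maps $H_1(\iota)$ and $H_1(\rho_\bullet)$ act on the generators $[1_{Z(\mu,m,\mu)}]$ of $H_1(\mathcal{H}_{A,B})$. Since $H_1(\iota)$ is an isomorphism onto $H_1(\mathcal{G}_{A,B} \times_c \ZZ)$ (via Proposition~\ref{prop:Kakeq}), it suffices to compute the right-hand composite and then transport the answer back through $H_1(\iota)^{-1}$.

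First I would write down the commutative square analogous to the one in Lemma~\ref{lem:rho0Formula}:
\[ \begin{tikzcd}[column sep = large, row sep = large]
H_1(\G_{A,B} \times_c \ZZ) \arrow{r}{\id - H_1(\rho_\bullet)} &   H_1(\G_{A,B} \times_c \ZZ) \\
H_1(\mathcal{H}_{A,B}) \arrow{r}{\rho^1} \arrow{u}{H_1(\iota)}[swap]{\cong} &   H_1(\mathcal{H}_{A,B}) \arrow{u}{H_1(\iota)}[swap]{\cong}
\end{tikzcd} \]
On the left edge, $H_1(\iota)$ sends $[1_{Z(\mu,m,\mu)}]$ to the class $[1_{Z(\mu,m,\mu) \times \{0\}}]$ in the skew product. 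The key step is to compute the action of the deck-transformation map $H_1(\rho_\bullet)$. Just as $H_0(\rho_\bullet)$ shifts a unit-space bisection $Z(\mu) \times \{0\}$ to $Z(\mu) \times \{1\} = Z(e\mu) \times \{0\}$ (prefixing an edge $e$ with $r(e) = s(\mu)$), here $H_1(\rho_\bullet)$ should shift $Z(\mu,m,\mu) \times \{0\}$ to $Z(\mu,m,\mu) \times \{1\}$, which prefixing identifies with $Z(e\mu,m,e\mu) \times \{0\}$. Subtracting gives the claimed formula, and pulling back through $H_1(\iota)^{-1}$ yields
\[ \rho^1\left([1_{Z(\mu,m,\mu)}]\right) = [1_{Z(\mu,m,\mu)}] - [1_{Z(e\mu,m,e\mu)}]. \]

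The main obstacle I anticipate is verifying that prefixing an edge $e$ genuinely produces the bisection $Z(e\mu, m, e\mu)$ at the level of the cocycle $c$ and the self-similar action, rather than some twisted version involving $\varphi(m,\cdot)$. Concretely, I must check that the inclusion-induced identification of $Z(\mu,m,\mu) \times \{1\}$ with a bisection sitting inside $E_A^\infty \times \{0\}$ carries the $\ZZ$-label $m$ through unchanged to give precisely $e\mu$ in both coordinates with the same $m$, using that the assumption that $A$ is essential guarantees such an edge $e$ exists. This is exactly where the computation for $H_1$ differs from the $H_0$ case, since one must track not only the path but also the group element $m$; the relation~\eqref{eq:eqrel} and the cocycle identities from Section~\ref{sec:KEP} should make this a routine but careful bookkeeping check rather than a conceptual difficulty. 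Since the excerpt notes this lemma is part of the proof of~\cite[Proposition~2.5]{Ort}, I would also cross-reference that computation to confirm the labels match.
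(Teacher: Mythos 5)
Your reduction is the same as the paper's: both arguments set up the commutative square relating $\rho^1$ to $\id - H_1(\rho_\bullet)$ via $H_1(\iota)$, and both reduce the lemma to the single identity $\left[1_{Z(\mu,m,\mu)\times\{1\}}\right] = \left[1_{Z(e\mu,m,e\mu)\times\{0\}}\right]$ in $H_1\left(\mathcal{G}_{A,B}\times_c\ZZ\right)$. The gap is in the step you describe as ``prefixing identifies'' and defer as routine bookkeeping: that identity \emph{is} the lemma, and the tools you propose for verifying it cannot establish it. The two bisections $Z(\mu,m,\mu)\times\{1\}$ and $Z(e\mu,m,e\mu)\times\{0\}$ are genuinely distinct subsets of the skew product, and the relation~\eqref{eq:eqrel} is of no help here: it extends paths at their \emph{range} end, $(\mu,m,\nu;x)\sim(\mu\kappa_m(f),\varphi(m,f),\nu f;x)$, with the middle label twisted by $\varphi$, whereas prefixing $e$ happens at the \emph{source} end and is not implemented by $\sim$ at all. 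Moreover, the analogy with Lemma~\ref{lem:rho0Formula} breaks at exactly this point: in $H_0$, equality of the two classes follows from a one-step application of $\delta_1$ to a connecting bisection, but in $H_1$ equality of classes means their difference lies in $\im(\delta_2)$, which is a different and strictly harder kind of statement.

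What the paper actually does is a conjugation argument carried out at the chain level. Prefixing by $e$ is implemented by conjugating with the bisection $Z(e\mu,0,\mu)$ (placed appropriately in the skew product) --- this is also why the label $m$ passes through untouched, answering your worry about twisting by $\varphi(m,\cdot)$: the conjugating bisection carries label $0$. The invariance of the $H_1$ class under this conjugation is then proved explicitly: the paper forms four composable pairs $U_i\circ V_i$ built from $Z(\mu,1,\mu)\times\{1\}$, $Z(e\mu,0,\mu)\times\{0\}$, $Z(\mu,0,e\mu)\times\{1\}$, $Z(\mu,1,e\mu)\times\{1\}$, $Z(e\mu,0,e\mu)\times\{0\}$, and applies Matui's formula $\delta_2\left(1_{U\circ V}\right) = 1_U - 1_{U\cdot V} + 1_V$ from~\cite[Lemma~7.3]{MatHom} to exhibit
\begin{equation*}
1_{Z(\mu,1,\mu)\times\{1\}} - 1_{Z(e\mu,1,e\mu)\times\{0\}} = \delta_2\left(f_1 + f_2 - f_3 - f_4\right),
\end{equation*}
with the general $m$ handled by linearity via~\eqref{eq:indicatorHomology}. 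Without this computation (or an equivalent conjugation-invariance argument for $H_1$), your proof is missing its central step; everything surrounding it in your write-up is correct but is only the framing the paper also uses before the real work begins.
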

\begin{proof}
Arguing similarly as in the proof of Lemma~\ref{lem:rho0Formula} it suffices to show that 
\[  \left[ 1_{Z(\mu, 1, \mu) \times \{1\}} \right] = \left[ 1_{Z(e \mu, 1, e \mu) \times \{0\}} \right] \] 
in $H_1\left(\mathcal{G}_{A,B} \times_c \ZZ  \right)$.

Suppose $U$, $V$ are compact bisections with $s(U) = r(V)$ in some ample 
groupoid $\mathcal{G}$. Denote 
\[U \circ V \coloneqq \left( U \times V \right) \cap \mathcal{G}^{(2)} = \left\{  (g,h) \in \mathcal{G}^{(2)} \mid g \in U, \ h \in V \right\}.\]
By~\cite[Lemma~7.3]{MatHom}, we have
\begin{equation}\label{eq:delta2Formula}
\delta_2 \left( 1_{U \circ V}  \right) = 1_U - 1_{U \cdot V} + 1_V.
\end{equation}

Let $e \in E_A^1$ be any edge with $r(e) = s(\mu)$ and define the following bisections in $\mathcal{G}_{A,B} \times_c \ZZ$:
\begin{align*} 
U_1 & \coloneqq Z(\mu, 1,\mu) \times \{1\},    &    V_1 & \coloneqq Z(\mu, 0, e \mu) \times \{1\}, \\ 
U_2 & \coloneqq Z(e \mu, 0,\mu) \times \{0\},	  & 		 V_2 &  \coloneqq Z(\mu, 1, e \mu) \times \{1\}, \\
U_3 & \coloneqq U_2,   &   V_3 &  \coloneqq V_1, \\
U_4 & \coloneqq Z(e \mu, 0,e \mu) \times \{0\}, 	  & 		 V_4  & \coloneqq U_4. 
\end{align*}
From these we define the indicator functions $f_i \coloneqq 1_{U_i \circ V_i} \in C_c \left( \mathcal{G}_{A,B}^{(2)}, \ZZ \right)$ for $i = 1, 2, 3, 4$. Using~\eqref{eq:delta2Formula} it is easy to check that 
\[\delta_2 \left( f_1 + f_2 - f_3 - f_4 \right) = 1_{Z(\mu, 1, \mu) \times \{1\}}   -    1_{Z(e \mu, 1, e \mu) \times \{0\}}, \] 
which shows that $\left[ 1_{Z(\mu, 1, \mu) \times \{1\}} \right] = \left[ 1_{Z(e \mu, 1, e \mu) \times \{0\}} \right]$ in $H_1\left(\mathcal{G}_{A,B} \times_c \ZZ  \right)$.
\end{proof}

\begin{remark} 
The main result of~\cite{Ort} is the following description of the homology groups of $\mathcal{G}_{A,B}$, assuming that the self-similar graph is pseudo-free: 
\begin{align*}
H_0 \left( \mathcal{G}_{A,B} \right) & \cong \coker \left(I_N - A  \right), \\
H_1 \left( \mathcal{G}_{A,B} \right) & \cong \ker \left(I_N - A  \right) \oplus \coker \left(I_N - B  \right), \\
H_2 \left( \mathcal{G}_{A,B} \right) & \cong \ker \left(I_N - B  \right), \\
H_i \left( \mathcal{G}_{A,B} \right) &= 0, \quad i \geq 3.
\end{align*} 
Here $I_N$ is the $N \times N$ identity matrix and $I_N - A$, $I_N - B$ are viewed as endomorphisms of~$\mathbb{Z}^N$. 
When the self-similar graph is pseudo-free, \cite[Lemma~2.2]{Ort} shows that $H_i \left( \mathcal{H}_{A,B} \right) = 0$ for $i \geq 2$. This truncates the long exact sequence~\eqref{eq:LES} into (identifying as in~\eqref{eq:exactSubsequence}): 
\[ \begin{tikzcd}[column sep = 2.7em]
0 \arrow{r} &[-20pt]    H_2(\mathcal{G}_{A,B}) \arrow{r} &[-7pt] H_1(\G_{A,B} \times_c \ZZ) \arrow{r}{\color{red}  \rho^1} &   H_1(\G_{A,B} \times_c \ZZ) \arrow{r}{\color{Aquamarine} \Phi} & \textcolor{Aquamarine}{(a)}  \\ 
\textcolor{Aquamarine}{(a)} \arrow{r} &  H_1(\mathcal{G}_{A,B})    \arrow{r}{\color{blue} \Psi} & H_0(\G_{A,B} \times_c \ZZ) \arrow{r}{\color{Sepia} \rho^0} &   H_0(\G_{A,B} \times_c \ZZ) \arrow{r} & H_0(\mathcal{G}_{A,B})  \arrow{r} &[-20pt] 0.
\end{tikzcd}  \]
It follows that $H_2 \left( \mathcal{G}_{A,B} \right) \cong \ker \left( \rho^1 \right)$, $H_0 \left( \mathcal{G}_{A,B} \right) \cong \coker \left( \rho^0 \right)$ and that
\begin{equation}\label{eq:SES}
\begin{tikzcd}
0 \arrow{r} & \coker \left( \rho^1 \right) \arrow{r}{\widetilde{\Phi}} &  H_1(\mathcal{G}_{A,B})    \arrow{r}{ \Psi} & \ker \left( \rho^0 \right)  \arrow{r} & 0
\end{tikzcd}
\end{equation}
is exact. 
It is also showed in~\cite{Ort} that 
\begin{align*}
\ker \left( \rho^0 \right) & \cong \ker \left(I_N - A  \right),		&		\coker \left( \rho^0 \right) & \cong \coker \left(I_N - A  \right), \\ 
\ker \left( \rho^1 \right) & \cong \ker \left(I_N - B  \right),	&		\coker \left( \rho^1 \right)& \cong \coker \left(I_N - B  \right).
\end{align*}
Since $\ker \left( \rho^0 \right)$ is free, the exact 
sequence~\eqref{eq:SES} splits, and we therefore obtain an isomorphism 
${H_1(\mathcal{G}_{A,B})   \cong  \ker \left( \rho^0 \right)     \oplus  \coker \left( \rho^1 \right)}$. 

We remark that these results are valid for $N = \infty$ as well. Moreover, the descriptions of~$H_0 \left( \mathcal{G}_{A,B} \right)$ and $H_1 \left( \mathcal{G}_{A,B} \right)$ are valid even when the self-similar graph is not pseudo-free.
\end{remark}

\section{Property~TR for $\mathcal{G}_{A,B}$}\label{sec:TR}

The aim of this section is to show that the Katsura--Exel--Pardo groupoid $\mathcal{G}_{A,B}$ has Property~TR. This means that given ${\alpha \in \left\llbracket \mathcal{G}_{A,B} \right\rrbracket}$ with $I(\alpha) = 0$, we need to show that~${\alpha \in \mathcal{T}(\mathcal{G}_{A,B})}$. In a nutshell, the strategy is to decompose the topological full group as
 \[\left\llbracket \mathcal{G}_{A,B} \right\rrbracket = \left\llbracket \mathcal{H}_{A,B} \right\rrbracket  \left\llbracket \mathcal{G}_{A,0} \right\rrbracket \] and show that Property~TR is inherited from the kernel groupoid $\mathcal{H}_{A,B}$ and the SFT-groupoid~$\mathcal{G}_{A,0}$. In what follows we will view~${\mathcal{G}_{A,0} \cong \mathcal{G}_{A}}$ as a subgroupoid of $\mathcal{G}_{A,B}$.

\begin{assumption}
In this whole section we fix $N \times N$ matrices $A,B$ which satisfy the AH~criteria and where $A$ is essential. In particular $N < \infty$ and $A$ is an irreducible non-permutation matrix.
\end{assumption}


\begin{proposition}\label{prop:IHonto}
The index map $I_\mathcal{H} \colon  \left\llbracket \mathcal{H}_{A,B} \right\rrbracket \to H_1\left(\mathcal{H}_{A,B}\right)$ is surjective.
\end{proposition}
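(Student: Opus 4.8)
The plan is to reduce surjectivity to hitting a generating set of $H_1(\mathcal{H}_{A,B})$, and then to realise each generator as the index of an explicit element of the topological full group. By~\eqref{eq:H1span} each $H_1(\mathcal{H}_{A,B,n})$ is spanned by the classes $\left[1_{Z(\mu,1,\mu)}\right]$ with $\mu \in E_A^n$, and by~\eqref{eq:limitH} the group $H_1(\mathcal{H}_{A,B})$ is the inductive limit of these. Since every element of an inductive limit of abelian groups comes from some finite stage, $H_1(\mathcal{H}_{A,B})$ is generated as an abelian group by the classes $\left[1_{Z(\mu,1,\mu)}\right]$ for $\mu \in E_A^*$. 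As $I_\mathcal{H}$ is a group homomorphism, its image is a subgroup, so it suffices to show that each $\left[1_{Z(\mu,1,\mu)}\right]$ lies in $\im(I_\mathcal{H})$.

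Fix $\mu \in E_A^*$. The set $Z(\mu,1,\mu)$ is a compact open bisection contained in $\mathcal{H}_{A,B}$ with $s(Z(\mu,1,\mu)) = r(Z(\mu,1,\mu)) = Z(\mu)$, but it is not full. I would complete it to a full bisection by adjoining the identity on the complement of $Z(\mu)$, that is, set
\[ U \coloneqq Z(\mu,1,\mu) \sqcup \left( E_A^\infty \setminus Z(\mu) \right), \]
where the second summand is regarded as a clopen set of units. Then $U$ is a compact open full bisection of $\mathcal{H}_{A,B}$, so $\pi_U \in \left\llbracket \mathcal{H}_{A,B} \right\rrbracket$, and by the definition of the index map
\[ I_\mathcal{H}(\pi_U) = \left[1_U\right] = \left[1_{Z(\mu,1,\mu)}\right] + \left[1_{E_A^\infty \setminus Z(\mu)}\right] \in H_1(\mathcal{H}_{A,B}). \]

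The remaining point is that the extra, unit-supported, summand vanishes in homology. I claim that $\left[1_V\right] = 0$ in $H_1(\mathcal{H}_{A,B})$ for every clopen $V \subseteq \mathcal{G}_{A,B}^{(0)}$. Indeed, taking $U = V$ (viewed as the identity bisection on $V$) for both factors in~\eqref{eq:delta2Formula} gives $\delta_2\left(1_{V \circ V}\right) = 1_V - 1_V + 1_V = 1_V$, so $1_V \in \im(\delta_2)$. Applying this with $V = E_A^\infty \setminus Z(\mu)$ yields $I_\mathcal{H}(\pi_U) = \left[1_{Z(\mu,1,\mu)}\right]$, exactly as wanted. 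Since the classes $\left[1_{Z(\mu,1,\mu)}\right]$ generate $H_1(\mathcal{H}_{A,B})$ and each lies in the image of the homomorphism $I_\mathcal{H}$, the map $I_\mathcal{H}$ is surjective.

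The only step that is not purely formal is the last one: completing the non-full bisection $Z(\mu,1,\mu)$ to a full bisection unavoidably introduces the unit-supported term $\left[1_{E_A^\infty \setminus Z(\mu)}\right]$ into the index, and the whole argument hinges on recognising that such terms die in $H_1$. The identity $\delta_2(1_{V \circ V}) = 1_V$ makes this transparent, while everything else---that $U$ is a genuine full bisection inside $\mathcal{H}_{A,B}$, and that the displayed classes generate the homology---is routine.
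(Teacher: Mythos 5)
Your proof is correct and follows essentially the same route as the paper: both reduce to the generators $\left[1_{Z(\mu,1,\mu)}\right]$ via~\eqref{eq:limitH} and~\eqref{eq:H1span}, and both realise each generator as the index of $\pi_U$ for the full bisection $U = Z(\mu,1,\mu) \sqcup \left( E_A^\infty \setminus Z(\mu) \right)$. The only difference is presentational: where the paper cites Matui's Lemma~7.3 to conclude $I_\mathcal{H}(\pi_U) = \left[1_{Z(\mu,1,\mu)}\right]$, you spell out why the unit-supported term vanishes in $H_1$ via the computation $\delta_2\left(1_{V \circ V}\right) = 1_V$, which is a valid and welcome elaboration of the same step.
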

\begin{proof}
Let $\mu \in E_A^*$ and consider the bisection $V \coloneqq Z(\mu,1,\mu) \subseteq \mathcal{H}_{A,B}$. Since we have ${s\left(V \right) = r\left(V \right) = Z(\mu)}$ we can define a full bisection $U \coloneqq V \sqcup \left( E_A^\infty \setminus Z(\mu) \right) \subseteq \mathcal{H}_{A,B}$. By~\cite[Lemma~7.3]{MatHom} we have
$I_\mathcal{H} \left( \pi_U \right) = \left[ 1_V  \right]$. The result now follows since these elements span $H_1\left(\mathcal{H}_{A,B}\right)$ (by~\eqref{eq:limitH} and~\eqref{eq:H1span}).
\end{proof}

\begin{lemma}\label{lem:HnTR}
For each $n \in \mathbb{N}$ the groupoid $\mathcal{H}_{A,B,n}$ has Property~TR.
\end{lemma}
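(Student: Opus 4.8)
The plan is to reduce the statement, vertex by vertex, to a single transformation groupoid of $\ZZ$. First I would exploit the fact that $\mathcal{H}_{A,B,n}$ carries no arrows between cylinders determined by distinct ``middle vertices''. For $v \in E_A^0$ put $\Omega_v \coloneqq \bigsqcup_{\mu \in E_A^n,\, r(\mu) = v} Z(\mu)$, a clopen subset of $E_A^\infty = \mathcal{H}_{A,B,n}^{(0)}$. Since the source and range of any $[\mu, m, \nu; x] \in \mathcal{H}_{A,B,n}$ lie in the same $\Omega_v$ (namely $v = r(\mu) = r(\nu)$), the sets $\Omega_v$ partition the unit space into clopen invariant pieces, and we get a decomposition $\mathcal{H}_{A,B,n} = \bigsqcup_{v} \mathcal{H}_{A,B,n}\vert_{\Omega_v}$ as a disjoint union of groupoids. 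Writing $\mathcal{H}^{(v)}$ for the restriction, this splits all relevant data: $\llbracket \mathcal{H}_{A,B,n} \rrbracket = \prod_v \llbracket \mathcal{H}^{(v)} \rrbracket$, $H_1(\mathcal{H}_{A,B,n}) = \bigoplus_v H_1(\mathcal{H}^{(v)})$, the index map splits as a direct sum, and every transposition decomposes as a product of transpositions supported in a single $\mathcal{H}^{(v)}$ (because all arrows stay within one $\Omega_v$). Hence it suffices to establish Property~TR for each $\mathcal{H}^{(v)}$.

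Next I would identify $\mathcal{H}^{(v)}$ explicitly. Let $F_v \coloneqq \{\mu \in E_A^n \mid r(\mu) = v\}$ and $Y_v \coloneqq v E_A^\infty$, so that $\mu y \mapsto (\mu, y)$ is a homeomorphism $\Omega_v \cong F_v \times Y_v$. Under this identification an element $[\mu, m, \nu; \nu y]$ acts as $(\nu, y) \mapsto (\mu, \kappa_m(y))$, and a direct check of the composition law shows that $\mathcal{H}^{(v)} \cong (F_v \times F_v) \times \mathcal{G}_v$, where $F_v \times F_v$ is the pair groupoid on the finite set $F_v$ and $\mathcal{G}_v$ is the transformation groupoid of the single homeomorphism $\kappa_1 \colon Y_v \to Y_v$ (here I use that, the paths all having length exactly $n$, the relation $\sim$ produces no further identifications, so only the germ of the $\ZZ$-action on the tail survives). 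Since $\mathcal{H}^{(v)} = \mathcal{H}_{A,B,n} \cap s^{-1}(\Omega_v) \cap r^{-1}(\Omega_v)$ is an open subgroupoid of the effective groupoid $\mathcal{G}_{A,B}$, it is effective, and hence so is $\mathcal{G}_v$. The pair-groupoid factor is elementary (it contributes nothing to $H_1$ and is generated by transpositions), so restricting to $\{\mu_0\} \times Y_v$ for a fixed $\mu_0 \in F_v$ exhibits $\mathcal{H}^{(v)}$ as Kakutani equivalent to $\mathcal{G}_v$. As the isomorphisms of topological full groups and of $H_1$ attached to a full clopen set intertwine the index maps and carry transpositions to transpositions, Property~TR for $\mathcal{H}^{(v)}$ follows from Property~TR for $\mathcal{G}_v$.

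It therefore remains to prove Property~TR for the $\ZZ$-transformation groupoid $\mathcal{G}_v = \ZZ \ltimes_{\kappa_1} Y_v$, and this is the heart of the matter. Every element of $\llbracket \mathcal{G}_v \rrbracket$ is given by a finite clopen partition $Y_v = \bigsqcup_{k} Y_k$ for which $\bigsqcup_k \kappa_k(Y_k) = Y_v$ as well, acting as $\kappa_k$ on $Y_k$; its image under $I$ lives in $H_1(\mathcal{G}_v) \cong \ker\!\big(\id - (\kappa_1)_* \colon C(Y_v, \ZZ) \to C(Y_v, \ZZ)\big)$. I would then mimic Matui's treatment of SFT-groupoids, and its graph-groupoid adaptation in~\cite{NO20}: given such an element with vanishing index, I would peel off transpositions---each swapping a clopen set $W$ with a disjoint translate $\kappa_k(W)$---to successively simplify the partition $\{Y_k\}$ and reduce to the identity, the index-zero hypothesis being exactly what guarantees that this peeling can be completed rather than leaving a residual ``rotation'' of nonzero index. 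The main obstacle is precisely that $\kappa_1$ need be neither free nor minimal on $Y_v$ (for instance $\kappa_1 = \id$ over the $B$-sink part, where $H_1(\mathcal{G}_v)$ collapses, cf.~\eqref{eq:Homn}); the bookkeeping that keeps the successive swaps disjoint and terminating must be organized using only the effectiveness of $\mathcal{G}_v$ rather than a global dynamical hypothesis. Once this reduction is carried out, an index-zero element is written as a product of transpositions, giving $\ker(I) \subseteq \mathcal{T}(\mathcal{G}_v)$; the reverse inclusion is automatic, so $\mathcal{G}_v$---and with it $\mathcal{H}_{A,B,n}$---has Property~TR.
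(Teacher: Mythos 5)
Your opening reduction is fine: the sets $\Omega_v$ are indeed $\mathcal{H}_{A,B,n}$-invariant, and the restriction to $\Omega_v$ does split as a finite pair groupoid times a groupoid living on the tails $Y_v$. But two things go wrong afterwards, and the second is fatal. First, the tail factor $\mathcal{G}_v$ is \emph{not} the transformation groupoid $\ZZ \ltimes_{\kappa_1} Y_v$: by the explicit description of $\sim$, one has $[\mu,m,\nu;\nu y]=[\mu,m',\nu;\nu y]$ whenever $\kappa_m(y)=\kappa_{m'}(y)$ and $\varphi(m,y\vert_j)=\varphi(m',y\vert_j)$ for some $j$, so $\mathcal{G}_v$ is the Exel--Pardo groupoid of (cocycle-refined) germs. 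The two coincide only under pseudo-freeness, which is \emph{not} part of the AH~criteria; in the presence of $B$-sinks the transformation groupoid is not even effective, and your formula $H_1(\mathcal{G}_v)\cong\ker\left(\id-(\kappa_1)_*\right)$ is the formula for the wrong groupoid --- Equation~\eqref{eq:Homn} records precisely how $H_1$ collapses when passing to germs. Relatedly, your transfer step is not available as stated: there is no isomorphism of topological full groups attached to the full clopen set $\{\mu_0\}\times Y_v$, only an embedding $\left\llbracket \mathcal{G}\vert_Y \right\rrbracket \hookrightarrow \left\llbracket \mathcal{G} \right\rrbracket$, and Property~TR is not known to pass from a restriction to the ambient groupoid without extra hypotheses such as pure infiniteness and minimality (cf.\ \cite[Proposition~4.5]{MatProd} and the remark following Corollary~\ref{cor:AH}), which $\mathcal{H}^{(v)}$ lacks. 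Making this step honest forces you to take an arbitrary full bisection of $\mathcal{H}^{(v)}$, split off a permutation part, and treat the block-diagonal remainder by hand --- which is exactly the content of the actual proof.

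Second, and decisively: Property~TR for $\mathcal{G}_v$ --- what you yourself call the heart of the matter --- is never proved. You defer it to ``mimicking Matui's peeling'' while correctly observing that every available template (SFT-groupoids, graph groupoids, Cantor minimal systems) relies on freeness, minimality or pure infiniteness that $\kappa_1 \curvearrowright Y_v$ does not have; there is no citable result for germ groupoids of non-free, non-minimal $\ZZ$-actions. So the proposal replaces the lemma by an unproven claim at least as hard. The paper avoids all dynamics with a finite combinatorial argument: refine $U$ via \eqref{eq:decompose} so that $U=\bigsqcup_i Z(\mu_i,m_i,\nu_i)$ with all paths of length exactly~$n$; then $U=U_\mathcal{H}\cdot U_A$, where $U_A=\bigsqcup_i Z(\mu_i,0,\nu_i)$ is literally a permutation of the finite set $E_A^n$ (hence in $\mathcal{T}(\mathcal{H}_{A,B,n})$), and $U_\mathcal{H}=\bigsqcup_i Z(\mu_i,m_i,\mu_i)$ is diagonal. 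Multiplying $U_\mathcal{H}$ by explicit pairs of transpositions collects, vertex by vertex, all the integers $m_i$ onto one distinguished cylinder with total $\sum_{i\in\mathcal{I}_v} m_i$, and the computation $H_1(\mathcal{H}_{A,B,n})\cong\ZZ^{R_B}$ from \eqref{eq:Homn} says that $I_{\mathcal{H},n}(\pi_U)=0$ exactly when all these sums vanish --- so the collected element is the identity, and no peeling argument is needed. If you want to keep your structural framing, the fix is to prove TR for $\mathcal{G}_v$ by this same collecting trick; but then the detour through the product decomposition and Kakutani equivalence buys nothing.
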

\begin{proof}
Let $U \subseteq \mathcal{H}_{A,B,n}$ be a full bisection. Then  $U = \sqcup_{i=1}^k Z(\mu_i, m_i, \nu_i)$, where $\mu_i, \nu_i \in E_A^{\leq n}$ satisfy $\left| \mu_i \right| = \left| \nu_i \right|$, $r(\mu_i) = r(\nu_i)$ and $E_A^\infty = \sqcup_{i=1}^k Z(\mu_i) = \sqcup_{i=1}^k Z(\nu_i)$. Using the fact that each basic bisection decomposes as
\begin{equation}\label{eq:decompose}
Z(\mu, m, \nu) = \bigsqcup_{e \in s^{-1}(r(\nu))} Z(\mu \kappa_m(e) , \varphi(m, e), \nu e)
\end{equation}
we can assume without loss of generality that $\left| \mu_i \right| = \left| \nu_i \right| = n$ for all $1 \leq i \leq k$. We may also set $m_i = 0$ whenever $r(\mu_i)$ is not $B$-regular, for then $Z(\mu_i, m_i, \nu_i) = Z(\mu_i, 0, \nu_i)$, by the same reasoning as in Equation~\eqref{eq:degenerateBisection}.

Let us now consider the index map $I_{\mathcal{H}, n} \colon  \left\llbracket \mathcal{H}_{A,B,n} \right\rrbracket \to H_1(\mathcal{H}_{A,B,n})$. Using~\eqref{eq:indicatorHomology} we compute 
\[ I_{\mathcal{H}, n} \left( \pi_U \right) = \left[ 1_U \right] = \sum_{i=1}^k \left[1_{Z(\mu_i, m_i, \nu_i)} \right] = \sum_{i=1}^k m_i \left[1_{Z(\mu_i, 1, \mu_i)} \right]  \in  H_1(\mathcal{H}_{A,B,n}). \]
For each vertex $v \in E_A^0$ let $\mathcal{I}_v \coloneqq \left\{1 \leq i \leq k \mid r(\mu_i) = v  \right\}$. 
Using the identification in~\eqref{eq:Homn} (where only the $B$-regular vertices matter) we see that $I_{\mathcal{H}, n} \left( \pi_U \right) = 0$ if and only if ${\sum_{i \in \mathcal{I}_v} m_i = 0}$ for each vertex $v \in E_A^0$. 

We define two more full bisections in $\mathcal{H}_{A,B,n}$, namely 
\[ U_\mathcal{H} \coloneqq \sqcup_{i=1}^k Z(\mu_i, m_i, \mu_i)   \quad \text{ and }		 \quad		 U_A \coloneqq \sqcup_{i=1}^k Z(\mu_i, 0, \nu_i).   \]
Observe that $U_\mathcal{H}  \cdot U_A = U$. We claim that $\pi_{U_A}$ is a product of transpositions, i.e. that~${\pi_{U_A} \in \mathcal{T}(\mathcal{H}_{A,B,n})}$. Indeed, since $E_A^\infty = \sqcup_{i=1}^k Z(\mu_i) = \sqcup_{i=1}^k Z(\nu_i)$ and $\left| \mu_i \right| = \left| \nu_i \right| = n$, we must have that $E_A^n = \{\mu_1, \mu_2, \ldots, \mu_k \} = \{\nu_1, \nu_2, \ldots, \nu_k \}$. Hence the homeomorphism $\pi_{U_A}$ on $E_A^\infty$ can be identified with a permutation on a finite set of $k$ symbols which maps~$\nu_i$ to~$\mu_i$. The claim then follows.

Next we will show that $\pi_{U_\mathcal{H}}$ is a product of transpositions when $I_{\mathcal{H}, n} \left( \pi_U \right) = 0$. Let~$\mathcal{I}^0$ denote the set of vertices $v$ for which $\mathcal{I}_v \neq \emptyset$ and pick a distinguished index $i_v \in \mathcal{I}_v$ for each vertex $v \in \mathcal{I}^0$.
Suppose that $r(\mu_{i_1}) = v = r(\mu_{i_v})$ for some index $i_1 \neq i_v$. 
Set~$V_1 \coloneqq Z(\mu_{i_v}, m_{i_1}, \mu_{i_1})$ and $W_1 \coloneqq Z(\mu_{i_v}, 0, \mu_{i_1})$. Then
\[U_\mathcal{H} \cdot \widehat{V_1} \cdot \widehat{W_1} = \left( \bigsqcup_{i \neq i_v, i_1} Z(\mu_i, m_i, \mu_i) \right) \bigsqcup Z(\mu_{i_v},m_{i_v} + m_{i_1}, \mu_{i_v}) \bigsqcup Z(\mu_{i_1}, 0, \mu_{i_1}). \]
By iterating this process enough times for each vertex we can write 
\begin{equation}\label{eq:iterate}
U_\mathcal{H} \cdot \widehat{V_1} \cdot \widehat{W_1} \cdots \widehat{V_K} \cdot \widehat{W_K} =  
\bigsqcup_{v \in \mathcal{I}^0} \left(  Z\left( \mu_{i_v},{\textstyle \sum_{i \in \mathcal{I}_v} m_{i}}, \mu_{i_v} \right)              
\sqcup    \bigsqcup_{i \in \mathcal{I}_v \setminus \{i_v\}}  Z(\mu_i, 0, \mu_i) \right),
\end{equation}
where the $V_i, W_i$'s are compact bisections with disjoint source and range, so that $\pi_{\widehat{V_i}}, \pi_{\widehat{W_i}}$ are transpositions.
Now if $I_{\mathcal{H}, n} \left( \pi_U \right) = 0$, then each $\sum_{i \in \mathcal{I}_v} m_{i} = 0$, in which case~\eqref{eq:iterate} says that 
\[  \pi_{U_\mathcal{H}} \left( \pi_{\widehat{V_1}} \pi_{\widehat{W_1}} \cdots \pi_{\widehat{V_K}} \pi_{\widehat{W_K}} \right) = \id_{E_A^\infty}.  \]
This shows that $\pi_{U_\mathcal{H}}  \in \mathcal{T}(\mathcal{H}_{A,B,n})$ and hence $\pi_U = \pi_{U_\mathcal{H}} \pi_{U_A} \in  \mathcal{T}(\mathcal{H}_{A,B,n})$ too. 
\end{proof}

\begin{proposition}\label{prop:HTR}
The groupoid $\mathcal{H}_{A,B}$ has Property~TR.
\end{proposition}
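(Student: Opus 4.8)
The plan is to deduce Property~TR for $\mathcal{H}_{A,B}$ from the finite-level version already obtained in Lemma~\ref{lem:HnTR}, exploiting that $\mathcal{H}_{A,B}$ is the increasing union $\cup_{n} \mathcal{H}_{A,B,n}$ and that homology commutes with this union via~\eqref{eq:limitH}. Since the inclusion $\mathcal{T}(\mathcal{H}_{A,B}) \subseteq \ker(I_\mathcal{H})$ always holds, it suffices to prove the reverse inclusion. So I would start with an arbitrary $\alpha \in \left\llbracket \mathcal{H}_{A,B} \right\rrbracket$ satisfying $I_\mathcal{H}(\alpha) = 0$ and aim to show $\alpha \in \mathcal{T}(\mathcal{H}_{A,B})$.

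First I would write $\alpha = \pi_U$ for a full bisection $U \subseteq \mathcal{H}_{A,B}$. As $U$ is compact it is a finite disjoint union of basic bisections $Z(\mu_i,m_i,\nu_i)$ with $\left|\mu_i\right| = \left|\nu_i\right|$; applying the decomposition~\eqref{eq:decompose} to the shorter pieces and using the nesting $\mathcal{H}_{A,B,n} \subseteq \mathcal{H}_{A,B,n+1}$, I can rewrite everything at a common level, so that $U \subseteq \mathcal{H}_{A,B,n}$ for some $n \in \mathbb{N}$. Since the unit space is $E_A^\infty$ at every level, $U$ is a full bisection of each $\mathcal{H}_{A,B,n'}$ with $n' \geq n$, and hence $\pi_U$ defines an element of $\left\llbracket \mathcal{H}_{A,B,n'} \right\rrbracket$ for all such $n'$.

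The crux is to transport the vanishing of $I_\mathcal{H}(\pi_U)$ in the inductive limit down to a finite level. By naturality of the index map under the open inclusions $\iota_{n,n'} \colon \mathcal{H}_{A,B,n} \hookrightarrow \mathcal{H}_{A,B,n'}$ and $\iota_n \colon \mathcal{H}_{A,B,n} \hookrightarrow \mathcal{H}_{A,B}$ (both induced on homology by extension-by-zero of compactly supported functions, which sends $1_U$ to $1_U$), one has $I_{\mathcal{H},n'}(\pi_U) = H_1(\iota_{n,n'})\bigl(I_{\mathcal{H},n}(\pi_U)\bigr)$ as well as $I_\mathcal{H}(\pi_U) = H_1(\iota_n)\bigl(I_{\mathcal{H},n}(\pi_U)\bigr)$. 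Invoking the identification $H_1(\mathcal{H}_{A,B}) \cong \varinjlim\bigl(H_1(\mathcal{H}_{A,B,n}), H_1(\iota_n)\bigr)$ from~\eqref{eq:limitH}, the hypothesis $I_\mathcal{H}(\pi_U) = 0$ forces the class $I_{\mathcal{H},n}(\pi_U)$ to be annihilated at some finite stage $n' \geq n$; that is, $I_{\mathcal{H},n'}(\pi_U) = H_1(\iota_{n,n'})\bigl(I_{\mathcal{H},n}(\pi_U)\bigr) = 0$.

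Finally I would apply Lemma~\ref{lem:HnTR} to $\mathcal{H}_{A,B,n'}$: since that groupoid has Property~TR and $I_{\mathcal{H},n'}(\pi_U) = 0$, it follows that $\pi_U \in \mathcal{T}(\mathcal{H}_{A,B,n'})$. As every transposition in $\mathcal{H}_{A,B,n'}$ is a fortiori a transposition in $\mathcal{H}_{A,B}$, we get $\mathcal{T}(\mathcal{H}_{A,B,n'}) \subseteq \mathcal{T}(\mathcal{H}_{A,B})$, whence $\alpha = \pi_U \in \mathcal{T}(\mathcal{H}_{A,B})$, as desired. I expect the inductive-limit step to be the main obstacle: one must be careful that ``zero in the limit'' only guarantees vanishing after pushing forward to a sufficiently large finite level, and that naturality of the index map is precisely what lets us identify that pushed-forward class with $I_{\mathcal{H},n'}(\pi_U)$.
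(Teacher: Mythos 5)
Your proposal is correct and is essentially the paper's own proof: both reduce $\pi_U$ to an element of $\left\llbracket \mathcal{H}_{A,B,n} \right\rrbracket$ for some finite $n$ (via compactness of the unit space and the nesting of the $\mathcal{H}_{A,B,n}$), then use the inductive limit description~\eqref{eq:limitH} to conclude that $I_{\mathcal{H},n'}(\pi_U) = 0$ at some finite stage $n' \geq n$, and finally invoke Lemma~\ref{lem:HnTR} together with the inclusion $\mathcal{T}(\mathcal{H}_{A,B,n'}) \subseteq \mathcal{T}(\mathcal{H}_{A,B})$. The only difference is that you spell out the naturality of the index map under the open inclusions, which the paper leaves implicit.
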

\begin{proof}
Since $\mathcal{H}_{A,B} = \cup_{n=1}^\infty \mathcal{H}_{A,B,n}$ and ${\mathcal{H}_{A,B,n}}^{(0)} = \mathcal{H}_{A,B}^{(0)}$ is compact, we also have that ${\left\llbracket \mathcal{H}_{A,B} \right\rrbracket = \cup_{n=1}^\infty \left\llbracket \mathcal{H}_{A,B,n} \right\rrbracket}$. Suppose that $I_\mathcal{H}(\pi_U) = 0 \in H_1(\mathcal{H}_{A,B})$ for some $\pi_U \in \left\llbracket \mathcal{H}_{A,B} \right\rrbracket$.
We have $\pi_U \in \left\llbracket \mathcal{H}_{A,B,n} \right\rrbracket$ for some $n$. By~\eqref{eq:limitH} we must have $I_{\mathcal{H}, n'} \left( \pi_U \right) = 0$ for some $n' \geq n$. The result now follows from Lemma~\ref{lem:HnTR}.
\end{proof}

\begin{remark}
Even though $\mathcal{H}_{A,B}$ is minimal and has Property~TR, Proposition~4.5 in~\cite{MatProd} does not apply, because $\mathcal{H}_{A,B}$ is not purely infinite and generally not principal. 
\end{remark}

Recall the exact sequence~\eqref{eq:exactSubsequence} from the previous section, as it is going to be used in the proofs of the next two results. The following lemma is inspired by~\cite[Lemma~4.7]{MatProd}.


\begin{lemma}\label{lemma:HtoG}
Let $U \subseteq \mathcal{H}_{A,B}$ be a full bisection and view $\pi_U$ as an element of $\left\llbracket \mathcal{G}_{A,B} \right\rrbracket$. If $I(\pi_U) = 0 \in H_1\left(\mathcal{G}_{A,B}\right)$, then $\pi_U \in \mathcal{T}\left(\mathcal{G}_{A,B}\right)$.
\end{lemma}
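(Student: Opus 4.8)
The plan is to reduce the claim to Property~TR for the kernel groupoid $\mathcal{H}_{A,B}$ (Proposition~\ref{prop:HTR}) by correcting $\pi_U$ by an explicit product of transpositions of $\mathcal{G}_{A,B}$ that happens to lie in $\left\llbracket \mathcal{H}_{A,B} \right\rrbracket$. First I would feed the hypothesis $I(\pi_U)=0$ into the machinery of Section~\ref{sec:hom}. Since $U\subseteq\mathcal{H}_{A,B}$, Lemma~\ref{lem:PhiFormula} gives $0=I(\pi_U)=\Phi(I_\mathcal{H}(\pi_U))$, so by exactness of~\eqref{eq:exactSubsequence} we have $I_\mathcal{H}(\pi_U)\in\ker\Phi=\im\rho^1$. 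Write $I_\mathcal{H}(\pi_U)=\rho^1(\beta)$ and expand $\beta=\sum_{i=1}^k n_i\left[1_{Z(\mu_i,1,\mu_i)}\right]$ using the spanning set~\eqref{eq:H1span} together with~\eqref{eq:limitH}. By Lemma~\ref{lem:rho1Formula}, for each $i$ and any edge $e_i$ with $r(e_i)=s(\mu_i)$ we have $\rho^1\left(\left[1_{Z(\mu_i,1,\mu_i)}\right]\right)=\left[1_{Z(\mu_i,1,\mu_i)}\right]-\left[1_{Z(e_i\mu_i,1,e_i\mu_i)}\right]$.

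The heart of the argument is to realize each such difference as the index $I_\mathcal{H}$ of a product of two transpositions of $\mathcal{G}_{A,B}$ that lands inside $\left\llbracket \mathcal{H}_{A,B} \right\rrbracket$. Choosing $e_i$ so that $Z(e_i\mu_i)\cap Z(\mu_i)=\emptyset$ (possible since $A$ is irreducible and not a permutation matrix, after replacing $\mu_i$ by an equivalent path if necessary), consider the two bisections $W_1\coloneqq Z(e_i\mu_i,1,\mu_i)$ and $W_2\coloneqq Z(e_i\mu_i,0,\mu_i)$. Both have disjoint source $Z(\mu_i)$ and range $Z(e_i\mu_i)$, hence determine transpositions $\tau_1,\tau_2\in\mathcal{T}(\mathcal{G}_{A,B})$. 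A direct computation with the partial-homeomorphism descriptions shows that $\tau_1\tau_2$ acts as $\mu_i y\mapsto\mu_i\kappa_{-1}(y)$ on $Z(\mu_i)$, as $e_i\mu_i z\mapsto e_i\mu_i\kappa_1(z)$ on $Z(e_i\mu_i)$, and as the identity elsewhere. In particular it is level-preserving, so $\tau_1\tau_2=\pi_{V_i}$ with $V_i=Z(\mu_i,-1,\mu_i)\sqcup Z(e_i\mu_i,1,e_i\mu_i)\sqcup(\text{identity})\subseteq\mathcal{H}_{A,B}$, and using~\eqref{eq:indicatorHomology} we obtain $I_\mathcal{H}(\pi_{V_i})=\left[1_{Z(e_i\mu_i,1,e_i\mu_i)}\right]-\left[1_{Z(\mu_i,1,\mu_i)}\right]=-\rho^1\left(\left[1_{Z(\mu_i,1,\mu_i)}\right]\right)$.

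Then I would set $\pi_V\coloneqq\prod_{i=1}^k\pi_{V_i}^{-n_i}$. Being a product of (powers of) transpositions, $\pi_V\in\mathcal{T}(\mathcal{G}_{A,B})$, and since each $\pi_{V_i}\in\left\llbracket \mathcal{H}_{A,B}\right\rrbracket$ we also have $\pi_V\in\left\llbracket \mathcal{H}_{A,B}\right\rrbracket$. As $I_\mathcal{H}$ is a homomorphism into the abelian group $H_1(\mathcal{H}_{A,B})$, it follows that $I_\mathcal{H}(\pi_V)=\sum_{i=1}^k n_i\,\rho^1\left(\left[1_{Z(\mu_i,1,\mu_i)}\right]\right)=\rho^1(\beta)=I_\mathcal{H}(\pi_U)$. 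Consequently $I_\mathcal{H}\left(\pi_U\pi_V^{-1}\right)=0$, and Property~TR for $\mathcal{H}_{A,B}$ (Proposition~\ref{prop:HTR}) yields $\pi_U\pi_V^{-1}\in\mathcal{T}(\mathcal{H}_{A,B})\subseteq\mathcal{T}(\mathcal{G}_{A,B})$. Therefore $\pi_U=\left(\pi_U\pi_V^{-1}\right)\pi_V\in\mathcal{T}(\mathcal{G}_{A,B})$, as desired.

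The main obstacle is the middle step: one must verify that the two level-changing transpositions $\tau_1,\tau_2$ compose to a \emph{level-preserving} element of $\left\llbracket \mathcal{H}_{A,B}\right\rrbracket$ whose index is exactly $-\rho^1$ of a generator, which is where the formulas for the product and for $\kappa$ are used. A secondary point requiring care is that $e_i$ can always be chosen so that $Z(e_i\mu_i)$ and $Z(\mu_i)$ are disjoint (the only delicate case being $\mu_i$ a power of a single loop, resolved using that $A$ is irreducible and not a permutation matrix); everything after the construction of $\pi_V$ is then routine bookkeeping with the homomorphism property of $I_\mathcal{H}$.
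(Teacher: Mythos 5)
Your proposal is correct, and it shares the paper's overall frame: use Lemma~\ref{lem:PhiFormula} and exactness of~\eqref{eq:exactSubsequence} to see that $I_\mathcal{H}(\pi_U)\in\im(\rho^1)$, manufacture an element of $\mathcal{T}(\mathcal{G}_{A,B})\cap\left\llbracket\mathcal{H}_{A,B}\right\rrbracket$ with the same $I_\mathcal{H}$-image, and close with Property~TR for $\mathcal{H}_{A,B}$ (Proposition~\ref{prop:HTR}). Where you genuinely diverge is in the middle step. The paper lifts the class $[f]$ with $\rho^1([f])=I_\mathcal{H}(\pi_U)$ to a group element $\beta\in\left\llbracket\mathcal{H}_{A,B}\right\rrbracket$ via surjectivity of the index map (Proposition~\ref{prop:IHonto}), normalizes it to a bisection $\pi_W$ with one summand $Z(\mu_{i_v},l_{i_v},\mu_{i_v})$ per vertex using the argument of Lemma~\ref{lem:HnTR}, and then obtains the desired element as the conjugation product $\pi_W\tau_T\pi_W^{-1}\tau_T$ built from a \emph{single} transposition $\tau_T$. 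You never lift $[f]$ at all: you expand it in the spanning classes~\eqref{eq:H1span} and realize $-\rho^1$ of each generator as $I_\mathcal{H}(\pi_{V_i})$ for the explicit product of two transpositions $\pi_{V_i}=\tau_1\tau_2$. Your key computation checks out at the level of bisections: $\widehat{W_1}\cdot\widehat{W_2}=Z(\mu_i,-1,\mu_i)\sqcup Z(e_i\mu_i,1,e_i\mu_i)\sqcup\bigl(E_A^\infty\setminus(Z(\mu_i)\cup Z(e_i\mu_i))\bigr)$, and combining~\eqref{eq:indicatorHomology} with the vanishing in $H_1$ of classes supported on the unit space gives exactly $I_\mathcal{H}(\pi_{V_i})=-\rho^1\left(\left[1_{Z(\mu_i,1,\mu_i)}\right]\right)$, matching Lemma~\ref{lem:rho1Formula}. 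Your route buys two simplifications: it bypasses Proposition~\ref{prop:IHonto} and the Lemma~\ref{lem:HnTR} normalization entirely, and, because each $\pi_{V_i}$ is constructed independently and the index computation only uses the homomorphism property into the abelian group $H_1(\mathcal{H}_{A,B})$, you only need $Z(e_i\mu_i)\cap Z(\mu_i)=\emptyset$ for each $i$ separately, rather than the mutual disjointness of the whole family $\left\{\mu_{i_v},e_v\mu_{i_v}\right\}$ that the paper must arrange by choosing $n$ large. Your handling of the delicate case is also sound: failure of disjointness forces $\mu_i=e_i^{\,k}$ for a loop $e_i$, and if that loop were the only edge into $s(\mu_i)$ then either $N\geq 2$ and no other vertex reaches $s(\mu_i)$ (contradicting irreducibility) or $N=1$ and $A$ is a permutation matrix; so a good $e_i$ always exists, without even needing to replace $\mu_i$.
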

\begin{proof}
Set $\alpha \coloneqq \pi_U$. By Lemma~\ref{lem:PhiFormula} we have $\Phi \left( I_\mathcal{H}(\alpha) \right) = I(\alpha)  = 0$, so  $I_\mathcal{H}(\alpha) \in \ker(\Phi) = \im(\rho^1)$. Let $[f] \in H_1\left(\mathcal{H}_{A,B}\right)$ be such that $I_\mathcal{H}(\alpha) = \rho^1([f])$. By Proposition~\ref{prop:IHonto} there is some $\beta \in \left\llbracket \mathcal{H}_{A,B} \right\rrbracket$ such that $I_\mathcal{H}(\beta) = [f]$. 

Now $\beta = \pi_V$ for some full bisection $V = \sqcup_{i=1}^k Z(\mu_i, m_i, \nu_i) \subseteq \mathcal{H}_{A,B}$, where we have ${E_A^\infty = \sqcup_{i=1}^k Z(\mu_i) = \sqcup_{i=1}^k Z(\nu_i)}$ and $\left| \mu_i \right| = \left| \nu_i \right| = n$ for all $i$, for some $n \in \mathbb{N}$. Employing the same argument and notation as in the proof of Lemma~\ref{lem:HnTR} we can find a product of transpositions $\beta_0 \in \mathcal{T} \left( \mathcal{H}_{A,B} \right)$ such that $\beta \beta_0 = \pi_W$, where $W =   \left(\sqcup_{v \in \mathcal{I}^0}  Z\left( \mu_{i_v}, l_{i_v}, \mu_{i_v} \right) \right) \sqcup A$ with $A \subseteq \mathcal{H}_{A,B}^{(0)}$ and $ l_{i_v} \in \mathbb{Z}$. In particular, the paths $\mu_{i_v}$ all have different ranges.

For each $v \in \mathcal{I}^0$ pick an edge $e_v \in E_A^1$ with $r(e_v) = s \left( \mu_{i_v} \right) \neq s(e_v)$, so that $e_v$ is not a loop. 
Then for each $v$, the path $e_v \mu_{i_v}$ is disjoint from $\mu_{i_v}$. Since all the $\mu_{i_v}$'s are mutually disjoint, so are all the $e_v \mu_{i_v}$'s too. A priori, it is not guaranteed that $\mu_{i_v}$ is disjoint from $e_w \mu_{i_w}$ when~${v \neq w \in \mathcal{I}^0}$. However, this (i.e.\ that $\mu_{i_v} \nleq e_w \mu_{i_w}$) can be arranged if we at the start ensure that $n$ is chosen large enough (which in turn can be done by~\eqref{eq:decompose}) so that $\left| E_A^{n-1} v \right| \geq 2N$ for each $v \in E_A^0$. For this gives enough options when choosing the distinguished indices $i_v$ to ensure that the total collection of paths $\cup_{v \in \mathcal{I}^0} \left\{ \mu_{i_v}, e_v \mu_{i_v} \right\}$ are mutually disjoint (independent of the choice of the $e_v$'s).   

By the above paragraph we may define the compact bisection \[T \coloneqq \sqcup_{v \in \mathcal{I}^0} Z \left(e_v \mu_{i_v} , 0, \mu_{i_v} \right) \subseteq \mathcal{G}_{A,B},\] which has disjoint source and range. Define $\tau_T \coloneqq \pi_{\widehat{T}} \in \mathcal{T}(\mathcal{G}_{A,B})$. Observe that we have $\widehat{T} \cdot W \cdot \widehat{T} = \left(\sqcup_{v \in \mathcal{I}^0}  Z\left(e_v \mu_{i_v}, l_{i_v}, e_v \mu_{i_v} \right) \right) \sqcup A'$ with $A' \subseteq \mathcal{H}_{A,B}^{(0)}$. Combining this with the description of $\rho^1$ from Lemma~\ref{lem:rho1Formula}  we see that
\begin{align}\label{eq:rho1W}
\rho^1 \left(  I_\mathcal{H} \left( \pi_W \right) \right) &= \rho^1 \left( \left[ 1_W \right] \right) =  \left[ 1_W \right] - \left[ 1_{\widehat{T} \cdot W \cdot \widehat{T}} \right] \nonumber \\ &=  I_\mathcal{H} \left( \pi_W \right) - I_\mathcal{H} \left(\tau_T \pi_W \tau_T \right) = I_\mathcal{H} \left(\pi_W  \tau_T \pi_W^{-1} \tau_T \right). 
\end{align}
At the same time we have
\begin{equation}\label{eq:Wbeta}
I_\mathcal{H} \left( \pi_W \right)  = I_\mathcal{H} \left( \beta \right) = [f],
\end{equation}
since $\pi_W = \beta \beta_0$ and $\beta_0 \in \mathcal{T} \left( \mathcal{H}_{A,B} \right)$. Next we observe that 
\[ W \cdot \widehat{T} \cdot W^{-1} = \bigsqcup_{v \in \mathcal{I}^0} \left(  Z\left(e_v \mu_{i_v}, - l_{i_v},  \mu_{i_v} \right) \sqcup Z\left( \mu_{i_v}, l_{i_v}, e_v \mu_{i_v} \right) \right) \sqcup A'',    \]
where $A'' \subseteq \mathcal{G}_{A,B}^{(0)}$. This actually shows that $\pi_W  \tau_T \pi_W^{-1} \in \mathcal{T} \left( \mathcal{G}_{A,B} \right)$, since  $W \cdot \widehat{T} \cdot W^{-1}  = \pi_{\widehat{R}}$, where $R = \sqcup_{v \in \mathcal{I}^0}   Z\left( \mu_{i_v}, l_{i_v}, e_v \mu_{i_v} \right)$. Define the element $\gamma \coloneqq \pi_W  \tau_T \pi_W^{-1} \tau_T \in\mathcal{T} \left( \mathcal{G}_{A,B} \right)$. Equations~\eqref{eq:rho1W} and~\eqref{eq:Wbeta} now says that 
\[ I_\mathcal{H} \left( \gamma \right)  =  \rho^1 \left(  I_\mathcal{H} \left( \pi_W \right) \right) = \rho^1 \left( \left[ f \right] \right) =   I_\mathcal{H} \left( \alpha \right).   \]
This means that $ I_\mathcal{H} \left( \alpha \gamma^{-1} \right) = 0 \in H_1 \left( \mathcal{H}_{A,B} \right)$, and hence $\alpha \gamma^{-1} \in \mathcal{T} \left( \mathcal{H}_{A,B} \right)$ by Proposition~\ref{prop:HTR}. But then $\alpha \in \mathcal{T} \left( \mathcal{G}_{A,B} \right)$ and we are done.
\end{proof}

%


\begin{theorem}\label{thm:TRKEP}
The Katsura--Exel--Pardo groupoid $\mathcal{G}_{A,B}$ has Property~TR.
\end{theorem}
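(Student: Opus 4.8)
The plan is to establish Property~TR for $\mathcal{G}_{A,B}$ by exploiting the decomposition $\left\llbracket \mathcal{G}_{A,B} \right\rrbracket = \left\llbracket \mathcal{H}_{A,B} \right\rrbracket \left\llbracket \mathcal{G}_{A,0} \right\rrbracket$ together with the two structural results already proved: Lemma~\ref{lemma:HtoG}, which handles elements coming from the kernel groupoid $\mathcal{H}_{A,B}$, and the known validity of Property~TR for the SFT-groupoid $\mathcal{G}_{A,0} \cong \mathcal{G}_A$. So let $\alpha \in \left\llbracket \mathcal{G}_{A,B} \right\rrbracket$ satisfy $I(\alpha) = 0 \in H_1(\mathcal{G}_{A,B})$; the goal is to produce a factorization $\alpha = \alpha_\mathcal{H} \cdot \alpha_A$ with $\alpha_\mathcal{H} \in \left\llbracket \mathcal{H}_{A,B} \right\rrbracket$ and $\alpha_A \in \left\llbracket \mathcal{G}_{A,0} \right\rrbracket$, and then to argue that each factor (or a suitable modification) lands in $\mathcal{T}(\mathcal{G}_{A,B})$.

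First I would justify the decomposition itself: writing $\alpha = \pi_U$ for a full bisection $U = \sqcup_i Z(\mu_i, m_i, \nu_i)$, one separates the ``degree'' part recorded by the cocycle $c$ from the ``internal'' $\ZZ$-part. Concretely, splitting each $Z(\mu_i, m_i, \nu_i)$ as the composite $Z(\mu_i, m_i, \mu_i) \cdot Z(\mu_i, 0, \nu_i)$ (after refining via~\eqref{eq:decompose} so the lengths match) exhibits $U = U_\mathcal{H} \cdot U_A$, where $U_\mathcal{H} \subseteq \mathcal{H}_{A,B}$ collects the diagonal pieces carrying the nontrivial $m_i$ and $U_A \subseteq \mathcal{G}_{A,0}$ records the length-changing, $m=0$ permutation part. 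This mirrors exactly the split used in the proof of Lemma~\ref{lem:HnTR}. Thus $\alpha = \pi_{U_\mathcal{H}} \pi_{U_A}$ with $\pi_{U_\mathcal{H}} \in \left\llbracket \mathcal{H}_{A,B} \right\rrbracket$ and $\pi_{U_A} \in \left\llbracket \mathcal{G}_{A,0} \right\rrbracket$.

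The next step is the index bookkeeping under $\Psi$. Since $\alpha_A = \pi_{U_A}$ comes from $\mathcal{G}_{A,0}$, I would apply $\Psi$ to $I(\alpha)$ and use the naturality captured in Lemmas~\ref{lem:PhiFormula} and~\ref{lem:PsiFormula}: the $\mathcal{H}$-part is killed by $\Psi$ (its bisections have $|\mu_i| = |\nu_i|$, so by Lemma~\ref{lem:PsiFormula} the exponents $\varphi^{(0)} = 0$), whence $\Psi(I(\alpha)) = \Psi(I(\alpha_A))$. From $I(\alpha) = 0$ we get $\Psi(I(\alpha_A)) = 0$, and then Lemma~\ref{lem:PsiPi} gives $I_A(\alpha_A) = 0 \in H_1(\mathcal{G}_{A,0})$. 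By Property~TR for the irreducible SFT-groupoid $\mathcal{G}_A$ (Matui, \cite{MatTFG}), it follows that $\alpha_A \in \mathcal{T}(\mathcal{G}_{A,0}) \subseteq \mathcal{T}(\mathcal{G}_{A,B})$. Consequently $\alpha_\mathcal{H} = \alpha \alpha_A^{-1}$ satisfies $I(\alpha_\mathcal{H}) = I(\alpha) - I(\alpha_A)$ in $H_1(\mathcal{G}_{A,B})$; since $\alpha_A \in \mathcal{T}(\mathcal{G}_{A,B}) \subseteq \ker(I)$ we get $I(\alpha_\mathcal{H}) = 0$, and $\alpha_\mathcal{H} = \pi_{U_\mathcal{H}}$ arises from a full bisection in $\mathcal{H}_{A,B}$, so Lemma~\ref{lemma:HtoG} applies directly to yield $\alpha_\mathcal{H} \in \mathcal{T}(\mathcal{G}_{A,B})$. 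Therefore $\alpha = \alpha_\mathcal{H} \alpha_A \in \mathcal{T}(\mathcal{G}_{A,B})$, establishing Property~TR.

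The main obstacle I anticipate is the careful verification that the factorization $U = U_\mathcal{H} \cdot U_A$ genuinely produces honest \emph{full} bisections in the respective subgroupoids and that the identity $\Psi(I(\alpha)) = \Psi(I(\alpha_A))$ is clean — in particular confirming that $U_A$ can be taken inside $\mathcal{G}_{A,0}$ (all $m=0$) with $E_A^\infty = \sqcup_i Z(\mu_i) = \sqcup_i Z(\nu_i)$, so that Lemma~\ref{lem:PsiPi}'s hypotheses are literally met. There is also a subtlety in matching path lengths across the $\mu_i$ and $\nu_i$ before splitting, which is resolved by the refinement~\eqref{eq:decompose} as in Lemma~\ref{lem:HnTR}. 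Once the bookkeeping is pinned down, the two ingredients (Lemma~\ref{lemma:HtoG} for the kernel and Matui's SFT result via Lemma~\ref{lem:PsiPi} for the base) slot together to give the conclusion without further difficulty.
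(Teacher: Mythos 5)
Your proposal is correct and follows essentially the same route as the paper's proof: the same factorization $\pi_U = \pi_{U_\mathcal{H}}\pi_{U_A}$ into a kernel part and an SFT part, the same use of Lemma~\ref{lem:PsiPi} together with Matui's Property~TR for $\mathcal{G}_{A,0}$, and Lemma~\ref{lemma:HtoG} to handle $\pi_{U_\mathcal{H}}$. The only (harmless) deviation is that you kill $\Psi\left(I\left(\pi_{U_\mathcal{H}}\right)\right)$ via the explicit formula of Lemma~\ref{lem:PsiFormula} (the exponents are $\varphi^{(0)}=0$), whereas the paper invokes Lemma~\ref{lem:PhiFormula} and the exactness of~\eqref{eq:exactSubsequence}; also, the length-matching refinement you mention is not actually needed for this factorization.
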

\begin{proof}
Let $U \subseteq \mathcal{G}_{A,B}$ be a full bisection. Then we can write $U = \sqcup_{i=1}^k Z(\mu_i, m_i, \nu_i)$, where~${E_A^\infty = \sqcup_{i=1}^k Z(\mu_i) = \sqcup_{i=1}^k Z(\nu_i)}$ (but the paths $\mu_i$ and $\nu_i$ may now have different lengths).
As in the proof of Lemma~\ref{lem:HnTR} we define the full bisections 
\[ U_\mathcal{H} \coloneqq \sqcup_{i=1}^k Z(\mu_i, m_i, \mu_i) \subseteq \mathcal{H}_{A,B}     		 \quad \text{ and } \quad 		  U_A \coloneqq \sqcup_{i=1}^k Z(\mu_i, 0, \nu_i) \subseteq \mathcal{G}_{A,0},   \]
where we view both $\mathcal{H}_{A,B}$ and $\mathcal{G}_{A,0}$ as subgroupoids of $\mathcal{G}_{A,B}$. Recall that we have $U_\mathcal{H}  \cdot U_A = U$ and $\pi_U = \pi_{U_\mathcal{H}} \pi_{U_A} \in \left\llbracket \mathcal{G}_{A,B} \right\rrbracket$. We will be considering all three index maps: 
\begin{align*}
 I \colon & \left\llbracket \mathcal{G}_{A,B} \right\rrbracket \to H_1(\mathcal{G}_{A,B}), \\
 I_\mathcal{H} \colon & \left\llbracket \mathcal{H}_{A,B} \right\rrbracket \to H_1(\mathcal{H}_{A,B}), \\ 
 I_A \colon & \left\llbracket \mathcal{G}_{A,0} \right\rrbracket \to  H_1(\mathcal{G}_{A,0}). 
\end{align*}
We have that $I \left( \pi_U \right) =  I \left( \pi_{U_\mathcal{H}} \right) + I \left( \pi_{U_A} \right) \in H_1 \left( \mathcal{G}_{A,B} \right)$, but by viewing ${\pi_{U_\mathcal{H}} \in \left\llbracket \mathcal{H}_{A,B} \right\rrbracket}$ and~${\pi_{U_A} \in \left\llbracket \mathcal{G}_{A,0} \right\rrbracket}$ we may also consider  
$I_\mathcal{H} \left( \pi_{U_\mathcal{H}} \right)$ and $I_A \left( \pi_{U_A} \right)$ as elements of~$H_1 \left( \mathcal{H}_{A,B} \right)$ and~$H_1 \left( \mathcal{G}_{A,0} \right)$, respectively.
The idea is to show that if~${I \left( \pi_U \right) = 0}$, then both $I \left( \pi_{U_\mathcal{H}} \right)$ and $I_A \left( \pi_{U_A} \right)$ vanish as well.
For then we may appeal to Lemma~\ref{lemma:HtoG} and Property~TR for~${\mathcal{G}_{A,0} \cong \mathcal{G}_A}$, respectively, to conclude that~$\pi_U$ itself must be a product of transpositions.

Assume now that $I \left( \pi_U \right) = 0 \in H_1 \left( \mathcal{G}_{A,B} \right)$.   
By Lemma~\ref{lem:PhiFormula} and the exactness of~\eqref{eq:exactSubsequence} have
\[\Psi \left( I \left( \pi_{U_\mathcal{H}} \right) \right) = \Psi \left(\Phi \left(I_\mathcal{H} \left( \pi_{U_\mathcal{H}} \right)  \right) \right) = 0.\]
This means that 
$\Psi \left( I \left( \pi_{U_A} \right) \right) = \Psi \left( I \left( \pi_U \right) \right) = 0$. From Lemma~\ref{lem:PsiPi} we conclude that ${I_A \left( \pi_{U_A} \right) = 0 \in H_1 \left( \mathcal{G}_{A,0} \right)}$. Hence $\pi_{U_A} \in \mathcal{T} \left( \mathcal{G}_{A,0} \right) \subseteq \mathcal{T} \left( \mathcal{G}_{A,B} \right)$ by appealing to Property~TR for SFT-groupoids~\cite{MatTFG}. It follows  that $I \left( \pi_{U_A} \right) = 0 \in H_1 \left( \mathcal{G}_{A,B} \right)$ too, and then ${I \left( \pi_{U_\mathcal{H}} \right) = I \left( \pi_U \right)  = 0 \in H_1 \left( \mathcal{G}_{A,B} \right)}$. By Lemma~\ref{lemma:HtoG} we then get $\pi_{U_\mathcal{H}} \in \mathcal{T} \left( \mathcal{G}_{A,B} \right)$ as well. This finishes the proof, since $\pi_U = \pi_{U_\mathcal{H}} \pi_{U_A}$.
\end{proof}

\begin{corollary}\label{cor:AH}
The AH~conjecture holds for the Katsura--Exel--Pardo groupoid $\mathcal{G}_{A,B}$ whenever the matrices $A,B$ satisfy the AH~criteria and $A$ is irreducible. 
\end{corollary}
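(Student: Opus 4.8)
The plan is to deduce Corollary~\ref{cor:AH} directly from Theorem~\ref{thm:TRKEP} together with Proposition~\ref{prop:PI} and the reduction already set up in Section~\ref{sec:AH}. First I would check that the hypotheses match: if $A$ is irreducible then $A$ is essential and, since we are in the nontrivial case, a non-permutation matrix, and by Proposition~\ref{prop:minimal} the groupoid $\mathcal{G}_{A,B}$ is minimal. Together with the assumption that $A,B$ satisfy the AH~criteria (so that $N<\infty$ and $\mathcal{G}_{A,B}$ is Hausdorff, effective and minimal), this places us squarely within the hypotheses of both Theorem~\ref{thm:TRKEP} and Proposition~\ref{prop:PI}. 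In particular $\mathcal{G}_{A,B}$ is an effective minimal second countable Hausdorff ample groupoid whose unit space $E_A^\infty$ is a Cantor space, so the statement of the AH~conjecture is meaningful for it.

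Next I would invoke Proposition~\ref{prop:PI} to conclude that $\mathcal{G}_{A,B}$ is purely infinite, which by \cite[Theorem~5.2]{MatTFG} makes the index map $I \colon \left\llbracket \mathcal{G}_{A,B} \right\rrbracket \to H_1(\mathcal{G}_{A,B})$ surjective. This surjectivity of $I_{\text{ab}}$ gives exactness of the AH~sequence at the right-hand term $H_1(\mathcal{G}_{A,B})$ for free. As recalled after the definition of Property~TR, for purely infinite groupoids the AH~conjecture is equivalent to Property~TR (see \cite[Remark~4.12]{NO20}); the point is that pure infiniteness handles surjectivity of the index map, while Property~TR is exactly what controls the remaining exactness of the sequence at $\left\llbracket \mathcal{G}_{A,B} \right\rrbracket_{\text{ab}}$, namely the identity $\mathcal{T}(\mathcal{G}_{A,B}) = \ker(I)$ which translates into $\operatorname{im}(j) = \ker(I_{\text{ab}})$.

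Finally I would simply apply Theorem~\ref{thm:TRKEP}, which asserts that $\mathcal{G}_{A,B}$ has Property~TR under precisely the standing assumption of Section~\ref{sec:TR}, i.e.\ that $A,B$ satisfy the AH~criteria with $A$ essential. Combining Property~TR with the surjectivity of the index map from Proposition~\ref{prop:PI} via the equivalence in \cite[Remark~4.12]{NO20} yields the exactness of the AH~sequence, which is the assertion of the corollary.

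I do not expect any genuine obstacle here, since all the real work has been front-loaded into Theorem~\ref{thm:TRKEP} (Property~TR) and Proposition~\ref{prop:PI} (pure infiniteness, hence surjectivity of $I$). The only thing to be careful about is the bookkeeping of hypotheses: one must confirm that irreducibility of $A$ (together with the AH~criteria) really does supply all the conditions---essentiality, non-permutation, minimality---needed to apply the standing assumption of Section~\ref{sec:TR} and Proposition~\ref{prop:PI}, and that the equivalence ``purely infinite $+$ Property~TR $\Rightarrow$ AH~conjecture'' from \cite[Remark~4.12]{NO20} is cited correctly. The proof is therefore essentially a one-line deduction assembling these pieces.
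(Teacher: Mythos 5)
Your proposal is correct and follows essentially the same route as the paper: the paper's proof likewise combines Property~TR (Theorem~\ref{thm:TRKEP}), pure infiniteness (Proposition~\ref{prop:PI}), and minimality, citing \cite[Theorem~4.4]{MatProd} for the final deduction, which is the same equivalence you invoke via \cite[Remark~4.12]{NO20}. Your hypothesis bookkeeping (irreducibility giving essentiality and, with effectiveness, a non-permutation matrix) matches the standing assumption of Section~\ref{sec:TR}, so there is no gap.
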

\begin{proof}
Since $\mathcal{G}_{A,B}$ has Property~TR (Theorem~\ref{thm:TRKEP}) and is purely infinite (Proposition~\ref{prop:PI}) and minimal, the result follows from~\cite[Theorem~4.4]{MatProd}.
\end{proof}

\begin{remark}
To get rid of the assumption of $A$ being essential, i.e.\ allowing for sources in~$E_A$, we need to prove Property~TR for restrictions, as is done for graph groupoids in~\cite{NO20}.
This should be doable. 
\end{remark}

\section{Finite generation of $\left\llbracket \mathcal{G}_{A,B} \right\rrbracket$}\label{sec:finGen}

In this section we will show that the topological full group $\left\llbracket \mathcal{G}_{A,B} \right\rrbracket$ is finitely generated, under the following hypotheses on $A$ and $B$. 

\begin{assumption}
In this whole section we fix $N \times N$ matrices $A,B$ which satisfy the AH~criteria and where $A$ is essential. In particular $N < \infty$ and $A$ is an irreducible non-permutation matrix. Furthermore, we assume that $\left| B_{i,j} \right| < A_{i,j}$ whenever $A_{i,j} \neq 0$. 
\end{assumption}

In~\cite[Definition~5.1]{Nek18}, Nekrashevych defined the notion of a self-similar group being \emph{contracting}. He showed that for a contracting self-similar group, the topological full group of the associated groupoid of germs is finitely presented. We will extend Nekrashevych's definition to cover the self-similar graphs of Exel and Pardo, and show that the self similar graph associated to matrices $A$ and $B$ as above is contracting. However, we will settle for showing that $\left\llbracket \mathcal{G}_{A,B} \right\rrbracket$ is finitely generated. A crucial ingredient in our argument is the fact that the topological full group $\left\llbracket \mathcal{G}_{A,0} \right\rrbracket$ is finitely generated~\cite{MatTFG}.


\begin{definition}\label{def:contracting}
A self-similar graph $(G, E, \varphi)$ is \emph{contracting} if there exists a finite subset~${\mathcal{N} \subset G}$ with the property that for every $g \in G$ there is some $n \in \mathbb{N}$ such that $\varphi(g, \mu) \in \mathcal{N}$ for all $\mu \in E^{\geq n}$.
\end{definition}

The following rudimentary lemma will be used when showing that the self-similar graph from Section~\ref{sec:KEP} is contracting.

\begin{lemma}\label{lem:inequalities}
Assume that $a,b,m,t \in \ZZ$ are integers satisfying $a \geq 1$ and
\begin{align*}
(b-a) m - a &< a t < (b-a) m + a, \\
1 - 2a & \leq b-a \leq -1.
\end{align*}
Then 
\begin{align*}
\left| m+t \right| & < \left| m \right| \quad  \text{when } \left| m \right| \geq 2a, \\
\left| m+t \right| & \leq  \left| m \right| \quad  \text{when } \left| m \right| < 2a.
\end{align*}
\end{lemma}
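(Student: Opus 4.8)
The plan is to combine the two hypotheses into a single statement that locates $a(m+t)$ inside an interval centered at $bm$, and then extract the contraction estimate by dividing by $a$. First I would add $am$ throughout the first chain of inequalities. Since $am + (b-a)m = bm$, this turns
\[(b-a)m - a < at < (b-a)m + a\]
into
\[bm - a < a(m+t) < bm + a.\]
Next I would observe that the second hypothesis, after adding $a$ to each term, reads $1 - a \leq b \leq a - 1$, that is $\left| b \right| \leq a - 1$. This is the crucial point: it forces the ``contraction factor'' $\left| b \right|/a$ to be strictly less than $1$, and it is exactly here that both inequalities in the hypothesis get used.

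From the displayed interval I would derive $\left| a(m+t) \right| < \left| bm \right| + a$ (a number lying strictly between $bm - a$ and $bm + a$ has absolute value strictly less than $\left| bm \right| + a$). Using $a \geq 1$ and $\left| b \right| \leq a - 1$ this gives
\[a \left| m+t \right| = \left| a(m+t) \right| < \left| b \right| \left| m \right| + a \leq (a-1)\left| m \right| + a,\]
and dividing by $a > 0$ yields the single master estimate
\[\left| m+t \right| < \left| m \right| - \frac{\left| m \right|}{a} + 1.\]

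Finally I would read off both conclusions from this estimate. When $\left| m \right| \geq 2a$ we have $\left| m \right|/a \geq 2$, so the right-hand side is at most $\left| m \right| - 1$, giving the strict inequality $\left| m+t \right| < \left| m \right|$. For the non-strict case I would invoke integrality: since $\left| m \right|/a \geq 0$, the master estimate gives $\left| m+t \right| < \left| m \right| + 1$, and because $\left| m+t \right|$ is an integer this forces $\left| m+t \right| \leq \left| m \right|$ (in fact for every $m$, in particular when $\left| m \right| < 2a$). The only real subtlety is the bookkeeping around this last step: dividing by $a$ leaves the real-valued quantity $\left| m \right|/a$, so one must defer to the integrality of $m+t$ at the right moment to upgrade the strict real inequality into the stated integer bounds, rather than attempting to track division remainders explicitly.
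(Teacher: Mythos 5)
Your proof is correct, but it takes a genuinely different route from the paper. The paper argues by cases: it first assumes $m \geq 0$, plugs in the extreme values of $b-a$ to trap $at$ between $(1-2a)m - a$ and $-m + a$, then splits further on the size of $m$ to squeeze out integer bounds on $t$ itself (e.g.\ $-2m < t < 0$ when $m \geq 2a$), and finally remarks that $m < 0$ is symmetric. You instead recenter the interval at $bm$, so the hypothesis becomes $\left| a(m+t) - bm \right| < a$ together with $\left| b \right| \leq a-1$, and a single application of the triangle inequality yields the master estimate $\left| m+t \right| < \left| m \right| - \left| m \right|/a + 1$, from which both conclusions follow (using integrality of $m+t$ for the non-strict case). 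Your approach buys three things: it eliminates the sign and size case analysis entirely; it makes the conceptual content visible --- the cocycle acts like multiplication by $b/a$ with $\left| b/a \right| < 1$, which is exactly why Lemma~\ref{lem:varphi} and Proposition~\ref{prop:contracting} go through; and it actually proves slightly more, namely that $\left| m+t \right| \leq \left| m \right|$ holds for \emph{every} $m$, and $\left| m+t \right| \leq \left| m \right| - 2$ when $\left| m \right| \geq 2a$. It also sidesteps a small bookkeeping slip in the paper's proof, where the second case is stated as $0 \leq m < a$ rather than $0 \leq m < 2a$ (harmless there, since the inequalities used hold for all $m \geq 0$, but your uniform estimate avoids the issue altogether). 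What the paper's approach buys in exchange is explicit integer bounds on $t$ alone, which your argument never isolates; these are not needed elsewhere, so nothing is lost.
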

\begin{proof}
Assume first that $m \geq 0$. Then 
\[ (b-a) m - a \geq (1-2a) m - a = m - a - 2ma  \]
and
\[(b-a) m + a \leq (-1) m + a = a-m,\]
so 
\[m - a - 2ma < a t < a-m.  \]
Now if $m \geq 2a$, then 
\[ a-m \leq -a \quad \text{and} \quad m - a - 2ma \geq a -2ma = (1-2m)a.  \]
We infer from this that 
\[1-2m < t < -1 \implies -2m < t < 0 \implies \left| m + t  \right| < \left| m \right|.  \]

Next, if $0 \leq m < a$, then 
\[ a-m \leq a \quad \text{and} \quad m - a - 2ma \geq -a -2ma = (-1-2m)a.  \]
And from this we get 
\[-1-2m < t < 1 \implies  -2m \leq t \leq 0 \implies \left| m + t  \right| \leq \left| m \right|.  \]

The case $m < 0$ proceeds in essentially the same way.
\end{proof}

\begin{lemma}\label{lem:varphi}
Let $e = e_{i,j,n} \in E_A^1$ and $m \in \mathbb{Z}$ be given. Then we have
\begin{align*}
\left| \varphi(m,e) \right| & < \left| m \right| \quad  \text{when } \left| m \right| \geq 2 A_{i,j}, \\
\left| \varphi(m,e) \right| & \leq  \left| m \right| \quad  \text{when } \left| m \right| < 2 A_{i,j}.
\end{align*}
\end{lemma}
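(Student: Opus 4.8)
The plan is to reduce the statement directly to the arithmetic Lemma~\ref{lem:inequalities} by finding the correct substitution. Writing $a \coloneqq A_{i,j}$ and $b \coloneqq B_{i,j}$, I first note that since the edge $e_{i,j,n}$ exists we have $a \geq 1$ and $0 \leq n < a$. The standing hypothesis $\left| B_{i,j} \right| < A_{i,j}$ translates to $\left| b \right| < a$, and since $b$ is an integer this gives $1 - a \leq b \leq a - 1$, hence
\[1 - 2a \leq b - a \leq -1.\]
This is precisely the second pair of inequalities required in the hypothesis of Lemma~\ref{lem:inequalities}.

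Next I would unwind the definition of $\varphi(m,e) = q$, where $q$ is the quotient determined by the division algorithm $m b + n = q a + r$ with $0 \leq r < a$. The key step is to set $t \coloneqq q - m$, so that $q = m + t$ and $\left| \varphi(m,e) \right| = \left| m + t \right|$. Rearranging the division identity, $a q = m b + n - r$, and therefore
\[a t = a q - a m = (b-a) m + (n - r).\]
Since both $n$ and $r$ lie in $\{0, 1, \ldots, a-1\}$, we have $-a < n - r < a$, which yields
\[(b-a) m - a < a t < (b-a) m + a,\]
exactly the first pair of inequalities in the hypothesis of Lemma~\ref{lem:inequalities}.

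With all hypotheses verified, applying Lemma~\ref{lem:inequalities} gives $\left| m + t \right| < \left| m \right|$ when $\left| m \right| \geq 2a$ and $\left| m + t \right| \leq \left| m \right|$ when $\left| m \right| < 2a$. Substituting back $m + t = q = \varphi(m,e)$ and $a = A_{i,j}$ produces the claimed inequalities. I do not anticipate any genuine obstacle here: the whole content lies in spotting the substitution $t = q - m$ and performing the elementary computation of $a t$. The only point that warrants a little care is the bookkeeping on the ranges of $n$ and $r$ to get the strict versus non-strict bounds right, but both follow immediately from the edge enumeration and the division algorithm.
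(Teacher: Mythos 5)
Your proof is correct and follows essentially the same route as the paper: both write the quotient as $q = m + t$, derive $A_{i,j}\, t = (B_{i,j} - A_{i,j})\, m + (n - r)$ with $-A_{i,j} < n - r < A_{i,j}$, and invoke Lemma~\ref{lem:inequalities}. No gaps.
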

\begin{proof}
We have that $A_{i,j} \geq 1$ and that $0 \leq n < A_{i,j}$. Let $t$ and $r$ be the unique integers satisfying 
\[m B_{i,j} + n = (m+t) A_{i,j} + r \quad  \text{and} \quad  0 \leq r < A_{i,j}.  \]
Recall that then $\varphi(m,e) = m+t$. We now have 
\[ \left(B_{i,j} - A_{i,j}  \right) m  + n - r = A_{i,j} t  \] 
where $ - A_{i,j} < n-r <  A_{i,j}$. From this we see that 
\[ \left(B_{i,j} - A_{i,j}  \right) m - A_{i,j}  <  A_{i,j} t  < \left(B_{i,j} - A_{i,j}  \right) m + A_{i,j}. \]
We also have 
\[  1 - 2 A_{i,j} \leq B_{i,j} - A_{i,j} \leq -1, \]
since $\left| B_{i,j} \right| < A_{i,j}$. We are now in the setting of Lemma~\ref{lem:inequalities} and so the result follows. 
\end{proof}

\begin{proposition}\label{prop:contracting}
The self-similar graph $(\ZZ, E_A, \varphi)$ associated to the matrices $A$ and $B$ is contracting. 
\end{proposition}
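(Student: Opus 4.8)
The plan is to use the finite set $\mathcal{N} = \{-2M+1, \ldots, 2M-1\} \subset \ZZ$ as the contracting core, where $M \coloneqq \max_{i,j} A_{i,j}$ is the largest entry of $A$ (finite since $A$ is a finite matrix). I would then show that for any group element $m \in \ZZ$, applying the cocycle $\varphi$ along a sufficiently long path eventually lands inside $\mathcal{N}$ and stays there. The main engine is Lemma~\ref{lem:varphi}, which says that feeding $m$ through a single edge strictly decreases $|m|$ whenever $|m| \geq 2A_{i,j}$, and never increases $|m|$ otherwise.

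First I would observe that since $A_{i,j} \leq M$ for all $i,j$, the bound $|m| \geq 2M$ guarantees $|m| \geq 2A_{i,j}$ for every edge $e_{i,j,n}$, so Lemma~\ref{lem:varphi} gives $|\varphi(m,e)| < |m|$ for \emph{every} choice of edge $e$. Thus, as long as the running value has absolute value at least $2M$, each further edge strictly decreases it. Since these are integers, after at most $|m|$ steps the value must drop below $2M$. More precisely, I would argue that for any path $\mu = e_1 e_2 \cdots e_k$, the sequence $|\varphi(m, \mu\vert_1)|, |\varphi(m,\mu\vert_2)|, \ldots$ is non-increasing, and strictly decreasing at every step where the current value is still $\geq 2M$. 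Hence once it falls below $2M$ it can never climb back to $2M$ or above, because the $|m| < 2a$ clause of Lemma~\ref{lem:varphi} only permits $|\varphi(m,e)| \leq |m|$. This means that after finitely many edges (bounded in terms of $|m|$) the value is trapped in $\mathcal{N}$.

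Concretely, I would set $n \coloneqq |m|$ (or any integer exceeding $|m|$) and claim that $\varphi(m,\mu) \in \mathcal{N}$ for all $\mu \in E_A^{\geq n}$. To see this, recall from Equation~\eqref{eq:iteratephi} that $\varphi(m,\mu)$ is computed by iterating $\varphi$ edge by edge along $\mu$. If at some initial segment the value is already below $2M$, we are done by the monotonicity just described. Otherwise the value stays $\geq 2M$ and strictly decreases at each of the first several steps; since it starts at $|m| \leq n$ and decreases by at least $1$ each step while above $2M$, it must drop below $2M$ within at most $|m| \leq n$ edges. Either way, after traversing $n$ edges the value lies in $\mathcal{N}$, and it remains there along any further edges by monotonicity.

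The only genuine subtlety—and the step I would be most careful with—is the direction of the indexing and making sure the monotonicity argument correctly handles that $\varphi(m,\mu)$ is built by successively applying $\varphi(\,\cdot\,, e_j)$ to the previously computed value, as recorded in Equation~\eqref{eq:iteratephi}, rather than applying $\varphi(m, \cdot)$ independently to each edge. Because Lemma~\ref{lem:varphi} controls a single application $\varphi(m', e)$ for an arbitrary intermediate integer $m'$, and the threshold $2M$ dominates every edge-specific threshold $2A_{i,j}$ uniformly, this iteration goes through cleanly: at each step the relevant hypothesis is simply whether the current integer has absolute value $\geq 2M$, which is exactly what the two cases of Lemma~\ref{lem:varphi} dichotomize. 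I do not expect any serious obstacle beyond bookkeeping; the finiteness of $M$ (from $N < \infty$) and the uniform threshold are what make the single set $\mathcal{N}$ work for all $g = m$ simultaneously.
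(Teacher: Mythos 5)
Your proposal is correct and follows essentially the same route as the paper: both arguments combine Lemma~\ref{lem:varphi} with the edge-by-edge iteration in Equation~\eqref{eq:iteratephi}, take $n = |m|$, and use the uniform threshold $2\max_{i,j} A_{i,j}$ to define the finite core $\mathcal{N}$ (the paper takes $\mathcal{N} = [-R,R]\cap\ZZ$ with $R = 2\max_{i,j}A_{i,j}$, differing from yours only in whether the endpoints are included). Your write-up in fact spells out the monotonicity/trapping bookkeeping that the paper's one-line proof leaves implicit.
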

\begin{proof}
Define $R \coloneqq 2 \cdot \max \{ A_{i,j} \mid 1 \leq i,j \leq N \}$. Let $m \in \ZZ$ be given. Combining Lemma~\ref{lem:varphi} with Equation~\eqref{eq:iteratephi} we see that $\left| \varphi(m,\mu) \right| \leq R$ whenever $\mu \in E_A^{\geq \left| m \right|}$. So by setting $\mathcal{N} = \left[ -R, R \right] \cap \ZZ$ and $n = \left| m \right|$ in Definition~\ref{def:contracting} we find that $(\ZZ, E_A, \varphi)$ is contracting. 
\end{proof}

Before establishing the finite generation of $\left\llbracket \mathcal{G}_{A,B} \right\rrbracket$ we introduce some notation. Given a finite path $\gamma \in E_A^*$ and $m \in \mathbb{Z}$ we denote the full bisection $Z \left( \gamma, m, \gamma \right) \sqcup \left( \mathcal{G}_{A,B}^{(0)} \setminus Z(\gamma) \right)$ by $U_{\gamma, m}$. Given two disjoint paths $\mu, \gamma \in E_A^*$ with $r(\mu) = r(\gamma)$ we define the transposition~${\tau_{\mu, \gamma} \coloneqq \pi_{\widehat{V}} \in \left\llbracket \mathcal{G}_{A,0} \right\rrbracket}$, where $V = Z \left( \mu, 0, \gamma \right)$. Observe that we have 
\[\tau_{\mu, \gamma} \circ \pi_{U_{\gamma, m}} \circ \tau_{\mu, \gamma}  = \pi_{U_{\mu, m}}. \]

\begin{theorem}\label{thm:finGen}
Let $A,B$ be matrices satisfying the AH~criteria with $A$ irreducible. Assume that $\left| B_{i,j} \right| < A_{i,j}$ whenever $A_{i,j} \neq 0$. Then the topological full group $\left\llbracket \mathcal{G}_{A,B} \right\rrbracket$ is finitely generated.
\end{theorem}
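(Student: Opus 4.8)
The plan is to exhibit an explicit finite generating set for $\left\llbracket \mathcal{G}_{A,B} \right\rrbracket$, building on the decomposition $\left\llbracket \mathcal{G}_{A,B} \right\rrbracket = \left\llbracket \mathcal{H}_{A,B} \right\rrbracket \left\llbracket \mathcal{G}_{A,0} \right\rrbracket$ established in the introduction. Since $\left\llbracket \mathcal{G}_{A,0} \right\rrbracket$ is already finitely generated by~\cite{MatTFG}, it suffices to produce finitely many elements of $\left\llbracket \mathcal{G}_{A,B} \right\rrbracket$ that, together with a finite generating set of $\left\llbracket \mathcal{G}_{A,0} \right\rrbracket$, generate the whole group. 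The key point will be that an arbitrary $\pi_{U_\mathcal{H}} \in \left\llbracket \mathcal{H}_{A,B} \right\rrbracket$ can be rewritten in terms of the ``small-$m$'' bisections $U_{\gamma, m}$ with $|m| \leq R$ (where $R$ is the contracting constant from Proposition~\ref{prop:contracting}), conjugated by the transpositions $\tau_{\mu, \gamma} \in \left\llbracket \mathcal{G}_{A,0} \right\rrbracket$.

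\medskip

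I would proceed as follows. First, recall from the proof of Lemma~\ref{lem:HnTR} that any full bisection $U \subseteq \mathcal{G}_{A,B}$ factors as $U_\mathcal{H} \cdot U_A$, so $\pi_U = \pi_{U_\mathcal{H}} \pi_{U_A}$ with $\pi_{U_A} \in \left\llbracket \mathcal{G}_{A,0} \right\rrbracket$; thus the task reduces to generating $\left\llbracket \mathcal{H}_{A,B} \right\rrbracket$ modulo $\left\llbracket \mathcal{G}_{A,0} \right\rrbracket$. Writing $U_\mathcal{H} = \bigsqcup_{i=1}^k Z(\mu_i, m_i, \mu_i) \sqcup A_0$, I observe that each single ``diagonal'' piece $\pi_{U_{\mu_i, m_i}}$ (with all other coordinates left as identity) can be multiplied together, using the transpositions $\tau_{\mu, \gamma}$ to shuffle supports, to recover $\pi_{U_\mathcal{H}}$; so it is enough to generate all the $\pi_{U_{\gamma, m}}$ (for $\gamma \in E_A^*$, $m \in \ZZ$) together with the $\tau_{\mu,\gamma}$. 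The latter transpositions lie in $\left\llbracket \mathcal{G}_{A,0} \right\rrbracket$ and hence are already covered by its finite generating set, so the crux is the $\pi_{U_{\gamma,m}}$.

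\medskip

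The \textbf{heart of the argument} is to reduce $\pi_{U_{\gamma,m}}$ to finitely many generators using contraction. For $m$ with $|m| \leq R$ and $\gamma = r(\gamma)$ a single vertex, there are only finitely many such bisections $U_{v, m}$, and these I would take into the generating set. To handle a general $\pi_{U_{\gamma, m}}$ with $|\gamma| \geq 1$: using the decomposition~\eqref{eq:decompose} of a basic bisection into pieces indexed by an appended edge, I would express $\pi_{U_{\gamma, m}}$ as a product of $\pi_{U_{\gamma e, \varphi(m,e)}}$-type terms conjugated by $\mathcal{G}_{A,0}$-transpositions. Since Proposition~\ref{prop:contracting} guarantees that after finitely many edge-extensions the cocycle values $\varphi(m, \mu)$ all lie in $\mathcal{N} = [-R, R] \cap \ZZ$, iterating this rewriting terminates with factors of the form $\pi_{U_{\eta, m'}}$ where $|m'| \leq R$; these in turn reduce (via~\eqref{eq:decompose} run the other direction, absorbing the prefix $\eta$ by $\mathcal{G}_{A,0}$-transpositions) to the finitely many vertex-based generators $\pi_{U_{v, m'}}$ with $|m'| \leq R$.

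\medskip

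\textbf{The main obstacle} I anticipate is bookkeeping the conjugating transpositions so that they genuinely stay inside the \emph{finitely generated} group $\left\llbracket \mathcal{G}_{A,0} \right\rrbracket$ rather than merely inside $\left\llbracket \mathcal{G}_{A,0} \right\rrbracket$ abstractly — but since $\left\llbracket \mathcal{G}_{A,0} \right\rrbracket$ is finitely generated as a whole, every such transposition is a word in its fixed finite generating set, so this is automatically fine. The genuinely delicate point is verifying that the contraction-driven rewriting of $\pi_{U_{\gamma,m}}$ into the finite reservoir is a \emph{finite} word in the chosen generators (i.e.\ that the recursion depth is bounded by $|m|$ uniformly, which is exactly what Proposition~\ref{prop:contracting} supplies via $n = |m|$) and that the partial products at each stage are themselves honest elements of $\left\llbracket \mathcal{G}_{A,B} \right\rrbracket$ with matching source and range. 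Assembling everything, the finite generating set is: a finite generating set of $\left\llbracket \mathcal{G}_{A,0} \right\rrbracket$, together with the finitely many elements $\{\pi_{U_{v, m}} \mid v \in E_A^0, \ |m| \leq R\}$.
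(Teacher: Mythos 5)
Your overall strategy is the same as the paper's: factor $\pi_U = \pi_{U_\mathcal{H}}\pi_{U_A}$, use Proposition~\ref{prop:contracting} to force all twisting integers into $[-R,R]$, and conjugate the remaining local pieces $\pi_{U_{\gamma,m}}$ by transpositions from $\left\llbracket \mathcal{G}_{A,0}\right\rrbracket$ into a finite reservoir. The genuine gap is in your choice of reservoir, the vertex-based elements $\pi_{U_{v,m}}$. The conjugation identity $\tau_{\mu,\gamma}\circ\pi_{U_{\gamma,m}}\circ\tau_{\mu,\gamma}=\pi_{U_{\mu,m}}$ is only available when $Z(\mu)\cap Z(\gamma)=\emptyset$, so ``absorbing the prefix $\gamma$ into the vertex $v=r(\gamma)$'' requires $Z(\gamma)\cap Z(v)=\emptyset$, i.e.\ $s(\gamma)\neq v$. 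This fails whenever $s(\gamma)=r(\gamma)$; for $N\geq 2$ that case is repairable (first conjugate $Z(\gamma)$ onto $Z(\eta)$ for a path $\eta$ into $v$ starting at a \emph{different} vertex, which exists by irreducibility), but for $N=1$ it fails irreparably: there $Z(v)=E_A^\infty$ is the whole unit space, no transposition can have $Z(v)$ as source or range, and $\pi_{U_{v,m}}$ is just the globally acting $\kappa_m$. The one-vertex case is not exotic --- e.g.\ $A=(2)$, $B=(1)$ (the dyadic adding machine) satisfies every hypothesis of the theorem --- so it cannot be discarded. In that case, whether $\left\llbracket \mathcal{G}_{A,0}\right\rrbracket$ together with the global $\kappa_m$'s generates $\left\llbracket \mathcal{G}_{A,B}\right\rrbracket$ is a genuinely nontrivial question: the global element only gives you the \emph{product} $\sigma^{-1}\kappa_1=\prod_{e}\pi_{U_{e,\varphi(1,e)}}$ (where $\sigma\in\left\llbracket \mathcal{G}_{A,0}\right\rrbracket$ is the first-level prefix permutation), never a single isolated local piece. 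One can in fact show that your set still generates here, but the argument I know uses the surjectivity of the index map and Property~TR (Proposition~\ref{prop:PI} and Theorem~\ref{thm:TRKEP}), not the rewriting you propose.

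The paper avoids the problem by choosing the reservoir one level up: fix $n$ with $\left|E_A^n v\right|\geq 2$ for every vertex $v$ (possible since $A$ is irreducible and not a permutation matrix) and take $T=\left\{\pi_{U_{\gamma,m}} \mid \gamma\in E_A^n,\ \left|m\right|\leq R\right\}$. Then every $\mu$ with $\left|\mu\right|\geq n$ admits some $\gamma\in E_A^n r(\mu)$ with $Z(\gamma)\cap Z(\mu)=\emptyset$ (choose $\gamma\neq\mu\vert_n$), so the single conjugation $\pi_{U_{\mu,m}}=\tau_{\mu,\gamma}\circ\pi_{U_{\gamma,m}}\circ\tau_{\mu,\gamma}$ always applies, uniformly in $N$, including $N=1$. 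Since your rewriting via~\eqref{eq:decompose} only ever lengthens paths, the requirement $\left|\mu\right|\geq n$ costs nothing; replacing your vertex-based generators by this path-based set turns your argument into the paper's proof.
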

\begin{proof}
First pick $n \in \mathbb{N}$ large enough so that $E_A^n v \geq 2$ for each $v \in E_A^0$. As above, set~${R \coloneqq 2 \cdot \max \{ A_{i,j} \mid 1 \leq i,j \leq N \}}$.  Let $S$ be a finite generating set for $\left\llbracket \mathcal{G}_{A,0} \right\rrbracket$ (\cite[Theorem~6.21]{MatTFG}) and define the finite set
\[T \coloneqq \left\{ \pi_{U_{\gamma, m}} \mid \gamma \in E_A^n \ \& \ -R \leq m \leq R \right\}.\]
We claim that $S \cup T$ generates $\left\llbracket \mathcal{G}_{A,B} \right\rrbracket$.

To prove the claim let $\pi_U \in \mathcal{G}_{A,B}$ be given.  Write $U = \sqcup_{i=1}^k Z(\mu_i, m_i, \nu_i)$. 
By applying the splitting in Equation~\eqref{eq:decompose} enough times to each basic bisection in~$U$ we may  assume without loss of generality that $\left| \mu_i \right| \geq n$ for each $i$. Similarly, by Proposition~\ref{prop:contracting} we may assume that $\left| m_i \right| \leq R$. As we have done a few times already we split the full bisection $U$ into the two full bisections
\[ U_\mathcal{H} \coloneqq \sqcup_{i=1}^k Z(\mu_i, m_i, \mu_i)    \quad \text{ and } \quad U_A \coloneqq \sqcup_{i=1}^k Z(\mu_i, 0, \nu_i),   \]
making $\pi_U = \pi_{U_\mathcal{H}} \pi_{U_A}$. Since $\pi_{U_A} \in \left\llbracket \mathcal{G}_{A,0} \right\rrbracket$ and $\pi_{U_\mathcal{H}} = \Pi_{i=1}^k \pi_{U_{\mu_i, m_i}}$ it suffices to consider each $\pi_{U_{\mu_i, m_i}}$. By the assumption on $n$ we can for each $i$ find a path $\gamma_i \in E_A^n r(\mu_i)$ which is disjoint from $\mu_i$. And then the equation
\[\pi_{U_{\mu_i, m_i}} = \tau_{\mu_i, \gamma_i} \circ \pi_{U_{\gamma_i, m_i}} \circ \tau_{\mu_i, \gamma_i} \]
proves the claim, since $\tau_{\mu_i, \gamma_i} \in \left\llbracket \mathcal{G}_{A,0} \right\rrbracket$ and $\pi_{U_{\gamma_i, m_i}} \in T$.
\end{proof}



\newcommand{\etalchar}[1]{$^{#1}$}

\end{document}